\newtheorem{proposition}{Proposition}
\newtheorem{corollary}{Corollary}
\newtheorem{theorem}{Theorem}
\journal{arXiv}
\begin{document}
\begin{frontmatter}

\title{Revisiting Continuous $p$-Hub Location Problems with the $\ell^1$ Metric}
\author[add1]{Yifan Wu}

\author[add1]{Joseph Geunes}
\author[add2,add1]{Xiaofeng Nie\corref{cor1}}

\address[add1]{Wm Michael Barnes '64 Department of Industrial and Systems Engineering, Texas A\&M University, College Station, TX 77843, USA}
\address[add2]{Department of Engineering Technology and Industrial Distribution, Texas A\&M University, College Station, TX 77843, USA}

\cortext[cor1]{Corresponding author.\ead{xiaofengnie@tamu.edu}}
\begin{abstract}

Motivated by emerging urban applications in commercial, public sector, and humanitarian logistics, we revisit continuous $p$-hub location problems in which several facilities must be located in a continuous space such that the expected minimum Manhattan travel distance from a random service provider to a random customer through exactly one hub facility is minimized. In this paper, we begin by deriving closed-form results for a one-dimensional case and two-dimensional cases with up to two hubs. Subsequently, a simulation-based approximation method is proposed for more complex two-dimensional scenarios with more than two hubs. Moreover, an extended problem with multiple service providers is analyzed to reflect real-life service settings. Finally, we apply our model and approximation method using publicly available data as a case study to optimize the deployment of public-access automated external defibrillators in Virginia Beach.

\end{abstract}
\begin{keyword}
Continuous hub location problems\sep Simulation optimization  \sep  Automated external defibrillators
\end{keyword}
\end{frontmatter}

\section{Introduction}

With the advancement of GPS technology for real-time position tracking, a variety of innovative location-based application problems have emerged. These challenging problems are not only limited to commercial and public sector logistics but also extend to humanitarian logistics. In the commercial sector, companies such as Walmart and Amazon have combined crowd-sourced delivery with flexible warehouse stations located throughout metropolitan areas in order to strike a balance between cost and responsiveness (see, e.g., \cite{fatehi2022crowdsourcing}). Another example in the public sector deals with the optimal placement of automated external defibrillators (AEDs) in public areas to save lives after cardiac arrests. Additionally, in the realm of humanitarian logistics, a key question arises regarding the optimal pre-positioning of emergency medical supplies such as bandages, tourniquets, and antibiotics within urban areas to enhance preparedness for catastrophic disasters.

These examples share several common characteristics. First, both service providers (specific drivers, bystanders familiar with AED usage, and registered or spontaneous volunteers with some basic emergency medical training) and those in need of service (customers, cardiac arrest victims, and casualties) are randomly distributed throughout urban areas. Second, potential facility location sites may be virtually anywhere in the region that satisfies relevant size, convenience, or accessibility constraints. Third, the service provider first needs to retrieve some physical good(s) from a facility before proceeding to serve the customer. Given the time-sensitive nature of many of these applications, the service provider will evaluate all possible travel paths from his/her position to the customer’s position via each facility and select the one with the shortest total distance.  Fourth, in urban settings, the $\ell^1$ metric, that is, the Manhattan metric, is suitable for measuring distance. 

In the above-mentioned urban logistics applications, several facilities must be located in a continuous space such that the expected travel distance is minimized. Since we can treat each facility as a hub, such applications motivate us to revisit continuous $p$-hub location problems.  Unlike traditional hub location problems (e.g., \citet{campbell1996hub}) that assume economic benefits for transportation between hubs, these urban logistics applications typically require only one hub for pickup of an item. This gives rise to the ``1-stop $p$-hub problem" as termed by \citet{sasaki1999selection}. Additionally, in these applications, origin-destination demand pairs (service providers and customers) are not restricted to pre-specified locations but are present throughout a continuous service region with uncertainty, rendering the problem distinct. Thus, the the problem class we consider falls into a special case of continuous $p$-hub location problems with the $\ell^1$ metric, but with random origin-destination demand pair locations.

Despite the increasing prevalence of urban logistics applications,  relatively little academic research has been dedicated to the related location problems that arise. Therefore, this paper aims to revisit continuous $p$-hub location problems under the $\ell^1$ metric, where several facilities are to be located in a continuous space such that the expected minimum Manhattan travel distance from a random service provider to a random customer through exactly one hub facility is minimized.

This paper first undertakes a comprehensive exploration of the problem in one and two dimensions. Starting from a one-dimensional setting, that is, a line, the objective function is derived through an integration method, based on properties that specify the conditions under which a specific hub would be utilized. As a result, the convexity of the one-dimensional problem is shown, and the corresponding closed-form solution for the optimal locations is obtained. In a two-dimensional setting, we extend the integration method into the plane for up to two hubs. Some geometrical observations determine the boundaries of the integration areas for the two-hub case. Therefore, we are able to obtain an analytical expression for the objective function. 

When more than two hubs are considered, deriving an objective function analytically is intractable because the required number of integration areas grows exponentially. Therefore, we propose a two-step approximation method based on simulation to tackle the general case.  In the first step, using Benders decomposition, we solve a large-scale $p$-median problem that corresponds to a discrete version of our problem to obtain a high-quality initial solution. Then, a simulation optimization method is applied to obtain an improved continuous solution in the plane. Computational experiments show the performance stability and superiority of our algorithm. We then consider an extended version of the problem that accounts for multiple service providers in order to more accurately reflect real-life service situations. We show that the extended problem can also be reformulated as a $p$-median problem using discretization, which enables finding a solution directly using our approximation method.

As a case study, we implement our model and approximation method on a problem involving the deployment of public-access AEDs using smartphone dispatch applications (apps). In recent years, various mobile apps, including PulsePoint in the United States \citep{pulsepoint}, Heartrunner in Sweden \citep{HeartRunner}, and Staying Alive in France \citep{StayingAlive}, have been developed to facilitate AED delivery prior to the arrival of Emergency Medical Services. Upon receiving notification of an out-of-hospital cardiac arrest (OHCA), these apps alert volunteer responders and identify the closest AED for them to retrieve and deliver to the victim. Numerous studies (e.g., \citet{berglund2018smartphone}, \citet{hatakeyama2018smartphone}, and \citet{smida2022pulsepoint}) have demonstrated the importance of these smartphone dispatch apps in reducing delivery times and enhancing survival rates. Although several studies (e.g., \citet{chan2016optimizing} and \citet{chan2018robust}) have considered the placement of AEDs in public areas using optimization-based methods, to our knowledge, none have considered their use in conjunction with smartphone dispatch apps that identify randomly dispersed service providers. With the improvement and popularization of these apps, we believe that dispatch apps will bring about significant changes in OHCA rescue operations in the future, rendering our case study necessary. Our experimental results show some critical insights into the benefits and tradeoffs associated with volunteer and AED numbers. Moreover, we perform a sensitivity analysis that demonstrates that our approximation method is robust, enhancing the practical applicability of our implementation.

The remainder of this paper is organized as follows. We first review  related literature in Section \ref{sec:Literature Review}. Section \ref{pd} describes the problem setting and provides a generic mathematical formulation of the problem. Sections \ref{onedimension} and \ref{twodimension} present closed-form results for a one-dimensional case and two-dimensional one- and two-hub cases, respectively. We propose our simulation-based approximation method to tackle two-dimensional cases with more than two hubs in Section \ref{sec:Simulation approximation}, and analyze the extended problem with multiple service providers in Section \ref{sec:multi_service}. Section \ref{sec:Case Study} presents the case study on AED deployment in Virginia Beach with realistic OHCA data. Section \ref{sec:Conclusion} concludes the paper and discusses future research directions.

\section{Literature Review}\label{sec:Literature Review}

The early history of continuous hub location problems dates back to the 1950s when \citet{miehle1958link} investigated how to interconnect a group of fixed points scattered in a plane by constructing a network of links with a given number of junctions (whose continuous locations will be decided), such that the sum of all link lengths using the Euclidean metric (that is, the $\ell^2$ metric) is minimal. Motivated by real-world examples from airline and express package delivery companies, \citet{o1986location} considers both continuous single- and two-hub location problems with the Euclidean metric where flows between city pairs are given. The continuous single-hub location problem case becomes a classical Weber problem. For the two-hub problem, a numerical solution procedure is provided to find two hub locations under a non-overlapping partition assumption.  \citet{o1991solution} formulate a minimax hub location problem where a single hub is located on the plane, such that the most expensive flow via the hub among all city pairs is minimized. \citet{o1992clustering} proposes a clustering approach for a planar $p$-hub location problem where the squared distance is utilized. \citet{aykin1992interacting} investigate continuous $p$-hub location problems on a plane and a sphere with the Euclidean and geodesic distances, respectively, and propose an effective algorithm that iteratively determines location and allocation decisions.

Instead of assuming a given set of origins and destinations, \citet{campbell1990freight} assumes that origins and destinations are uniformly distributed over a line segment, and studies the problem of locating freight consolidation terminals under three different routing schemes, i.e., nearest terminal, minimum distance, and minimum cost (where the nearest terminal approach requires choosing the closest terminal to the origin and closest to the destination, although these choices may not minimize distance traveled or total shipment cost). The average transportation cost under each scheme is derived analytically, and a closed-form solution for the optimal terminal locations is provided. Furthermore, the results obtained from the one-dimensional problems can be applied to the two-dimensional settings with the $\ell^1$  metric where terminals are assumed to be arranged in a rectangular lattice. \citet{campbell1990locating} investigates the terminal location problem under the Euclidean metric using an illustrative example with fifty discrete demand locations randomly scattered in a plane. 


The one-dimensional version of our problem is a special case of the problem class considered in \citet{campbell1990freight}. Interestingly, we arrive at the same closed-form optimal hub locations for the one-dimensional case, albeit through a different derivation process. To elaborate, Campbell derives the objective function analytically using the sizes of all intervals (i.e., between the left boundary and the first hub location, between hub locations, and between the last hub location and the right boundary) as decision variables. In contrast, we directly utilize the locations of hubs as our decision variables. Additionally, Campbell explores two-dimensional cases assuming that hub locations are arranged in a rectangular lattice. This assumption allows for the independent application of one-dimensional solutions to both horizontal and vertical cases. In comparison, we relax this assumption for the two-dimensional cases.

To the best of our knowledge, only the one-dimensional case of our problem has been addressed in the previous literature (i.e., \citet{campbell1990freight}). Moreover, no prior studies have explored the two-dimensional case without imposing additional strong assumptions that our model and solution methods do not require. Therefore, our contributions are twofold: 1) We propose a new analytical approach for deriving the objective function for this problem; and 2) we extend the analysis to the two-dimensional case, broadening the scope of existing research.

\section{Problem Description}
\label{pd}

We introduce some basic settings and provide a description of our problem. Our focus is on a continuous urban region over which a service provider and a customer are randomly distributed, and where the Manhattan distance metric is applicable. Rather than a direct route from the service provider's location to the customer's, the service provider is required to first visit a facility to gather necessary item(s) to provide service before proceeding to the customer's location. In cases where there are multiple facility options, the provider will select the facility that minimizes the total distance traveled (including both the distance from the provider to the facility and from the facility to the customer). We wish to locate $n$ facilities within the region to minimize the expected minimum travel distance from a random service provider location to a random customer location.

Let $\mathfrak{X}$ denote the location of a random customer and $\mathfrak{Y}$ denote the location of a random service provider. Let $\mathfrak{p}_1, \mathfrak{p}_2, \ldots, \mathfrak{p}_n$ be $n$ facility locations, which are to be determined. The corresponding generic optimization model is formulated as follows:
\begin{equation}
\label{obj}
\min_{\mathfrak{p}_1,\ldots,\mathfrak{p}_n}E\left[\min_{i \in[n]}\Big\{\|\mathfrak{X}-\mathfrak{p}_i\|_{1}+\|\mathfrak{Y}-\mathfrak{p}_i\|_{1}\Big\}\right],
\end{equation}
where $[n]= \{1,2,\ldots, n\}$ and $\| \cdot \|_{1}$ denotes the $\ell^1$ distance metric.  
In the following three sections, we will focus on one- and two-dimensional versions of the problem, respectively.

\section{One-Dimensional Problem} 
\label{onedimension}

For the one-dimensional case, without loss of generality, we focus on a line segment with length 1. Let $X$ be the location of a random customer and $Y$ be the location of a random service provider. We assume that $X$ and $Y$ are independent and both follow a uniform distribution on $[0,1]$. Let $0 \le p_1 < p_2 < \cdots < p_n \le 1$ be the locations of $n$ facilities, which are to be determined. The corresponding objective function becomes
\begin{equation*}
 \int_{0}^{1} \int_{0}^{1} \min_{i \in [n]}\Big\{|x-p_i|+|y-p_i|\Big\} \,dy \,dx.
\end{equation*}

In the following subsections, we first introduce some propositions that facilitate the characterization of this objective function. Then, we outline a procedure for deriving an algebraic expression for the objective function solely in terms of the facility locations. Finally, we prove the convexity of the objective function and derive a closed-form solution for the optimal locations.

\subsection{Properties}

If we define $d_{i}(x,y)$ as $|x-p_i|+|y-p_i|$ for $i = 1,2,\ldots,n$ (with a slight abuse of notation, we use $d_i$ for short), the objective function becomes  $$\int_{0}^{1} \int_{0}^{1} \min_{i \in [n]} d_{i} \,dy \,dx.$$
In order to derive an analytical expression for the objective function, for each given $x$, we need to divide the whole range of $y$ into non-overlapping intervals (except for some boundary points) such that, within each interval, a single location with $\min_{i \in [n]} d_{i}$ can be identified. We present several propositions and corollaries to characterize these intervals, the proofs of which can be found in Appendix \ref{Appendix:Proposition}.

\begin{proposition}
\label{p1}
For any $k \in [n-1]$, if $0 \le x \le p_k$ and $0 \le y \le 1$, we have $d_k \le d_{k+i}, \forall i \in [n-k]$.  
\end{proposition}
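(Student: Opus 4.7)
The plan is to bound the difference $d_{k+i}-d_k$ from below by splitting it into an $x$-part and a $y$-part, and handling each separately. The $x$-part is easy because of the hypothesis $x\le p_k<p_{k+i}$; the $y$-part requires only the reverse triangle inequality, since $y$ is unconstrained in $[0,1]$ and may lie on either side of the two hubs.

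Concretely, I would first observe that under the hypothesis $x\le p_k<p_{k+i}$, we have $|x-p_k|=p_k-x$ and $|x-p_{k+i}|=p_{k+i}-x$, so
\[
|x-p_{k+i}|-|x-p_k| \;=\; p_{k+i}-p_k \;\ge\; 0.
\]
Next, for the $y$-coordinate, I would invoke the reverse triangle inequality on the real line applied to the points $y$, $p_k$, $p_{k+i}$:
\[
|y-p_k|-|y-p_{k+i}| \;\le\; \bigl|p_{k+i}-p_k\bigr| \;=\; p_{k+i}-p_k.
\]
Adding these two inequalities gives
\[
d_k - d_{k+i} \;=\; \bigl(|x-p_k|-|x-p_{k+i}|\bigr) + \bigl(|y-p_k|-|y-p_{k+i}|\bigr) \;\le\; -(p_{k+i}-p_k) + (p_{k+i}-p_k) \;=\; 0,
\]
which is exactly the desired inequality $d_k \le d_{k+i}$. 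Since this argument does not depend on $k$ or $i$ beyond requiring $1\le k\le n-1$ and $1\le i\le n-k$, it establishes the proposition for all such pairs simultaneously.

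The ``main obstacle'' is essentially absent; the only subtle point is recognizing that the $y$-coordinate cannot be controlled directly (it may lie to the left of $p_k$, between $p_k$ and $p_{k+i}$, or to the right of $p_{k+i}$), so a case analysis on $y$ would be cumbersome. Invoking the reverse triangle inequality bypasses this entirely and makes the argument a two-line calculation. I would write the proof in exactly this format, rather than breaking out cases for $y$, to keep the presentation clean.
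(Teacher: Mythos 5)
Your proof is correct and follows the same decomposition as the paper's: both reduce the claim to showing $|y-p_k|-|y-p_{k+i}| \le p_{k+i}-p_k$ after computing the $x$-part exactly from $x \le p_k < p_{k+i}$. The only difference is that the paper verifies the $y$-inequality by a three-case analysis on the position of $y$, whereas you dispatch it in one line with the reverse triangle inequality, which is a clean simplification of the same argument.
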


\begin{corollary}
\label{cor1}
    If $ 0 \le x \le p_1$ and $0 \le y \le 1$, we have $
    d_1  = \min_{i \in [n]} d_{i}. $ 
\end{corollary}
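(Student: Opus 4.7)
The plan is to apply Proposition~\ref{p1} directly with $k=1$. Setting $k=1$ in Proposition~\ref{p1}, the hypothesis $0 \le x \le p_k$ becomes exactly $0 \le x \le p_1$, which is the hypothesis of the corollary. The conclusion of Proposition~\ref{p1} then reads $d_1 \le d_{1+i}$ for every $i \in [n-1]$, i.e., $d_1 \le d_j$ for all $j \in \{2,3,\ldots,n\}$.

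Combined with the trivial inequality $d_1 \le d_1$, this gives $d_1 \le d_j$ for every $j \in [n]$, which means $d_1$ attains the minimum, so $d_1 = \min_{i \in [n]} d_i$. Note that the hypothesis $p_1 \ge 0$ (together with $n \ge 2$ needed for Proposition~\ref{p1} to have content) ensures Proposition~\ref{p1} is applicable; if $n=1$, the statement $d_1 = \min_{i\in[n]} d_i$ is immediate and requires no argument.

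There is no real obstacle here: the corollary is essentially a relabeling of the $k=1$ instance of Proposition~\ref{p1}. The only thing to verify explicitly is that the index range $[n-k]$ with $k=1$ covers all remaining facilities $2,\ldots,n$, which it does.
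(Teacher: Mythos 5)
Your proposal is correct and follows exactly the paper's own argument: set $k=1$ in Proposition~\ref{p1} to obtain $d_1 \le d_{1+i}$ for all $i \in [n-1]$, hence $d_1 = \min_{i \in [n]} d_i$. The extra remarks about the trivial $n=1$ case and the index-range check are fine but not needed beyond what the paper states.
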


\begin{proposition}
\label{p2}
For any $k \in \{2,\ldots,n\}$, if $p_k \le x \le 1$ and $0 \le y \le 1$, we have $d_k \le d_{k-i},\forall i \in [k-1].$ 
\end{proposition}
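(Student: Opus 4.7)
The plan is to mirror the strategy used for Proposition \ref{p1} and proceed directly, although one can also invoke a symmetry argument. Fix $k \in \{2,\ldots,n\}$ and $i \in [k-1]$, and write $j = k-i$ so that $p_j < p_k$ by the ordering of the facilities. The statement to establish is $d_k \le d_j$ on the region $p_k \le x \le 1$, $0 \le y \le 1$.

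The first step handles the $x$-terms. Because $x \ge p_k > p_j$, both absolute values collapse: $|x - p_k| = x - p_k$ and $|x - p_j| = x - p_j$. Hence
\begin{equation*}
|x - p_k| - |x - p_j| = -(p_k - p_j).
\end{equation*}
This contribution favors $d_k$ by a definite margin of $p_k - p_j > 0$.

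The second step bounds the $y$-contribution in the opposite direction. Here $y$ can sit anywhere in $[0,1]$, so I cannot simplify $|y - p_j|$ or $|y - p_k|$ directly. Instead I would apply the reverse triangle inequality
\begin{equation*}
\bigl| |y - p_k| - |y - p_j| \bigr| \le |p_k - p_j| = p_k - p_j,
\end{equation*}
which in particular yields $|y - p_k| - |y - p_j| \le p_k - p_j$. Adding this to the $x$-difference from the previous step gives $d_k - d_j \le 0$, which is exactly the desired inequality.

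The only subtle point is the $y$-part, since $y$ is unrestricted and the sign of $|y - p_k| - |y - p_j|$ depends on where $y$ lies relative to the midpoint of $p_j$ and $p_k$; I expect this to be the main (though still very mild) obstacle, and invoking the reverse triangle inequality is the cleanest way to dispatch it without case analysis. As an alternative path, one could observe that the map $x \mapsto 1-x$, $y \mapsto 1-y$, $p_i \mapsto 1 - p_{n+1-i}$ preserves the problem setup and converts Proposition \ref{p2} into Proposition \ref{p1}, so the result follows by symmetry; I would likely record this remark but present the direct two-line argument above as the proof.
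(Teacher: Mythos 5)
Your proof is correct and follows essentially the same route as the paper's: both collapse the $x$-terms using $x \ge p_k$ and reduce the claim to the bound $|y-p_k|-|y-p_{k-i}| \le p_k-p_{k-i}$. The only difference is that the paper verifies this bound by a three-case analysis on the position of $y$, whereas you dispatch it in one line with the reverse triangle inequality --- a minor but clean simplification.
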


\begin{corollary}
\label{cor2}
If $p_n \le x \le 1$ and $0 \le y \le 1,$ we have
$d_n = \min_{i \in [n]} d_i.$
\end{corollary}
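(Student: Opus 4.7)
The plan is to derive Corollary~\ref{cor2} as an immediate consequence of Proposition~\ref{p2}, which has just been stated. The corollary differs from the proposition only in that it makes a claim about the global minimum over all hubs (and not merely a comparison with the hubs indexed $k-1,k-2,\ldots,1$), so the only work is to recognize that taking $k=n$ in Proposition~\ref{p2} already quantifies the comparison over every index strictly smaller than $n$.

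First I would fix arbitrary $x$ and $y$ satisfying $p_n \le x \le 1$ and $0 \le y \le 1$. Applying Proposition~\ref{p2} with $k=n$ is legal because $n \in \{2,\ldots,n\}$ (the case $n=1$ is trivial, with only one hub, and I would dispatch it in one sentence). The proposition then yields $d_n \le d_{n-i}$ for every $i \in [n-1]$. As $i$ ranges over $\{1,2,\ldots,n-1\}$, the index $n-i$ ranges over $\{1,2,\ldots,n-1\}$, so this inequality can be rewritten as $d_n \le d_j$ for every $j \in [n-1]$. Combining this with the trivial inequality $d_n \le d_n$ gives $d_n \le d_i$ for all $i \in [n]$, and hence $d_n = \min_{i \in [n]} d_i$, which is exactly the claim.

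I do not anticipate any obstacle here: the proof is genuinely a one-line deduction from the previous proposition, parallel in every respect to the derivation of Corollary~\ref{cor1} from Proposition~\ref{p1}. The only subtlety worth flagging explicitly is the reindexing $j = n-i$, which turns the shifted list of inequalities given by the proposition into a statement about the full index set $[n-1]$.
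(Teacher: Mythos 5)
Your proof is correct and matches the paper's argument exactly: both set $k=n$ in Proposition~\ref{p2} to obtain $d_n \le d_{n-i}$ for all $i \in [n-1]$ and conclude immediately. The reindexing remark and the $n=1$ aside are harmless elaborations of the same one-line deduction.
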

The two preceding propositions and their corollaries imply that if the customer is to the left of (to the right of) location $p_1$ ($p_n$), then location 1 (location $n$) is the distance-minimizing facility for the service provider.

\begin{proposition}
\label{p3}
For any $k \in [n-1],$ if $p_{k} \le x \le p_{k+1}$ and $0 \le y \le p_{k}+p_{k+1}-x$, we have $d_k = \min_{i \in [n]} d_{i}$.
\end{proposition}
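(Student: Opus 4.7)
The plan is to reduce Proposition~\ref{p3} to a single two-point comparison by invoking Propositions~\ref{p1} and~\ref{p2}, and then verify that comparison with an explicit case analysis on the position of $y$ relative to the two adjacent facility locations $p_k$ and $p_{k+1}$.

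First, I would apply Proposition~\ref{p2} with index $k$ (valid because $x \ge p_k$) to obtain $d_k \le d_{k-j}$ for every $j \in [k-1]$, so $d_k$ dominates every lower-indexed facility. Next, I would apply Proposition~\ref{p1} with index $k+1$ (valid because $x \le p_{k+1}$) to obtain $d_{k+1} \le d_{k+1+j}$ for every $j \in [n-k-1]$, so $d_{k+1}$ dominates every higher-indexed facility. Combining these bounds, $\min_{i \in [n]} d_i = \min\{d_k, d_{k+1}\}$, and the proposition reduces to proving $d_k \le d_{k+1}$ throughout the region $0 \le y \le p_k + p_{k+1} - x$.

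For the remaining inequality, using $p_k \le x \le p_{k+1}$ I would write $d_k = (x - p_k) + |y - p_k|$ and $d_{k+1} = (p_{k+1} - x) + |y - p_{k+1}|$ and split into three subcases on $y$. If $y \le p_k$, the absolute values resolve to give $d_k - d_{k+1} = 2(x - p_{k+1}) \le 0$, which holds throughout this subregion and needs no extra hypothesis. If $p_k \le y \le p_{k+1}$, the resolution yields $d_k - d_{k+1} = 2(x + y - p_k - p_{k+1})$, which is non-positive precisely when $y \le p_k + p_{k+1} - x$, matching the hypothesized bound exactly. Finally, if $y \ge p_{k+1}$, combining $y \ge p_{k+1}$ with the hypothesis $y \le p_k + p_{k+1} - x$ and $x \ge p_k$ forces $x = p_k$ and $y = p_{k+1}$, a boundary point at which $d_k = d_{k+1} = p_{k+1} - p_k$, so the non-strict inequality still holds.

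I anticipate no real obstacle beyond careful bookkeeping in the case split. The one point worth highlighting is the asymmetric role the stated bound plays across the three subcases: it is slack in the first, binding in the second, and active only at a single boundary point in the third. This pattern is geometrically natural, as it reflects where the Manhattan ``diamonds'' around $p_k$ and $p_{k+1}$ trade dominance.
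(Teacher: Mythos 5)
Your proof is correct and follows essentially the same route as the paper's: reduce to the single comparison $d_k \le d_{k+1}$ by invoking Proposition~\ref{p2} at index $k$ and Proposition~\ref{p1} at index $k+1$, then resolve the absolute values by a case split on $y$. The only cosmetic difference is that you treat $y \ge p_{k+1}$ as an explicit third (degenerate) case, whereas the paper folds it away by noting that $y \le p_k + p_{k+1} - x \le p_{k+1}$ confines $y$ to the first two cases.
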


\begin{proposition}
\label{p4}
For any $k \in [n-1],$ if $p_{k} \le x \le p_{k+1}$ and $p_{k}+p_{k+1}-x \le y \le 1$, we have $d_{k+1} = \min_{i \in[n]} d_{i}$.
\end{proposition}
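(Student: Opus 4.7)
The plan is to reduce Proposition \ref{p4} to a single pairwise comparison ($d_{k+1}$ vs.\ $d_k$) and then chain the remaining comparisons using Propositions \ref{p1} and \ref{p2}, mirroring the strategy that presumably underlies Proposition \ref{p3}.

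First, I would observe that the hypothesis $y \ge p_k + p_{k+1} - x$ combined with $x \le p_{k+1}$ immediately yields $y \ge p_k$. This lets me cleanly resolve the absolute values in $d_k$ and $d_{k+1}$. I would then prove the central inequality $d_{k+1} \le d_k$ by splitting on whether $y$ lies above or within the strip $[p_k, p_{k+1}]$:
\begin{itemize}
\item If $y \ge p_{k+1}$, then $d_{k+1} = y - x$ and $d_k = x + y - 2p_k$, giving $d_{k+1} - d_k = 2(p_k - x) \le 0$ since $x \ge p_k$.
\item If $p_k \le y \le p_{k+1}$, then $d_{k+1} = 2p_{k+1} - x - y$ and $d_k = x + y - 2p_k$, giving $d_{k+1} - d_k = 2(p_k + p_{k+1} - x - y) \le 0$ by the hypothesis on $y$.
\end{itemize}
So $d_{k+1} \le d_k$ holds throughout the region.

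Next, I would handle the remaining indices. For $j > k+1$ (only nonempty when $k \le n-2$), applying Proposition \ref{p1} with its index replaced by $k+1$ is legitimate because $x \le p_{k+1}$, and it gives $d_{k+1} \le d_j$ directly. For $j < k$ (only nonempty when $k \ge 2$), I would invoke Proposition \ref{p2} at index $k$, which applies because $p_k \le x \le p_{k+1} \le 1$, to get $d_k \le d_j$; combining with the pairwise inequality above yields $d_{k+1} \le d_k \le d_j$. Together these cover every $i \in [n]$, so $d_{k+1} = \min_{i \in [n]} d_i$.

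I do not anticipate a genuine obstacle: the proof is structurally parallel to Proposition \ref{p3}, with the roles of $p_k$ and $p_{k+1}$ swapped and the defining inequality $y \ge p_k + p_{k+1} - x$ replacing its converse. The only point requiring minor care is making sure the subcase $y \le p_k$ is actually precluded by the hypotheses (which it is, as noted above), so that the two subcases in the pairwise comparison exhaust the region without overlap issues at $y = p_k$ or $y = p_{k+1}$ (where both formulas agree anyway).
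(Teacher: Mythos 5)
Your proposal is correct and follows essentially the same route as the paper's proof: reduce to the single comparison $d_{k+1}\le d_k$ (handling indices above $k+1$ via Proposition \ref{p1} and below $k$ via Proposition \ref{p2}), then verify that comparison by the same two-case split on $y$ relative to $p_{k+1}$. Your explicit observation that $y\ge p_k$ is forced by the hypotheses is a small point the paper uses implicitly, but the argument is otherwise identical.
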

The two preceding propositions imply that if the customer is between facilities $k$ and $k+1$, then either facility $k$ or $k+1$ serves as the distance-minimizing facility for the service provider, while the location of the service provider determines which of these two choices minimizes the distance.




\subsection{Objective Function Derivation}
If we locate $n$ facilities, let $F_{n}$ be the corresponding objective function value for given $p_{1},p_{2},\ldots,p_{n}$ values, that is,
$$F_{n} = \int_{0}^{1} \int_{0}^{1} \min_{i \in[n]} d_{i} \,dy \,dx.$$
When $n=1$, we can calculate $F_1$ as follows:
{ \begin{align*}
F_1 &= \int_{0}^{1} \int_{0}^{1} d_{1} \,dy \,dx \\
    & =2 p_{1}^{2}-2 p_{1}+1.
\end{align*}}For $n\ge 2$, based on Corollaries \ref{cor1} and \ref{cor2} and Propositions \ref{p3} and \ref{p4}, we can write $F_n$ as
\begin{align*}
F_n = \int^{p_1}_0 \int^1_0 d_1 \,dy \,dx + \sum_{k=1}^{n-1} I_k \\
+ \int_{p_{n}}^1 \int^1_0 d_n \,dy \,dx,
\end{align*}
where for $k\in [n-1]$, $I_k$ is defined as
\begin{align*}
I_k =\int^{p_{k+1}}_{p_{k}} \int^{p_{k}+p_{k+1}-x}_0 d_{k} \,dy \,dx \\
+\int^{p_{k+1}}_{p_{k}} \int_{p_{k}+p_{k+1}-x}^1 d_{k+1} \,dy \,dx.
\end{align*}

For $2 \le j \le n$, we can calculate $\Delta F_j$, which is defined as $F_j-F_{j-1}$, as follows:
\begin{align*}
\Delta F_j = I_{j-1}+\int_{p_{j}}^1 \int^1_0 d_j \,dy\,dx \\
-\int_{p_{j-1}}^1 \int^1_0 d_{j-1} \,dy \,dx.  
\end{align*}
As shown in Appendix \ref{Appendix:F_j}, the above integral expression can be written as 
\begin{align}
\Delta F_j = \frac{1}{3} p_{j -1}^{3} -\frac{1}{3} p_{j}^{3} + p_{j-1}^{2} p_{j} - p_{j-1} p_{j}^{2} \nonumber\\-2 p_{j-1}^{2} +2 p_{j}^{2} + 2 p_{j-1}-2 p_{j}. \label{Delta}
\end{align}
Therefore, for $n \ge 2$, we have the following expression for $F_n$: 
\begin{align}
F_n=F_1 + \sum_{j=2}^n \Delta F_{j}. \label{F_n}
\end{align}

\subsection{Closed-Form Optimal Solution}

We first characterize the convexity of the objective function through the following theorem.

\begin{theorem}
The objective function for the one-dimensional problem is convex on the feasible region.
\end{theorem}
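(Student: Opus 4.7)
The plan is to prove convexity by showing that the Hessian of $F_n$ is positive semidefinite everywhere on the feasible region $\{0\le p_1\le \cdots \le p_n\le 1\}$. Since $F_n$ is, via equations (\ref{Delta}) and (\ref{F_n}), a polynomial in $p_1,\ldots,p_n$ and the feasible region is convex, this reduces to a purely algebraic PSD check for $H := \nabla^2 F_n$. I note at the outset that one cannot argue convexity directly from the integrand $\min_i d_i(x,y)$: a pointwise minimum of convex functions is not convex in general, and convexity here only emerges after integration.

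The first step is to exploit a sparsity observation: $F_1$ depends only on $p_1$ and each summand $\Delta F_j$ depends only on the consecutive pair $(p_{j-1},p_j)$, so $H$ is tridiagonal. Routine differentiation of (\ref{Delta}) gives the three nonzero second partials of $\Delta F_j$, and assembling the contributions from $F_1,\Delta F_2,\ldots,\Delta F_n$ yields
\begin{align*}
H_{11} &= 2(p_1 + p_2), \qquad H_{nn} = 2(2 - p_{n-1} - p_n),\\
H_{kk} &= 2(p_{k+1} - p_{k-1}) \quad \text{for } 1 < k < n,\\
H_{k,k+1} &= -2(p_{k+1} - p_k) \quad \text{for } 1 \le k \le n-1.
\end{align*}
Under the ordering $p_1\le\cdots\le p_n$ the diagonal is nonnegative and the off-diagonals are nonpositive, which already suggests weak diagonal dominance, but I would not stop there; instead I would push for an explicit sum-of-squares representation.

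The second step is to reparameterize using the gap variables $\delta_0 := p_1$, $\delta_k := p_{k+1} - p_k$ for $1 \le k \le n-1$, and $\delta_n := 1 - p_n$, all nonnegative on the feasible region (and summing to $1$). Substituting the gap identities $p_1+p_2 = 2\delta_0+\delta_1$, $p_{k+1}-p_{k-1} = \delta_{k-1}+\delta_k$, $2-p_{n-1}-p_n = 2\delta_n+\delta_{n-1}$, and $p_{k+1}-p_k = \delta_k$ into the quadratic form $v^T H v$ and collecting terms by $\delta_k$, I expect each interior $\delta_k$ to pull together as a coefficient of $(v_k-v_{k+1})^2$, while $\delta_0$ and $\delta_n$ produce the boundary squares $v_1^2$ and $v_n^2$, giving the identity
\begin{equation*}
v^T H v \;=\; 4\delta_0\, v_1^2 \;+\; 2\sum_{k=1}^{n-1} \delta_k\,(v_k - v_{k+1})^2 \;+\; 4\delta_n\, v_n^2.
\end{equation*}
Since every $\delta_k \ge 0$ on the feasible region, the right-hand side is manifestly nonnegative, so $H \succeq 0$ and $F_n$ is convex. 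The $n=1$ case is immediate from $F_1''(p_1)=4>0$.

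The hard part will be the bookkeeping in the regrouping step: each interior gap $\delta_k$ picks up contributions from two different diagonal Hessian entries and one off-diagonal entry, and the endpoint gaps $\delta_0,\delta_n$ need to be handled separately (with the constants $4$ and $4$ arising from the boundary terms $F_1$ and the $+4p_j$ in $\partial_{p_j}\Delta F_j$, respectively). Once that telescoping-style cancellation is verified, the conclusion is clean; everything else is routine polynomial differentiation.
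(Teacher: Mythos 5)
Your proposal is correct: the Hessian entries you list agree with those derived in the paper (Appendix~A.3), and your sum-of-squares identity checks out. Expanding the right-hand side, the coefficient of $v_1^2$ is $4\delta_0+2\delta_1=2p_1+2p_2=H_{11}$; for $1<k<n$ the coefficient of $v_k^2$ is $2\delta_{k-1}+2\delta_k=2(p_{k+1}-p_{k-1})=H_{kk}$; the coefficient of $v_n^2$ is $2\delta_{n-1}+4\delta_n=4-2p_{n-1}-2p_n=H_{nn}$; and the cross term $-4\delta_k\,v_kv_{k+1}$ matches $2H_{k,k+1}v_kv_{k+1}$. Where you and the paper diverge is only in the final positive-semidefiniteness argument. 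The paper verifies row-by-row (three cases: first row, interior rows, last row) that $\mathbf{H}_n$ is symmetric, diagonally dominant with non-negative diagonal, and then invokes the Gershgorin Disc Theorem to conclude $\mathbf{H}_n\succeq 0$. You instead reparameterize in the gap variables $\delta_0,\ldots,\delta_n$ and exhibit $v^{T}\mathbf{H}_n v$ explicitly as a non-negative combination of squares. The two are closely related --- your decomposition is essentially the standard certificate underlying the diagonal-dominance criterion for this tridiagonal sign pattern --- but yours is self-contained (no appeal to Gershgorin) and slightly more informative, since it exposes exactly when the quadratic form vanishes (e.g., $v_1=v_n=0$ and $v_k=v_{k+1}$ wherever $\delta_k>0$), whereas the paper's argument is shorter to write down because the case check is mechanical. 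Your opening caveat that convexity cannot be read off the integrand $\min_i d_i$ is also well taken and consistent with the paper's reliance on the closed-form expressions \eqref{Delta} and \eqref{F_n}.
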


\begin{proof}{Proof.}
Let $\mathbf{H}_n$ denote the Hessian matrix of $F_n$. Based on Equation (\ref{Delta}), we observe that $\Delta F_j$ involves only decision variables $p_j$ and $p_{j-1}$. Moreover, based on Equation (\ref{F_n}) and the fact that $F_1$ involves only the decision variable $p_1$, $\mathbf{H}_n$ is a tridiagonal matrix (the expressions for the entries of $\mathbf{H}_n$ are provided in Appendix \ref{Appendix:H_n}). Since all second partial derivatives are continuous, $\mathbf{H}_n$ is symmetric. We consider the following three cases:

\noindent Case 1. For row 1, we check that  $\mathbf{H}_n(1,1)=2 p_{1}+2 p_{2}>0$ and $|\mathbf{H}_n(1,1)|-|\mathbf{H}_n(1,2)|=2p_1 + 2p_2 - (2p_2 - 2p_1) = 4p_1 \ge 0.$ 

\noindent Case 2. For row $2 \le i \le n-1$, we check that $\mathbf{H}_n(i,i) = 2p_{i+1} -2p_{i-1} > 0$ and $|\mathbf{H}_n(i,i)| - (|\mathbf{H}_n(i,i-1)| + |\mathbf{H}_n(i,i+1)|) = 2p_{i+1} - 2p_{i-1}  - (2p_i - 2p_{i-1} + 2p_{i+1} - 2p_i) = 0$. 

\noindent Case 3. For row $n$, we check that $\mathbf{H}_n(n,n)= 4 -2p_{n-1}- 2p_n > 0$ and $|\mathbf{H}_n(n,n)| - |\mathbf{H}_n(n,n-1)| =  4 -2p_{n-1} - 2p_n - (2p_n - 2p_{n-1}) = 4 -4p_n \ge 0$. 

\noindent Therefore, $\mathbf{H}_n$ is a real, symmetric, diagonally dominant matrix with non-negative diagonal entries.

Since real, symmetric matrices are a special case of Hermitian matrices, based on the Gershgorin Disc Theorem (see, for example, Theorem 6.1.10 in \citet{horn2012matrix}), a real symmetric diagonally dominant matrix with non-negative diagonal entries is positive semi-definite. Therefore, $\mathbf{H}_n$ is positive semi-definite. Thus, the objective function is convex. 
\end{proof}

Due to the convexity of the objective function, an optimal solution for $p_i$ can be obtained by solving $\nabla F_n = \mathbf{0}$. The closed-form optimal solution is as follows (calculation details are provided in Appendix \ref{Appendix:Closed One}): 
\begin{equation}
    \label{oned}
    p_i^*=\frac{(i-1)\sqrt 2+1}{(n-1)\sqrt 2+2}, \quad i \in [n].
\end{equation}
This result is consistent with the corresponding result from \citet{campbell1990freight}. Moreover, we can check that 
\begin{equation*}
p_i^*+p_{n-i+1}^* =\frac{(n-1)\sqrt2+2}{(n-1)\sqrt2+2}=1,  \quad i \in [n], 
\end{equation*}
that is, the optimal hub locations are symmetric with respect to the center of the $[0,1]$ segment.

Although Equation \eqref{oned} is the same solution that \cite{campbell1990freight} derived, our approach differs fundamentally, and, as we next show, is amenable to generalization to two-dimensional location problems without added restrictions that limit location choices.  

\section{Two-Dimensional One- and Two-Hub Problems}
\label{twodimension}

For the two-dimensional case, we focus on a square region with an edge length of 1. Let $\mathbf{X}=(X_{1},X_{2})$ be the location of a random customer and $\mathbf{Y}=(Y_{1},Y_{2})$ be the location of a random service provider in the region. We assume that random variables $X_{1}$, $X_{2}$, $Y_{1}$, and $Y_{2}$ are mutually independent and uniformly distributed over $[0,1]$. Let $\mathbf{x}=(x_1,x_2)$ and $\mathbf{y}=(y_1,y_2)$, and let $\mathbf{p}_1=(p_{11},p_{12})$, $\mathbf{p}_2=(p_{21},p_{22})$, $\ldots$, $\mathbf{p}_n=(p_{n1},p_{n2})$ be $n$ facility locations on the square, which are to be determined. The corresponding objective function becomes
\begin{equation*}
\int_{0}^{1} \int_{0}^{1} \int_{0}^{1} \int_{0}^{1}  \min_{i \in[n]}d_i \, dy_1 \, dy_2 \, dx_1 \, dx_2,
\end{equation*}
where $d_i = \|\mathbf{x} -\mathbf{p}_{i}\|_1 + \|\mathbf{y} -\mathbf{p}_{i}\|_1$.

In the following subsections, we first investigate the problem of locating one hub and then the problem with two hubs. A more general case that focuses on a non-square rectangular region is considered in Appendix \ref{Appendix:two hub rectangle}.

\subsection{One hub}


Suppose we wish to locate a single hub, that is, $\mathbf{p}_1=(p_{11},p_{12})$. The corresponding objective function $F_1$ reduces to
\begin{equation*}
2 \int_{0}^{1} \int_{0}^{1}   \big(|x_1 -p_{11}| + |x_2 - p_{12}| \big)  \, dx_1 \, dx_2, 
\end{equation*}
which can be expanded as 
\begin{align}
    F_1 &=  2 \Bigg[\int^{p_{12}}_{0}\int^{p_{11}}_0 (p_{11}-x_1+p_{12}-x_2) \, dx_1 \, dx_2 \nonumber \\ 
    &+\int^{1}_{p_{12}}\int^{p_{11}}_0 (p_{11}-x_1+x_2-p_{12}) \, dx_1 \, dx_2 \nonumber \\
    &  +\int^{p_{12}}_{0}\int^{1}_{p_{11}} (x_1-p_{11}+p_{12}-x_2) \, dx_1 \, dx_2  \nonumber \\
    &+\int^{1}_{p_{12}}\int^{1}_{p_{11}} (x_1-p_{11}+x_2-p_{12}) \, dx_1 \, dx_2 \Bigg] \nonumber \\
        &=  2 p_{11}^{2} + 2 p_{12}^{2} - 2 p_{11} -2 p_{12} +2. \label{F1Exp}
\end{align}

Based on Equation (\ref{F1Exp}), the Hessian matrix of $F_1$ is 
$\left[\begin{array}{cc}
4  & \quad 0  \\
0  & \quad 4 
\end{array}\right],$
which is positive definite. Thus, $F_1$ is a convex function of $p_{11}$ and $p_{12}$. Therefore, we can obtain an optimal solution by setting $\nabla F_1 = \mathbf{0}$, which corresponds to solving the following system of equations:
$$\begin{cases}
4 p_{11}- 2 =0, \\
4 p_{12} - 2=0. 
\end{cases} $$
We obtain the optimal location $\mathbf{p}_1^*=\left(\frac{1}{2},\frac{1}{2}\right)$, which is the center of the square. Note that if we apply Equation (\ref{oned}) derived for the one-dimensional case to the horizontal and vertical problems individually, we obtain the same optimal solution.

\subsection{Two hubs}
\label{TC}

In this subsection, we derive an analytical expression for the objective function in the case of two hubs, and determine an optimal solution by solving the corresponding nonlinear program. 

In the two-hub case, we want to determine the locations of $\mathbf{p}_1=(p_{11},p_{12})$ and $\mathbf{p}_2=(p_{21},p_{22})$ on the square. If we have two distinct points in a plane, one of them must be located either on the northeast side of the other one, or on the northwest side of the other one. We first assume that $\mathbf{p}_2$ is on the northeast side of $\mathbf{p}_1$, that is, $p_{21} \geq p_{11}$ and $p_{22} \geq p_{12}$, and, letting $\Delta_1=p_{21}-p_{11}$ and $\Delta_2=p_{22}-p_{12}$, that $\Delta_1 \geq \Delta_2$ (we later use symmetry to characterize the case in which $\mathbf{p}_2$ is on the northwest side and $\Delta_1<\Delta_2$). An illustration of such a situation is provided in Figure \ref{2p}.
 
\begin{figure}[htbp]
\centering
\includegraphics[width=0.5\textwidth]{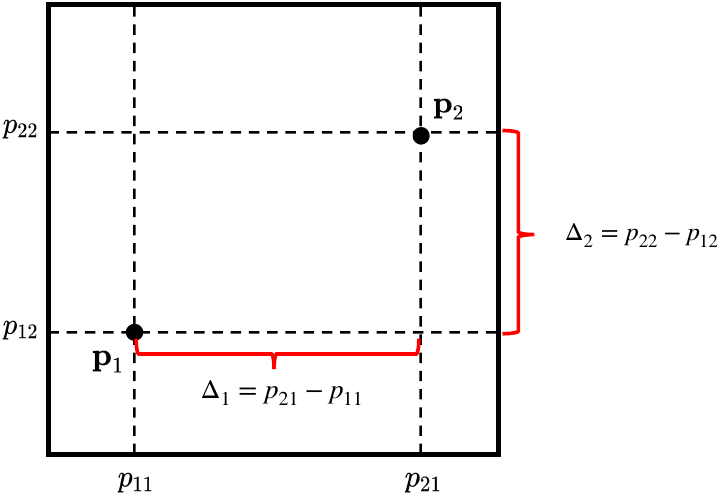}
\caption{Illustration of a Two-Hub Situation}
\label{2p}
\end{figure}

Correspondingly, in order to derive an analytical expression for the objective function, for each given customer location $\mathbf{x}$, we divide the square into a set of non-overlapping areas (excepting for boundary lines between areas) such that, for a service provider location $\mathbf{y}$ within each such area, we can identify one location (either $\mathbf{p}_1$ or $\mathbf{p}_2$) with $\min \{d_1,d_2\}$. Below, we present some details on how to characterize such areas. 

For any pair of $\mathbf{x}$ and $\mathbf{y}$, we say that 
$\mathbf{p}_1$ wins if $d_1 \leq d_2$ and $\mathbf{p}_2$ wins if $d_2 < d_1$. Note that for  ease of analysis and without loss of generality, if $d_1 = d_2$, we state that $\mathbf{p}_1$ wins. Based on the definitions of $d_1$ and $d_2$, we have that $d_1 \leq d_2$ is equivalent to 
\begin{equation}
\|\mathbf{y}-\mathbf{p}_1\|_{1} - \|\mathbf{y}-\mathbf{p}_2\|_{1} \leq  \|\mathbf{x}-\mathbf{p}_2\|_{1} - \|\mathbf{x}-\mathbf{p}_1\|_{1}. \label{Inequality}
\end{equation}

First, we focus on characterizing the right-hand side of Inequality (\ref{Inequality}), which represents the difference between the $\ell^1$ distance from $\mathbf{x}$ to $\mathbf{p}_2$ and the $\ell^1$ distance from $\mathbf{x}$ to $\mathbf{p}_1$. Since  $\mathbf{x}$ can be anywhere on the square, we define $\|\mathbf{x}-\mathbf{p}_2\|_{1} - \|\mathbf{x}-\mathbf{p}_1\|_{1} = b$ as a \textbf{constant distance difference line}, which includes all potential locations for $\mathbf{x}$ 
such that the difference in distance from $\mathbf{x}$ to $\mathbf{p}_2$ and from $\mathbf{x}$ to $\mathbf{p}_1$ equals $b$. If $b=0$, that is, $ \|\mathbf{x}-\mathbf{p}_1\|_{1} =  \|\mathbf{x}-\mathbf{p}_2\|_{1}$, we define such a special case of the constant distance difference line as the \textbf{balance line}.



\begin{figure}[htbp]
\centering
\includegraphics[width=0.3\textwidth]{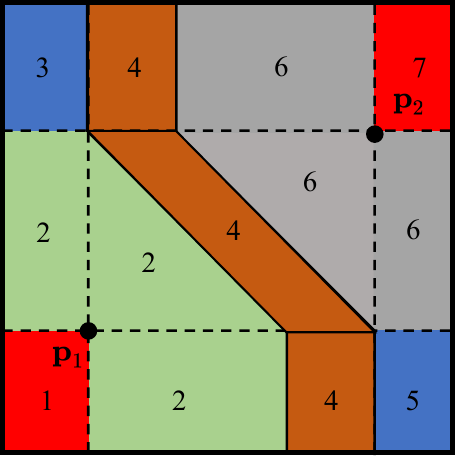}
\caption{Illustration of the Integration Area Division of $\mathbf{x}$}
\label{2p5}
\end{figure}

As shown in Figure \ref{2p5}, based on the locations of $\mathbf{p}_1$ and $\mathbf{p}_2$ and the associated $\Delta_1$ and $\Delta_2$ values, the square can be divided into seven areas using constant distance difference lines, for the location of $\mathbf{x}$. We observe that within each of the Areas 2, 4, and 6, every constant distance difference line consists of three line segments (illustrated by the dashed red lines in Figure \ref{2p6}). In Area 2, every line has horizontal, diagonal, and vertical segments in sequence, in Area 4, every line has vertical, diagonal, and vertical segments, while in Area 6, every line has vertical, diagonal, and horizontal segments. For the four corner areas, that is, Areas 1, 3, 5, and 7, the value of $b$ is the same for each point within the corresponding rectangle. Specifically, for Area 1, $b = \Delta_1+\Delta_2$, for Area 3, $b = \Delta_1-\Delta_2$, for Area 5, $b = \Delta_2-\Delta_1$, and for Area 7, $b = -\Delta_1-\Delta_2$. That is, constant distance difference lines become constant difference \emph{sets} consisting of each point in the rectangle corresponding to each of these four areas.

\begin{figure}[htbp]
\centering
\includegraphics[width=0.5\textwidth]{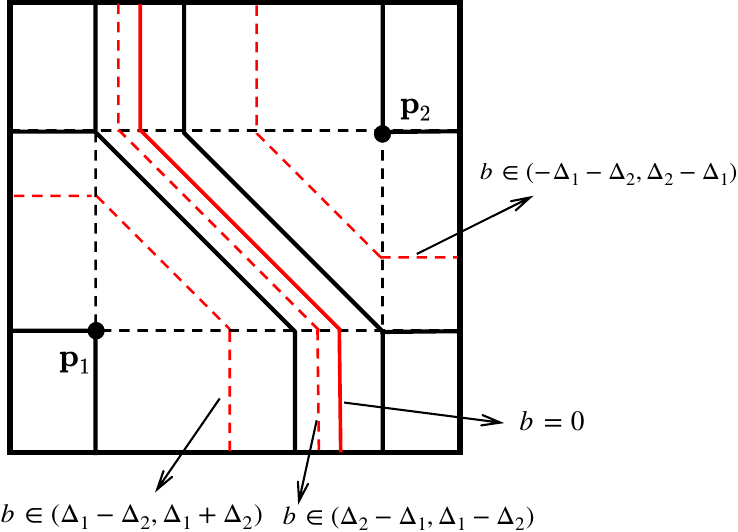}
\caption{Illustration of Constant Distance Difference Lines for Different $b$ Values}
\label{2p6}
\end{figure}

Similar to Corollary \ref{cor1} and Corollary \ref{cor2} introduced for the one-dimensional problem, we have the following two propositions, the proofs of which are in Appendix \ref{Appendix:Proposition}. 

\begin{proposition}
\label{2Dp1}
    If $\mathbf{x}$ is in Area 1, $d_1 = \min \{d_1, d_2\}$ regardless of the location of $\mathbf{y}$ in the square region.
\end{proposition}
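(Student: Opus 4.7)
The plan is to use Inequality (\ref{Inequality}), which rewrites the condition $d_1 \le d_2$ as a comparison between a $\mathbf{y}$-dependent quantity on the left and the $\mathbf{x}$-dependent quantity $b$ on the right. The key observation is that on Area 1 the value of $b$ is already at its global maximum $\Delta_1+\Delta_2$ (this is exactly how Area 1 is characterized in the preceding discussion of corner areas), while the $\mathbf{y}$-dependent left-hand side is bounded above by this same maximum for every $\mathbf{y}$ in the unit square via the reverse triangle inequality in the $\ell^1$ norm.

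Operationally, I would carry this out in three short steps. First, I would pin down Area 1 explicitly: since $\mathbf{p}_2$ lies to the northeast of $\mathbf{p}_1$ and Area 1 is the corner rectangle carrying the value $b = \Delta_1+\Delta_2$, a direct check shows Area 1 is the southwest rectangle $\{(x_1,x_2) : 0 \le x_1 \le p_{11},\ 0 \le x_2 \le p_{12}\}$, i.e., the set of points lying to the southwest of both hubs. For any $\mathbf{x}$ in this rectangle, opening the absolute values yields
\[
\|\mathbf{x}-\mathbf{p}_2\|_1 - \|\mathbf{x}-\mathbf{p}_1\|_1 = (p_{21}-p_{11}) + (p_{22}-p_{12}) = \Delta_1+\Delta_2.
\]
Second, I would apply the reverse triangle inequality for the $\ell^1$ norm to obtain, for every $\mathbf{y}$ in the unit square,
\[
\|\mathbf{y}-\mathbf{p}_1\|_1 - \|\mathbf{y}-\mathbf{p}_2\|_1 \le \|\mathbf{p}_1-\mathbf{p}_2\|_1 = |\Delta_1|+|\Delta_2| = \Delta_1+\Delta_2,
\]
where the last equality uses the standing assumption $\Delta_1,\Delta_2 \ge 0$. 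Third, chaining these two bounds establishes Inequality (\ref{Inequality}), and hence $d_1 \le d_2$, for every $\mathbf{y}$.

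I do not foresee a substantive obstacle: the heart of the argument is that on Area 1 the gap $b$ already saturates the largest possible distance advantage $\mathbf{p}_2$ can enjoy over $\mathbf{p}_1$ at any service-provider location, forcing $\mathbf{p}_1$ to win throughout. The only care needed is the bookkeeping to confirm from Figure \ref{2p5} that Area 1 really is the southwest corner rectangle and to track the signs of $\Delta_1$ and $\Delta_2$ under the working hub-ordering assumption $\Delta_1 \ge \Delta_2 \ge 0$.
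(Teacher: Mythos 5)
Your proof is correct, but it takes a genuinely different route from the paper's. The paper exploits the coordinate-separability of the $\ell^1$ metric: it splits $d_1 \le d_2$ into a horizontal inequality in $(x_1,y_1)$ and a vertical inequality in $(x_2,y_2)$, applies the one-dimensional Corollary \ref{cor1} (with $n=2$) to each coordinate separately, and adds the two resulting inequalities. Your argument instead works directly with the reformulated Inequality (\ref{Inequality}): you verify that on Area 1 the right-hand side equals $\Delta_1+\Delta_2 = \|\mathbf{p}_1-\mathbf{p}_2\|_1$, and you bound the left-hand side by the same quantity for every $\mathbf{y}$ via the reverse triangle inequality. Both computations you rely on are right (Area 1 is indeed the rectangle $[0,p_{11}]\times[0,p_{12}]$, and the nonnegativity of $\Delta_1,\Delta_2$ under the standing northeast assumption gives $|\Delta_1|+|\Delta_2|=\Delta_1+\Delta_2$), so the chain closes. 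What each approach buys: the paper's proof recycles the one-dimensional machinery and stays entirely within the $\ell^1$-specific framework already built in Section \ref{onedimension}; yours is self-contained, fits naturally into the constant-distance-difference-line picture used for Areas 2 through 6, and is metric-agnostic --- the same reverse-triangle-inequality argument shows that hub 1 wins for all $\mathbf{y}$ under \emph{any} norm whenever $\|\mathbf{x}-\mathbf{p}_2\|-\|\mathbf{x}-\mathbf{p}_1\|$ attains its maximum value $\|\mathbf{p}_1-\mathbf{p}_2\|$, which would be useful if one pursued the $\ell^2$ extension mentioned in the conclusion.
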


\begin{proposition}
\label{2Dp2}
    If $\mathbf{x}$ is in Area 7, $d_2 = \min \{d_1, d_2\}$ regardless of the location of $\mathbf{y}$ in the square region.
\end{proposition}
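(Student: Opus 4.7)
The plan is to exploit the geometric description of Area 7 that has already been recorded in the paper. From the discussion preceding the proposition, Area 7 is the corner rectangle on which the constant distance difference takes the extreme value $b = -\Delta_1 - \Delta_2$, i.e.,
$$\|\mathbf{x} - \mathbf{p}_1\|_1 - \|\mathbf{x} - \mathbf{p}_2\|_1 \;=\; \Delta_1 + \Delta_2 \;=\; \|\mathbf{p}_1 - \mathbf{p}_2\|_1.$$
First I would verify this identity directly: the geometry of Figure~\ref{2p5} identifies Area 7 as the sub-rectangle $\{(x_1,x_2) : x_1 \geq p_{21},\, x_2 \geq p_{22}\}$, on which both absolute values inside $\|\mathbf{x}-\mathbf{p}_1\|_1$ and $\|\mathbf{x}-\mathbf{p}_2\|_1$ open with the same sign, so the difference collapses to $(p_{21}-p_{11}) + (p_{22}-p_{12}) = \Delta_1 + \Delta_2$.

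Next, I would rewrite the desired inequality $d_2 \leq d_1$ in the same algebraic form used in Inequality~(\ref{Inequality}), namely
$$\|\mathbf{x} - \mathbf{p}_1\|_1 - \|\mathbf{x} - \mathbf{p}_2\|_1 \;\geq\; \|\mathbf{y} - \mathbf{p}_2\|_1 - \|\mathbf{y} - \mathbf{p}_1\|_1.$$
By the first step, the left-hand side equals $\Delta_1 + \Delta_2$, while the right-hand side is bounded above by $\|\mathbf{p}_2 - \mathbf{p}_1\|_1 = \Delta_1 + \Delta_2$ by the reverse triangle inequality for the $\ell^1$ norm, and this bound is valid for every $\mathbf{y}$ in the square (in fact in the whole plane). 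Combining the two yields the desired inequality, and hence $d_2 = \min\{d_1,d_2\}$.

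The proof of Proposition~\ref{2Dp1} is the mirror image: in Area 1 one has $x_1 \leq p_{11}$ and $x_2 \leq p_{12}$, giving $\|\mathbf{x}-\mathbf{p}_2\|_1 - \|\mathbf{x}-\mathbf{p}_1\|_1 = \Delta_1 + \Delta_2$, and the same reverse triangle inequality, with the roles of $\mathbf{p}_1$ and $\mathbf{p}_2$ swapped, closes the argument. The only real obstacle is bookkeeping: confirming from Figure~\ref{2p5} that Area~7 is indeed the northeast corner sub-rectangle and that $\Delta_1 \geq \Delta_2 \geq 0$ does not enter the bound (it does not — only $\Delta_1,\Delta_2 \geq 0$ is used, so the argument extends to the symmetric northwest/southeast configuration without change). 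Once that geometric identification is in hand, the analytic content is nothing more than a single application of the reverse triangle inequality, so the proof is quite short.
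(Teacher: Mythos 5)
Your proof is correct, but it takes a genuinely different route from the paper's. The paper proves Proposition \ref{2Dp2} by exploiting the coordinate-separability of the $\ell^1$ norm: it splits the two-dimensional inequality $d_2 \le d_1$ into two one-dimensional inequalities (one per coordinate), each of which is exactly the statement of Corollary \ref{cor2} with $n=2$, and then adds them. Your argument instead observes that on Area 7 the customer-side difference saturates, $\|\mathbf{x}-\mathbf{p}_1\|_1 - \|\mathbf{x}-\mathbf{p}_2\|_1 = \Delta_1+\Delta_2 = \|\mathbf{p}_1-\mathbf{p}_2\|_1$, while the provider-side difference $\|\mathbf{y}-\mathbf{p}_2\|_1 - \|\mathbf{y}-\mathbf{p}_1\|_1$ can never exceed $\|\mathbf{p}_1-\mathbf{p}_2\|_1$ by the reverse triangle inequality, so Inequality (\ref{Inequality}) (with the roles reversed) holds for every $\mathbf{y}$ in the plane. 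Both proofs are complete; the trade-off is that the paper's version reuses the one-dimensional machinery it has already built (and is thus thematically consistent with the rest of Section \ref{onedimension}), whereas yours is shorter, explains \emph{why} the corner areas are special (they are precisely where the constant distance difference is extremal, so dominance is forced by the triangle inequality alone), and the inequality step is norm-agnostic --- only the identification of Area 7 as the saturation set uses $\ell^1$ geometry. Your remark that the argument does not use $\Delta_1 \ge \Delta_2$ is also correct and matches the paper's implicit treatment of the symmetric configurations.
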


Since the diagonal segment of the balance line (illustrated by the solid red line in Figure \ref{2p6}) passes through the point $\left(\frac{p_{11}+p_{21}}{2},\frac{p_{12}+p_{22}}{2}\right)$ and has a slope of $-1$, we can derive its equation as
\begin{equation}
    2x_1 + 2x_2 = p_{11} + p_{12} + p_{21} + p_{22}. \label{edy}
\end{equation}
For any constant distance difference line with $b \in (-\Delta_1-\Delta_2,\Delta_1+\Delta_2)$, that is, with $\|\mathbf{x}-\mathbf{p}_2\|_{1} - \|\mathbf{x}-\mathbf{p}_1\|_{1} = b$, we can write the equation for the corresponding diagonal segment as
\begin{equation}
2x_1 + 2x_2 + b = p_{11}+p_{12}+p_{21}+p_{22}. \label{xddl}
\end{equation}

As shown in Figure \ref{div}, given an $\mathbf{x}$ located in the area corresponding to the label below each subfigure (for Areas 2 through 7), we can define an associated \textbf{separation line} (shown as the boundary lines in Figure \ref{div}) such that if $\mathbf{y}$ lies in the red area then 
$\|\mathbf{y}-\mathbf{p}_1\|_{1} - \|\mathbf{y}-\mathbf{p}_2\|_{1} \leq b$ (and $\mathbf{p}_1$ wins), whereas if $\mathbf{y}$ lies in the blue area then $\|\mathbf{y}-\mathbf{p}_1\|_{1} - \|\mathbf{y}-\mathbf{p}_2\|_{1} > b$ (and $\mathbf{p}_2$ wins). Accordingly, for any $b \in (-\Delta_1-\Delta_2,\Delta_1+\Delta_2),$ each associated separation line (shown as the boundary lines for Areas 2 through 6 in Figure \ref{div}) also has three segments, the middle of which is diagonal. For a given $b$, the corresponding diagonal segment's equation is 
\begin{equation}
    2y_1 + 2y_2 - b = p_{11}+p_{12}+p_{21}+p_{22}.\label{yddl}
\end{equation}

\begin{figure}[htbp]
\centering
\includegraphics[width=0.8\textwidth]{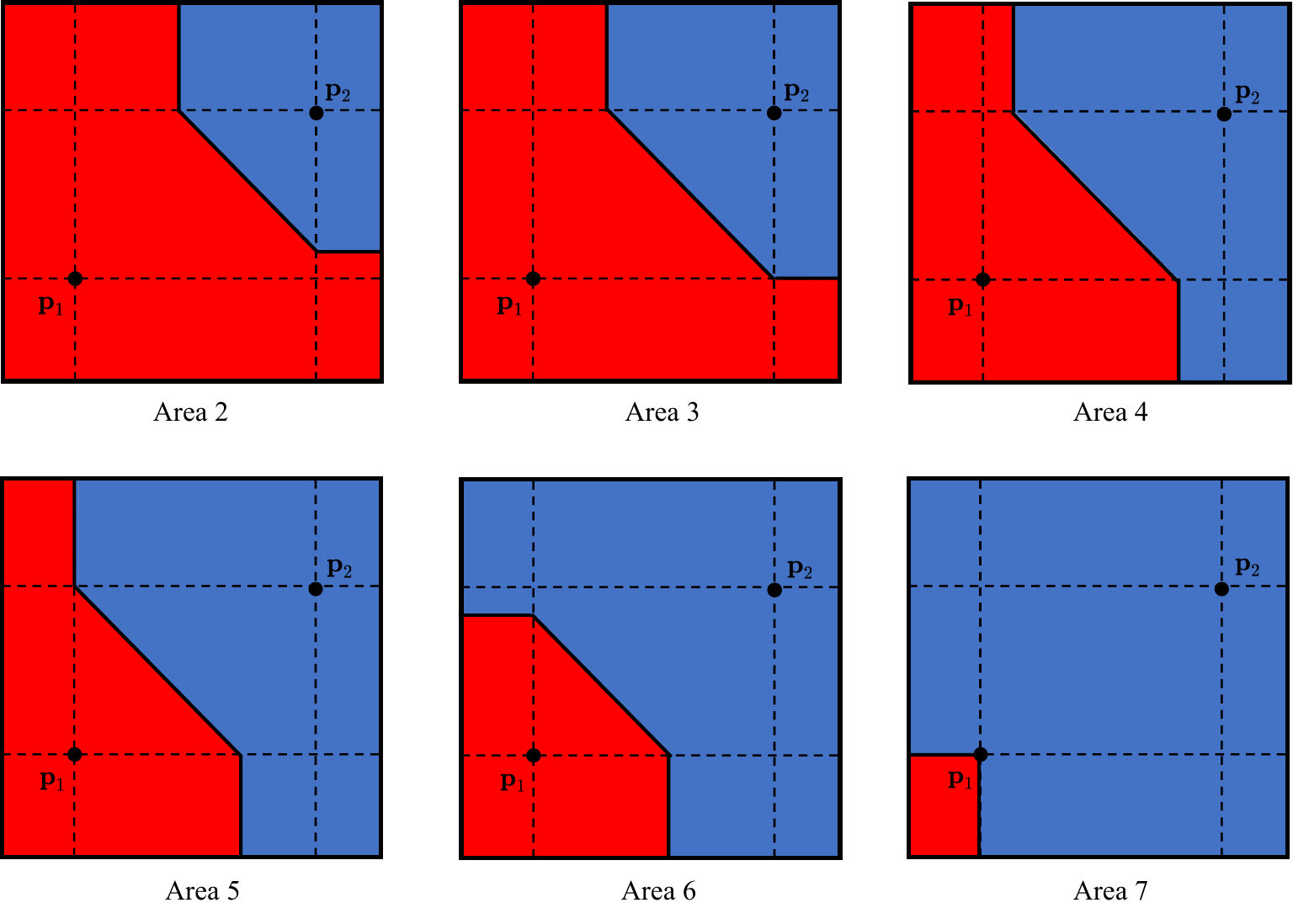}
\caption{Illustration of the Integration Area of $\mathbf{y}$ Given $\mathbf{x}$ in Area 2 to Area 7}
\label{div}
\end{figure}

The diagonal segment of the separation line (\ref{yddl}) and the diagonal segment of the constant distance difference line (\ref{xddl}) are symmetric about the balance line (\ref{edy}), and each of these has a slope of $-1$. As a result, given that $\mathbf{x}$ lies within each of the areas shown in Figure \ref{2p5}, we can explicitly obtain the areas of integration for $\mathbf{y}$ and the corresponding distance minimizing hub for each area (recall that Proposition \ref{2Dp1} implies that $\mathbf{p}_1$ always wins when $\mathbf{x}$ is in Area 1). Based on a detailed area-by-area integration procedure, we obtain the objective function denoted as $F_2$ (all details are provided in Appendix \ref{Appendix:OBJ Expression of Two hubs}).

The corresponding nonlinear programming model is as follows:
\begin{align*}
\min \quad & F_2 \\
\mbox{s.t.} \quad &p_{21} \geq p_{11}, \\
    &p_{22} \geq p_{12}, \\
    &p_{21}-p_{11} \geq p_{22}-p_{12}, \\
    &0 \leq p_{11}, p_{12}, p_{21}, p_{22} \leq 1.
\end{align*} 
We observe that the objective function is a multi-variate polynomial function of degree 5 that is not convex with respect to the decision variables  $p_{11}$, $p_{12}$, $p_{21}$, and $p_{22}$. The optimal solution obtained using the global optimization solver BARON (\citet{sahinidis1996baron}) is $\mathbf{p}_1^*=(0.3237,0.3650)$ and $\mathbf{p}_2^*=(0.6763,0.6350)$, and the corresponding objective function value is 0.8746. Since $0.3237 + 0.6763 = 1$ and $0.3650 + 0.6350 =1$, we observe that $\mathbf{p}_1^*$ and $\mathbf{p}_2^*$ are symmetric with respect to the center of the square. Moreover, they are off the bottom-left to top-right diagonal of the square but lie close to it.
Similarly, following the same procedure, we can obtain three alternative optimal solutions, that is, $\mathbf{p}_1^*=(0.3650,0.3237)$ and $\mathbf{p}_2^*=(0.6350,0.6763)$ when $\Delta_1 < \Delta_2$, and 
$\mathbf{p}_1^*=(0.6763,0.3650)$ and $\mathbf{p}_2^*=(0.3237,0.6350)$ and  $\mathbf{p}_1^*=(0.6350,0.3237)$ and $\mathbf{p}_2^*=(0.3650,0.6763)$ for the cases where $\mathbf{p}_2$ is to the northwest of $\mathbf{p}_1$. 

It is worth noting that if we assume that $p_{12}=p_{22}$, then the expression for $F_2$ simplifies to 
\begin{align*}
    F_2 &=2+p_{11}^{2} p_{21}-p_{11} p_{21}^{2}+\frac{1}{3} p_{11}^{3}\\
    &-\frac{1}{3} p_{21}^{3}-2 p_{21}+2 p_{21}^{2}+2 p_{22}^{2}-2 p_{22}. 
\end{align*}
The optimal solution for this special case is $\mathbf{p}_1^*=\left(\frac{2-\sqrt{2}}{2},\frac{1}{2}\right)$ and $\mathbf{p}_2^*=\left(\frac{\sqrt{2}}{2},\frac{1}{2}\right)$, which is consistent with the two-dimensional result in \citet{campbell1990freight}'s lattice setting. 

\section{Two-Dimensional $n$-Hub Problem}
\label{sec:Simulation approximation}

Because it becomes extremely difficult to extend the procedure proposed for the two-hub setting to problems with three or more hubs, in this section, we propose a simulation-based approximation method to handle the two-dimensional $n$-hub problem. Our approximation method consists of two steps. First, we discretize possible customer, service provider, and  hub locations using grids of different sizes, and construct a large-scale discrete $p$-median problem. Using Benders decomposition, we can quickly obtain an initial solution, which serves as input for the second step. Second, we leverage simulation optimization to continually improve the performance of the solution in the continuous space. The following subsections provide details of our implementation and experimental results.

\subsection{Discrete $p$-Median Problem}

A unique dimension of our problem analysis lies in the continuity of the service provider and customer location spaces and the solution space (i.e., the locations of hubs). Furthermore, a non-trivial component of our problem is in characterizing and computing the expected value in the objective function,  which involves the integration over continuous spaces. To address this, we employ a numerical integration technique (i.e., quadrature) to approximate the objective function. Specifically, we discretize the continuous space into a grid with a total of $S$ points, where each point represents a potential location for a random service provider or a random customer. Assuming a uniform distribution setting, the probability of the provider or customer being at each point is $1/S$. Consequently, we obtain a total of $J=S^2$ customer and service provider demand pairs. Likewise, we discretize the continuous solution space into a grid containing $K$ points, such that each point serves as a potential location for one of the $n$ hubs. 

\subsubsection{Classical Formulation}
\label{classicalformulation}
For each demand pair and potential hub location, we can calculate a corresponding $\ell^1$ travel distance as defined by $ f_{jk}, j\in[J],k\in[K]$. In addition, we define $v_k$ as a binary decision variable, which is equal to 1 if a hub is located at point $k$ and 0 otherwise. Similarly, we define $u_{jk}$ as a binary decision variable, which is equal to 1 if a facility at point $k$ is utilized for demand pair $j$ and 0 otherwise. Consequently, the discrete version of our continuous $n$-hub location problem can be formulated as the following binary integer program:
\begin{align}
\min \quad & \frac{1}{J}\sum^J_{j=1}\sum^K_{k=1}f_{jk}u_{jk}\label{pmedian_obj}\\ 
\text { s.t.} \quad & \sum^K_{k=1}v_k=n, \label{pmedian_1} \\
& \sum^K_{k=1}u_{jk}=1, \quad j\in [J], \label{pmedian_2}\\
& u_{jk}\le v_k, \quad j\in [J], k\in [K], \label{pmedian_3}\\
& u_{jk} \in \{0,1\}, \quad j\in [J], k\in [K], \label{pmedian_4}\\
& v_k\in \{0,1\}, \quad  k\in [K]. 
\end{align}
Constraint (\ref{pmedian_1}) states that a total of $n$ hubs will be located. Constraint set (\ref{pmedian_2}) ensures that exactly one facility will be selected for each demand pair. Constraint set (\ref{pmedian_3}) guarantees that if a hub is not located at point $k$, that is, $v_{k}=0$, no demand pair can utilize a facility at point $k$. Finally, Constraint set (\ref{pmedian_4}) can be relaxed to $ 0\le u_{jk}\le 1, j\in [J], k\in [K]$ without loss of optimality. The associated optimization model is exactly the classical $p$-median formulation by \cite{revelle1970central} if we rescale each $f_{jk}$ by $\frac{1}{J}$. Since the value of $J$ is large even for a small-size grid, our problem is equivalent to a large-scale $p$-median problem after discretization. 

\subsubsection{Elloumi’s Formulation}
\label{SUB:Elloumi}
The grids used for discretization contain regular shapes, and the demand pairs follow an organized pattern. As a result, the travel distance multiset $T_j = \{f_{jk}: k\in [K]\},  j\in [J] $, which encodes the travel distance between demand pair $j$ via any potential facility location will contain many duplicate values. That is, given a demand pair, numerous candidate facility locations share the same travel distance.  To leverage this property, we adopt the $p$-median formulation proposed by \cite{elloumi2010tighter},  which results in a significant reduction in the number of decision variables when compared to the classical formulation in Section \ref{classicalformulation}.

In Elloumi's formulation, instead of considering $f_{jk}$ for each demand pair $j$ and potential hub location $k$ in the objective function, the formulation only considers distinct distances for each demand pair $j$ . Let $R_j$ be the number of distinct distances in $T_j$, and let $D^1_j < D^2_j < \cdots < D^{R_j}_j$ be these sorted distances. Additionally, a binary decision variable $z_j^r, j\in[J], r\in [R_j]$ is introduced to track the travel distance associated with demand pair $j$, replacing $u_{jk}$ in the classical formulation. The variable $z_j^r$ is set to 0  if and only if there exists an assigned hub location $k$ with $f_{jk} \le D^{r}_j$.  The location variable $v_k$ remains the same as in the classical formulation. As a result, Elloumi's formulation can be represented as
\begin{align}
    \min \; &\frac{1}{J}\sum^J_{j=1}\left(D^1_j+ \sum^{R_j-1}_{r=1} \left(D^{r+1}_j-D^r_j\right)z^r_j\right)\\
    \text{s.t. } \;&\sum^K_{k=1}v_k=n, \label{Elloum_1} \\
    & z^1_j+\sum_{k:f_{jk}=D^1_j}v_k \ge 1, \quad j\in [J], \label{Elloum_2}\\ 
    & z^r_j+\sum_{k:f_{jk}=D^r_j}v_k \ge z^{r-1}_j, j\in [J], r\in [R_j]\backslash\{1\},\label{Elloum_3} \\
    & z^r_j \ge 0,\quad j\in [J], r \in [R_j],  \label{Elloum_4} \\
    & v_k\in \{0,1\}, \quad k\in [K].
\end{align}
Constraint (\ref{Elloum_1}) is the same as Constraint (\ref{pmedian_1}). Constraint sets (\ref{Elloum_2}) and (\ref{Elloum_3}) ensure that if no assigned hub location $k$ with $f_{jk} \le D^{r}_j$ exists, $z^r_j$ must be set to 1.  Constraint set (\ref{Elloum_4}) indicates that $z^r_j$ can be relaxed and treated as a continuous variable without loss of optimality. 

\subsubsection{Benders Decomposition}

We apply Benders decomposition \citep{bnnobrs1962partitioning,duran2023efficient} to solve the large-scale $p$-median problem efficiently, as it has demonstrated superior performance in solving facility location problems \citep{cornuejols1980canonical,magnanti1981accelerating, fischetti2017redesigning}. This is attributed to the closed-form Benders cuts generated by the corresponding sub-problems.


At iteration $\tau$, the formulation of the master problem is:
\begin{align*}
    \min \quad & \frac{1}{J}\sum^J_{j=1} \eta_j \\\text { s.t.} \quad & \sum^K_{k=1}v_k=n,\\
     & v_k\in \{0,1\}, \quad k\in [K], \\
     & BC_{j}^{l},  \quad j\in [J], l \in [\tau],
\end{align*}
where $BC_{j}^{l}$ corresponds to the Benders cut generated by the $l^{\rm{th}}$ Benders subproblem solution.  According to \cite{duran2023efficient}, the Benders cut, $BC_{j}^{\tau+1}$, has the following closed form:
$$
    BC_{j}^{\tau+1}=\begin{cases}\eta_j \ge D^1_j, & \text { if } \tilde r_j^{\tau+1} =0, \\
    \eta_j \ge D^{\tilde r_j^{\tau+1}+1}_j-\sum_{k:f_{jk}\le D_j^{\tilde r_j^{\tau+1}}}\left(D^{\tilde r_j^{\tau+1}+1}_j-f_{jk}\right)v_k, & \text { otherwise,} \end{cases}
$$
where $\tilde r_j^{\tau+1}$ is calculated via the relaxed master problem's solution $\bar {\mathbf{v}}^{\tau}$ as
$$
\tilde r_j^{\tau+1}= \begin{cases}0, & \text { if } \sum_{k:f_{jk}=D^1_j}\bar v_k^{\tau} \ge 1, \\ \max\left\{ r\in [R_j]:\sum_{k:f_{jk}\le D^r_j}\bar v_k^{\tau} < 1\right\}, & \text { otherwise.} \end{cases}
$$

The scalability of our discretization method is an important feature, as it allows adjusting the grid size to balance approximation quality and computation speed. We leverage this scalability by solving the problem on customer and provider grids with fewer points as a heuristic to obtain a high-quality initial solution. This initial solution, in turn, serves as the basis for generating initial Benders cuts, leading to a substantial reduction in the number of iterations and enhanced computational efficiency.

\subsubsection{Model Reduction}

We mentioned the reduction of the number of required decision variables using Elloumi’s formulation in Section \ref{SUB:Elloumi}. Specifically, the total number of $\mathbf{u}$ variables is $J\times K$ in the classical formulation. In contrast, the total number of $\mathbf{z}$ variables in Elloumi’s formulation is $\sum^J_{j=1}R_j$, which is significantly smaller than $J\times K$. We can further reduce the model size by decreasing the number of demand pairs. The current total number of demand pairs is $S^2$. By exploiting the symmetric structure of these demand pairs, the number of demand pairs can be reduced to $S(S+1)/2$. To illustrate the significance of model reduction, suppose we use a $10\times10$ grid with 121 points for the provider and customer location spaces and a $20\times20$ grid with 441 points for the solution space. The total number of demand pairs is 14,641. After the demand pair reduction, the number is reduced to 7,381. With the reduced number of demand pairs, the classical formulation has 3,255,021 $\mathbf{u}$ variables, whereas Elloumi's formulation has 155,661 $\mathbf{z}$ variables, resulting in a reduction of more than $95\%$.

\subsection{Simulation Optimization}

Simulation optimization typically refers to optimization in a setting where the objective function cannot be computed exactly \citep{fu2015handbook}. The expectation in the objective function of our problem aligns with this setting. In other words, obtaining a closed-form expression for the objective function is no longer straightforward, and the expression becomes intractable quickly as the number of hubs increases. Fortunately, if a feasible set of hub locations is given, the objective function associated with the feasible solution can be approximately evaluated by simulation in a trivial way, by averaging the sum over the minimum distances between hubs and sampled demand pairs. 

Because our problem is a high-dimensional optimization problem, where the dimensionality of the optimization problem is twice the number of hubs, we use simultaneous perturbation stochastic approximation (SPSA) \citep{spall1992multivariate} as the simulation optimization method.  Unlike traditional gradient-based methods that require evaluating the gradient of the objective function with respect to each decision variable, SPSA approximates the gradient using only two measurements of the objective function per iteration, regardless of the problem's dimensionality.

SPSA is a black box algorithm that requires fine-tuning of a set of hyperparameters. Although \cite{spall1998implementation} offers systematic procedures for determining these parameters, users are still required to set the ``step" parameter beforehand, representing the initial desired magnitude of change in the decision variable values. The grid size of the solution space set in the discretization step serves as a useful guide for setting up the step parameter, as we discuss in the following section. Moreover, like many other simulation optimization methods, SPSA has the potential to become trapped in local optima. This limitation is mitigated by using the solution to the discretized problem as an initial starting point, which is itself typically a high-quality solution.

\subsection{Computational Experiments}

The experiments were conducted using an Apple M2 pro processor with a clock speed of 3.5 GHz and 16 GB of RAM. The code is developed in Python 3.7, utilizing Gurobi 11.0. 

\subsubsection{Discretization}

\label{discrete_subs} 

We discretize the provider and customer spaces using a $10\times10$ grid and the continuous solution space using a $20\times20$ grid, where each intersection point serves as a potential location. As a result, the large-scale $p$-median problem contains 7,381 demand pairs (after demand reduction) and 441 potential hub locations. Moreover, we solve a smaller problem with a $5\times5$ grid for the provider and customer spaces and a $20\times20$ solution space grid using Gurobi directly to provide an initial solution for the Benders decomposition algorithm. The classical and Elloumi's formulations are directly solved using Gurobi's default mixed-integer solver, and the Benders decomposition algorithm is implemented by a Branch-and-Benders-Cut framework with callback functions. A comparison of computation times among the classical formulation, Elloumi's formulation, and Benders decomposition (Elloumi's formulation with Benders decomposition), as shown in Table \ref{result}, reveals significant time savings with our approach. 

\begin{table}[h!]
\centering
\caption{Comparison of Computation Times (sec)}
\label{result}
\begin{tabular}{@{}lccc@{}}
\toprule
$n$ & Classical Formulation & Elloumi's Formulation & Benders Decomposition \\ 
\midrule
2  & 287   & 35   & 21  \\
3  & 931   & 48   & 24  \\
4  & 31    & 26   & 6   \\
5  & 996   & 143  & 17  \\
6  & 4,329 & 236  & 64  \\
7  & 3,324 & 234  & 59  \\
8  & 3,407 & 183  & 82  \\
9  & 1,453 & 123  & 20  \\
10 & 9,259 & 501  & 231 \\ 
\bottomrule
\end{tabular}
\end{table}


It is worth noting that most cases with a large value of $n$ can be efficiently solved by Benders decomposition within a few iterations, largely attributable to the fact that the $p$-median problem is known to be ``Integer Friendly" \citep{morris1978extent} because the gap between the linear programming (LP) relaxation solution and the optimal integer solution is often quite small or nonexistent. We conduct experiments for $n \in \{20,25,30,\ldots, 200\}$ on a $20\times20$ solution space grid, and for $n \in \{50,55,60,\ldots, 300\}$ on a $50\times50$ solution space grid. As shown in Figure \ref{Large_p}, for large $n$ values, the problems can be solved rapidly, with the majority of cases yielding optimal integer solutions by solving their LP relaxations.


\begin{figure}[htbp]
    \centering
    \begin{subfigure}{0.45\textwidth}
        \centering
        \includegraphics[width=\linewidth]{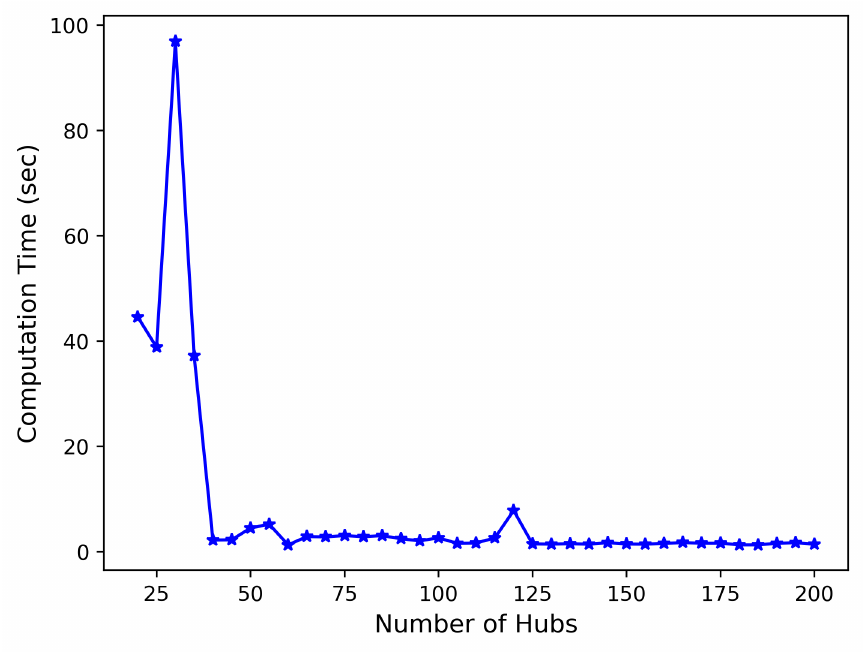}
        \caption{$20\times20$ Solution Space Grid}
    \end{subfigure}
    \hspace{1em}
    \begin{subfigure}{0.45\textwidth}
        \centering
        \includegraphics[width=\linewidth]{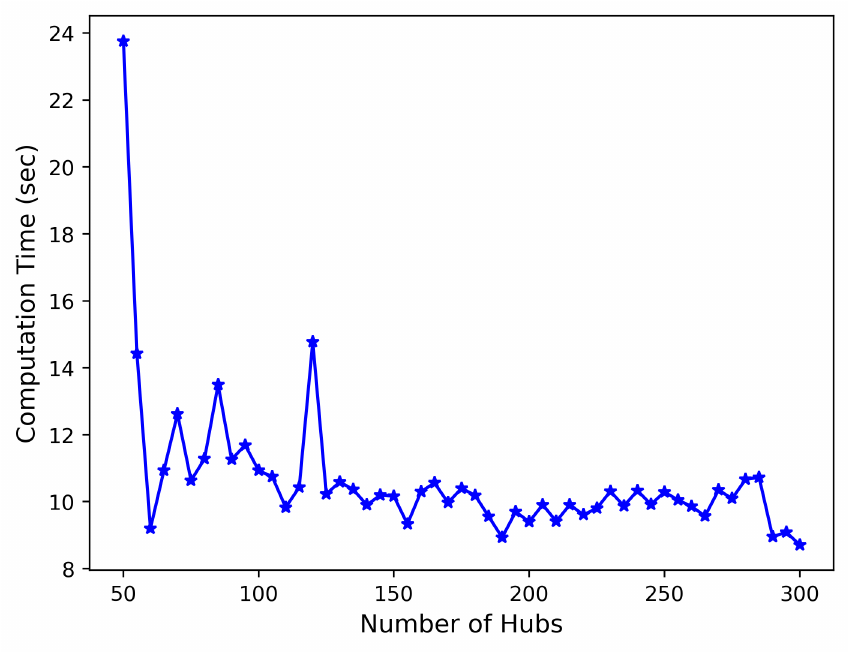}
        \caption{$50\times50$ Solution Space Grid}
    \end{subfigure}
    \caption{Benders Decomposition Computation Times for Large $n$ Values}
    \label{Large_p}
\end{figure}

\subsubsection{SPSA}

\label{SPSAsetting}

In the SPSA experimental settings, we use three sample sizes (i.e., 5,000, 10,000, and 100,000) with samples generated using the Monte Carlo method for objective function evaluation in each SPSA iteration. We tested the continuous problem instances with $n = 2, 3, \ldots, 10$. Although the discretized solution method with Benders decomposition can generate initial instances with larger values of $n$, the required solution space and time increase quickly as the number of hubs increases. To ensure statistical significance and mitigate the impact of randomness, each experiment is repeated 20 times. Only the solutions that have converged are considered for further analysis. Here, we define \emph{unconverged} solutions as those such that 1) an iteration limit of 500 is reached; 2) the objective function value (estimated by the specific sample size) of the terminated solution is greater than the initial solution value; or 3) the terminated solution is outside the square. Finally, the true objective function values of solutions are estimated using a large sample size of 10,000,000.

All hyperparameters except the step parameter are fine-tuned using Spall's suggested methods \citep{spall1998implementation}. For the step parameter, since the resolution of the initial solution is 0.05 for the $20\times20$ solution space grid, we can use a value smaller than this to set the step parameter, e.g., 0.02. While this approach typically suffices, in scenarios where the initial solution is very close to the optimal solution, a smaller value, such as 0.005, is recommended to prevent solutions from not converging. In such cases, the computation time does not significantly increase due to the initial high-quality solution.

In order to demonstrate the superiority and necessity of the discretization step (we refer to this as the Benders approach), we also conducted an experiment on initial solution generation by utilizing a uniform distribution to generate locations on a square (we refer to this as the Random approach). In contrast to the starting solution from the Benders approach, where we can use the distance between grid points as a guide for determining an upper bound on the step size, no such grid points are available for bounding the step parameter under the Random approach. Hence, we fine-tune the step size by employing a grid search (e.g., 0.1 for $n=2$ to  $n=6$ and 0.05 for the rest). Other hyperparameters are also fine-tuned by Spall's methods. The experimental results corresponding to the mean computation time (for converged instances), mean objective value difference, and number of instances without convergence are depicted in Figures \ref{CT} and \ref{CMOV}.  It is important to note that all of the solutions converged under the Benders approach, while this is not the case under the Random approach, as seen in Figure \ref{CMOV:b}.


\begin{figure}[htbp]
    \centering
    \begin{subfigure}{0.3\textwidth}
        \centering
        \includegraphics[width=\linewidth]{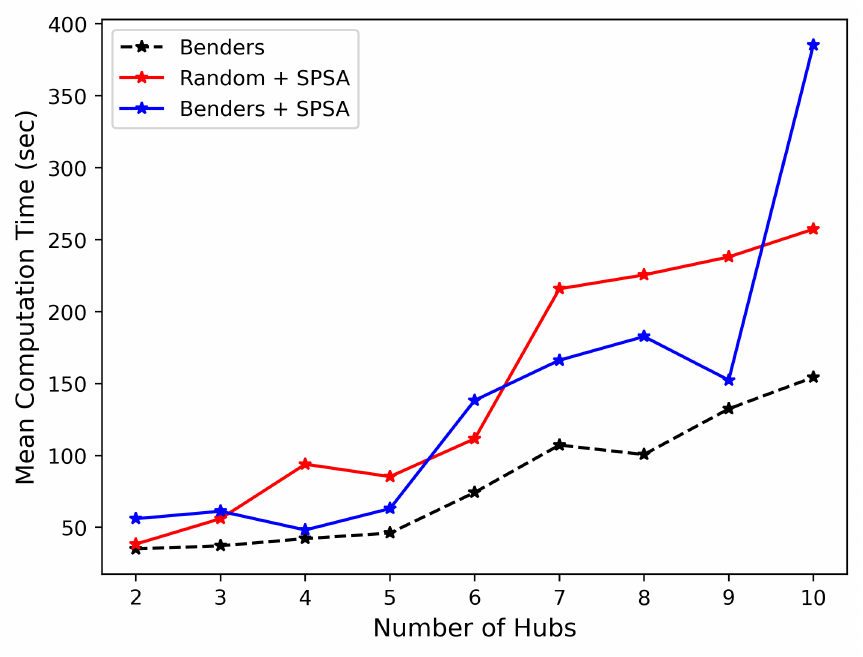}
        \caption{ }
    \end{subfigure}
    \hspace{1em}
    \begin{subfigure}{0.3\textwidth}
        \centering
        \includegraphics[width=\linewidth]{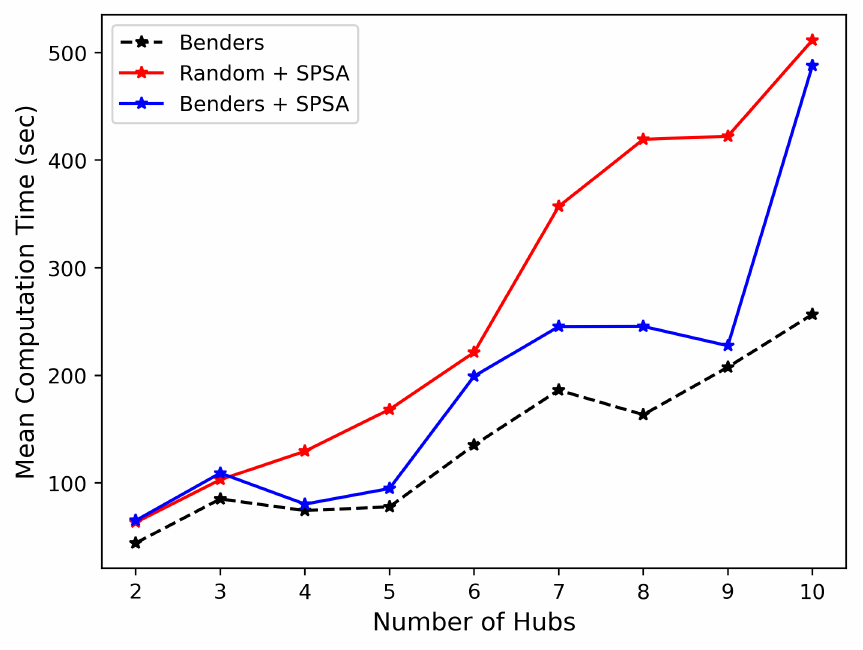}
        \caption{ }
        \label{CT_1000}
    \end{subfigure}
    \hspace{1em}
    \begin{subfigure}{0.3\textwidth}
        \centering
        \includegraphics[width=\linewidth]{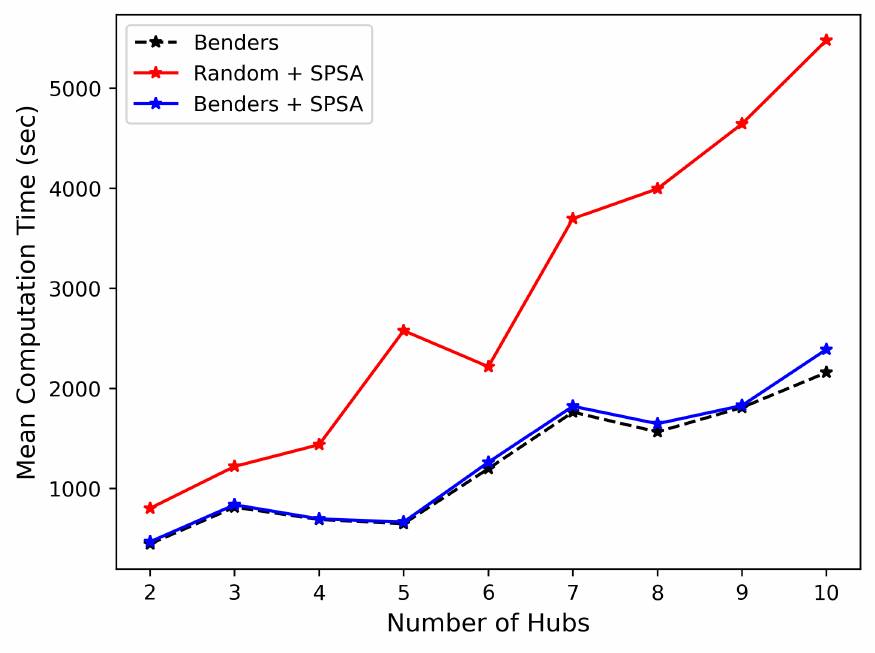}
        \caption{ }
        \label{CT_10000}
    \end{subfigure}
    \caption{Comparison of Mean Computation Times: (a) Sample Size 5,000, (b) Sample Size 10,000, and (c) Sample Size 100,000 }
    \label{CT}
\end{figure}


\begin{figure}[htbp]
    \centering
    \begin{subfigure}{0.45\textwidth}
        \centering
        \includegraphics[width=\linewidth]{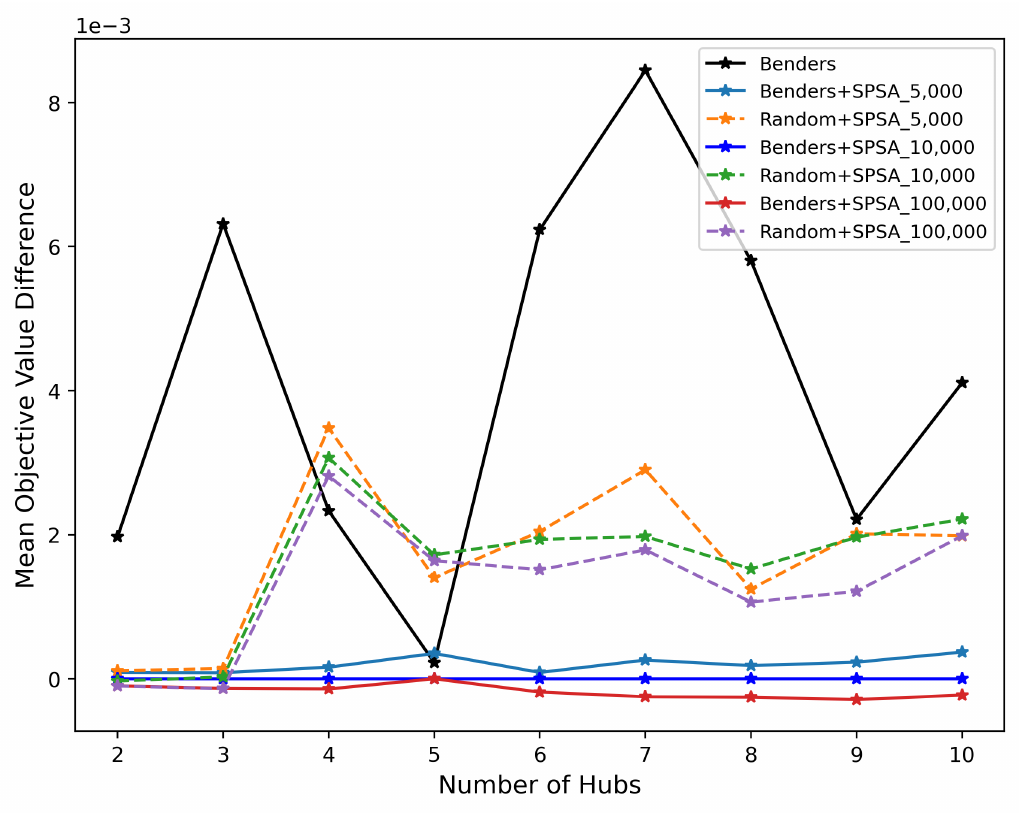}
        \caption{ }
        \label{CMOV:a}
    \end{subfigure}
    \hspace{1em}
    \begin{subfigure}{0.45\textwidth}
        \centering
        \includegraphics[width=\linewidth]{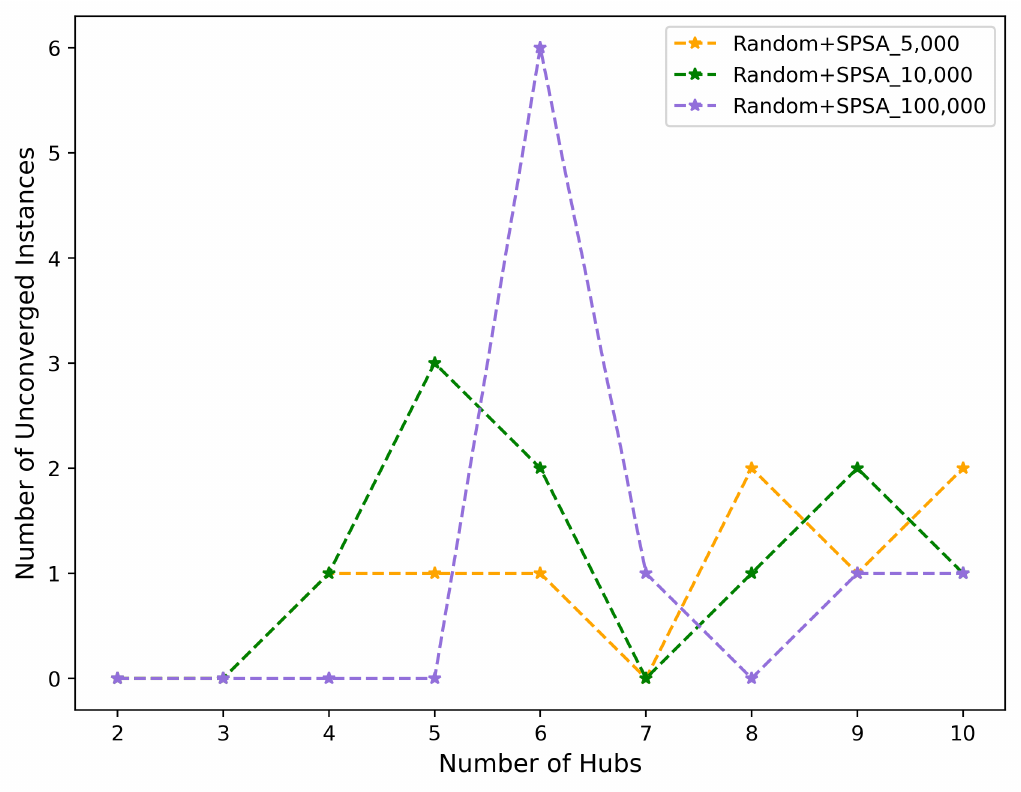}
        \caption{ }
        \label{CMOV:b}
    \end{subfigure}
    \caption{(a) Comparison of Mean Objective Value Differences Relative to the Base (Benders + SPSA\_10,000) and (b) Comparison of the Number of Unconverged Instances with 20 Repeated Trials}
    \label{CMOV}
\end{figure}

Based on our computational results, SPSA can significantly improve solution performance over the initial solution from the Benders approach. Meanwhile, it is evident that dedicating time to the discretization step is highly worthwhile. Firstly, the Benders approach yields a high-quality initial solution, effectively reducing the number of iterations and ultimately saving significant computation time, particularly when the sample size increases, as shown in Figure \ref{CT_1000} and \ref{CT_10000}. Secondly, leveraging the Benders approach results in the mean objective values of the final solutions from SPSA outperforming those of the Random approach (see Figure \ref{CMOV:a}); even a sample size of 5,000 under the Benders approach surpasses Random’s 100,000 sample size. The law of large numbers (LLN) aligns with our experimental results under the Benders approach, suggesting that a larger sample size leads to a higher-quality solution. It is worth noting that when $n = 5$, the result of Benders’ 5,000 sample size has a higher objective function value than the initial solution.  Smaller sample sizes lead to higher objective function approximation errors, which can be remedied by using larger sample sizes.   Therefore, we recommend using the largest possible sample size in practical applications to minimize the impact of approximation errors on the solutions. On the contrary, the LLN is not reflected in the results under the random approach (e.g., $n =$ 5, 8, and 10) due to the uncertainty in the quality of the random initial solutions. Additionally, such uncertainty also explains why solutions that do not converge could not be entirely avoided with the Random approach (see Figure \ref{CMOV:b}). 

In summary, the results from the Benders approach exhibit better quality and stability compared to those achieved from the Random approach. These findings clearly highlight the effectiveness of allocating resources to the discretization step, ultimately yielding superior solution quality and stability in the results. The final location results by ``Benders+SPSA-100,000" are shown in Figure \ref{finallocation}.

\begin{figure}[htbp]
\centering
\includegraphics[width=1\textwidth]{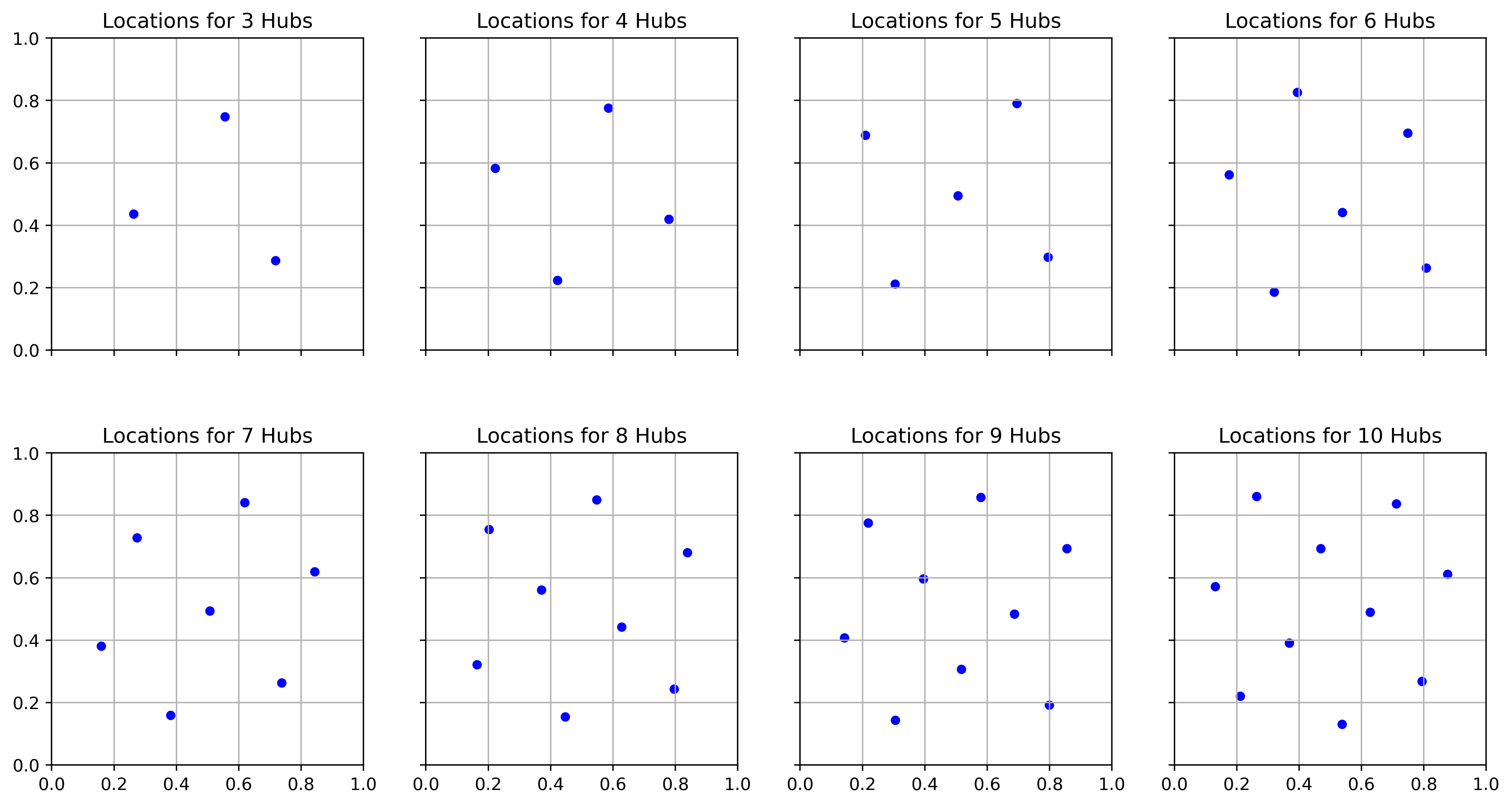}
\caption{Final Location Results for $n=3,\ldots,10$
}
\label{finallocation}
\end{figure}

\section{Two-Dimensional Multiple-Service-Provider Setting}

\label{sec:multi_service}
In real-life scenarios, it is common for multiple service providers to be available for a single customer rather than a one-to-one provider-customer relationship. For instance, when a customer requests a food delivery, multiple drivers may be simultaneously available on the map. However, the platform's assignment system typically selects only one driver with the shortest delivery distance. Therefore, for practical applicability, we extend our problem to encompass the setting of multiple service providers.

Suppose we have $W$ providers simultaneously. Let $\mathbf{X}$ denote the location of a random customer and $\mathbf{Y}_w, w\in [W]$, denote the locations of $W$ random service providers. Then, the extended problem becomes
\begin{equation*}    \min_{\mathbf{p}_1,\ldots,\mathbf{p}_n}E\left[\min_{i \in [n] ,w \in [W]}\Big\{\|\mathbf{X}-\mathbf{p}_i\|_1+\|\mathbf{Y}_w-\mathbf{p}_i\|_1\Big\}\right].
\end{equation*}

When we apply quadrature to approximate the extended problem, the ``Curse of Dimensionality" becomes a major issue. For instance, if we use a $10\times10$ grid for the continuous customer and provider spaces as in Section \ref{discrete_subs}, the total number of demand pairs constructed before reduction is $11^{2(W+1)}$. The approximation model is intractable even when $W=2$. This issue aligns with the fact that quadrature methods have precision of $O(\alpha^{-1/\beta})$ \citep{dunn2022exploring}, where $\alpha$ is the sample size and $\beta$ is the dimension of the integral. Therefore, to alleviate the Curse of Dimensionality, we opt for Monte Carlo integration over quadrature to approximate the expectation. The approximation error for Monte Carlo is $O(\alpha^{-1/2})$ \citep{dunn2022exploring}, which is independent of the dimension and thus well-suited for higher-dimensional problems.

We notice that the discrete version of the extended problem approximated by the Monte Carlo method can also be reformulated as a $p$-median problem, which enables our proposed simulation-based approximation method. The reasoning is as follows. If we have generated $J'$ samples (indexed by $j$), that is, $\mathbf{x}_j,\mathbf{y}_{j1},\ldots,\mathbf{y}_{jW}$, the expected objective function of the extended problem can be approximated by 
\begin{equation*}
    \frac{1}{J'}\sum_{j=1}^{J'}\ \min_{i \in [n],w\in [W]}\Big\{\|\mathbf{x}_j-\mathbf{p}_i\|_1+\|\mathbf{y}_{jw}-\mathbf{p}_i\|_1\Big\}.
    \nonumber
\end{equation*}
Let $g_{ji} =\min_{w\in[W]}\Big\{\|\mathbf{x}_j-\mathbf{p}_i\|_1+\|\mathbf{y}_{jw}-\mathbf{p}_i\|_1\Big\}$. The approximation of the extended problem can be reformulated as 
\begin{equation*}
\min_{\mathbf{p}_1,\ldots,\mathbf{p}_n}\frac{1}{J'}\sum_{j=1}^{J'}\min_{i \in[n]}g_{ji}.
\end{equation*}
Therefore, the discrete version of our approximated extended problem can be reformulated as a $p$-median problem.

\section{Case Study}
\label{sec:Case Study}
Our case study focuses on a $0.7 \times 0.7$ square mile region of Virginia Beach, highlighted by a red square on the map in Figure \ref{case1}. This area, a major tourist destination in Virginia Beach, attracts a significant number of visitors each year, resulting in an elevated OHCA incidence. The size of the selected area is well-suited for studying AED delivery via smartphone apps, as these apps are designed to fill the critical time gap before an ambulance arrives. Additionally, this region's commercial and tourism characteristics align with the Manhattan metric, making it an ideal distance measurement for our analysis.

\begin{figure}[htbp]
\centering
\includegraphics[width=1\textwidth]{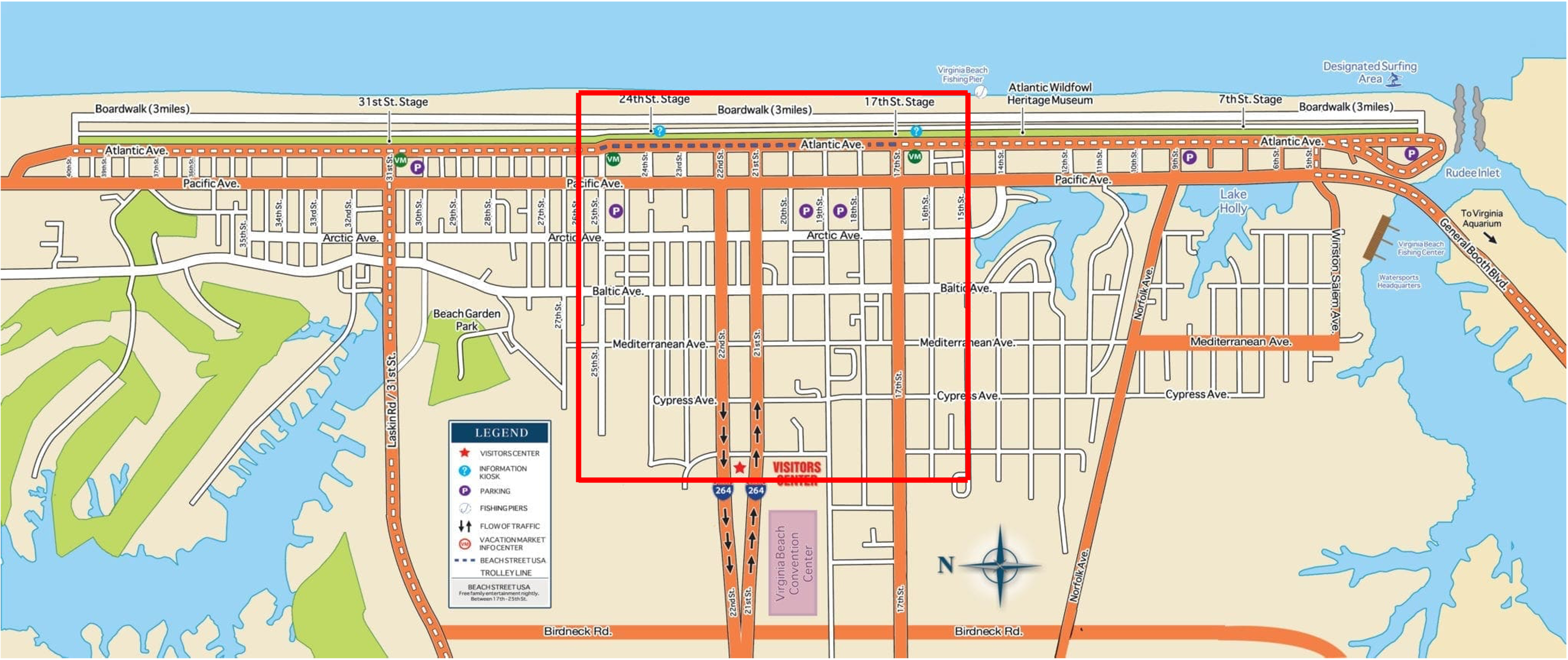}
\caption{ Case Study Area (Source: https://virginiabeach.guide/virtual-visitors-guide) }
\label{case1}
\end{figure}

\subsection{Case Study Setup}

\subsubsection{Spatial Distribution of OHCAs} \label{sohca}

We obtained historical OHCA data from the ``Virginia Beach OHCA Response Data Set” (VBOHCARDS) \citep{custodio2022spatiotemporal}. The VBOHCARDS recorded all the reported OHCA locations from January 1, 2017 to June 30, 2019. A total of 46 OHCA incidents that happened in the case study area were collected. Figure \ref{OHCA} illustrates the distribution of these incidents, indicating a concentration along the oceanfront and boardwalk. This data set serves as a valuable resource for our analysis of OHCA occurrences in the specified area.

\begin{figure}[htbp]
\centering
\includegraphics[width=1\textwidth]{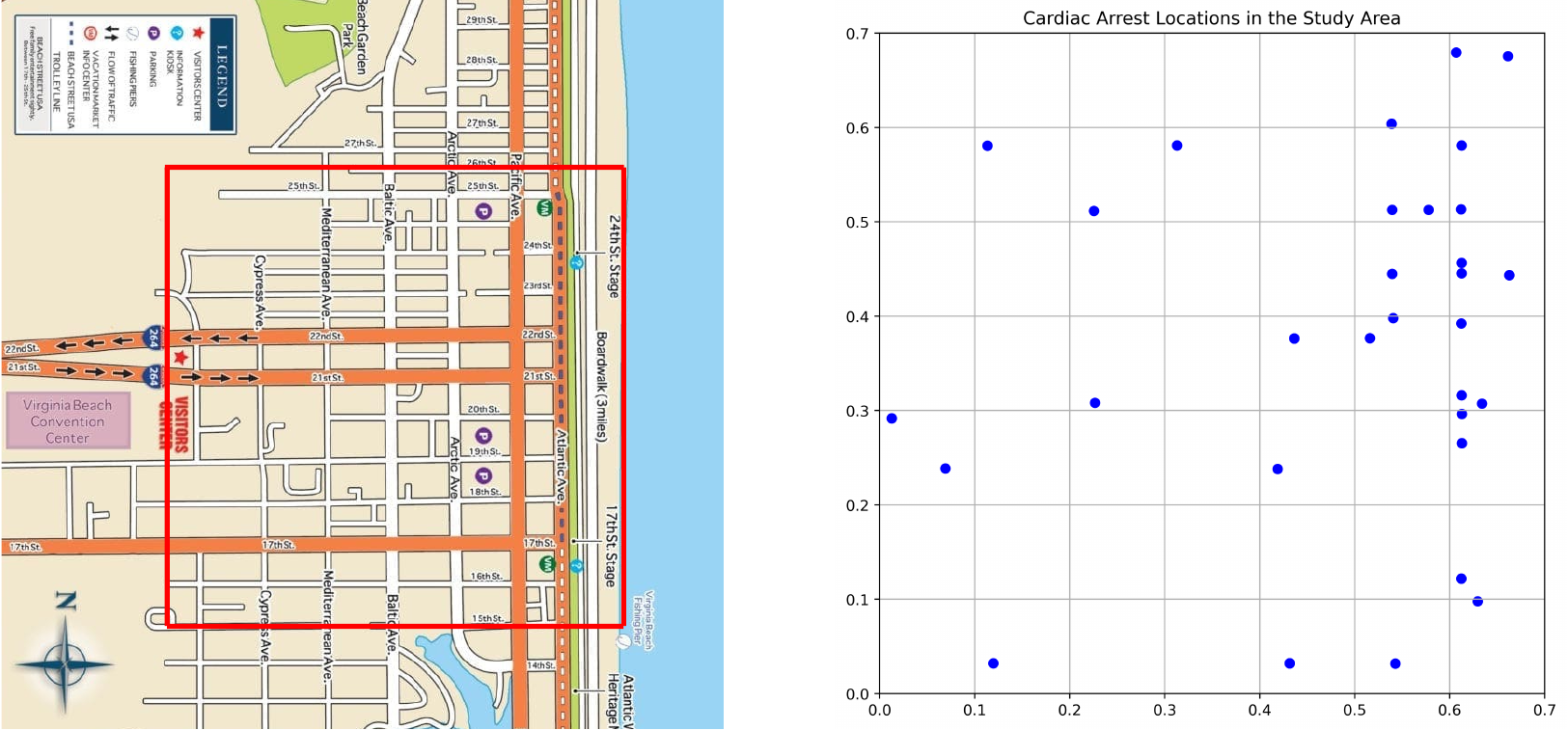}
\caption{Historical OHCA Data in the Case Study Area }
\label{OHCA}
\end{figure}

As shown in Figure \ref{KDE_1}, the spatial distribution of OHCA is estimated using kernel density estimation (KDE), following the procedure discussed in \citet{chan2016optimizing}. Additionally, a grid version of the KDE map is constructed (see Figure \ref{KDE_2}). The weight of each cell is calculated by averaging the density values within it, representing the probability of an OHCA incident occurring at the cell's center. It is worth noting that the difference in the probability distribution is solely reflected in the weight and does not affect the computational complexity. Consequently, we could either sample directly from the spatial distribution by the Monte Carlo method or utilize the grid weight map and quadrature for numerical integration.


\begin{figure}[htbp]
    \centering
    \begin{subfigure}{0.45\textwidth}
        \centering
        \includegraphics[width=\linewidth]{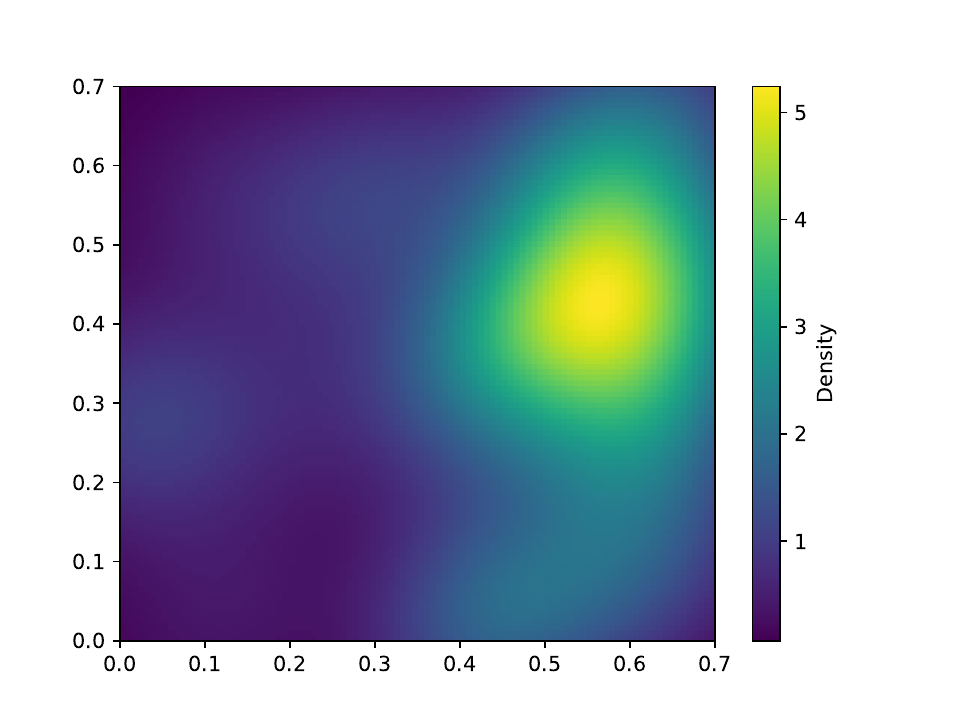}
        \caption{KDE Map}
        \label{KDE_1}
    \end{subfigure}
    \hspace{1em}
    \begin{subfigure}{0.45\textwidth}
        \centering
        \includegraphics[width=\linewidth]{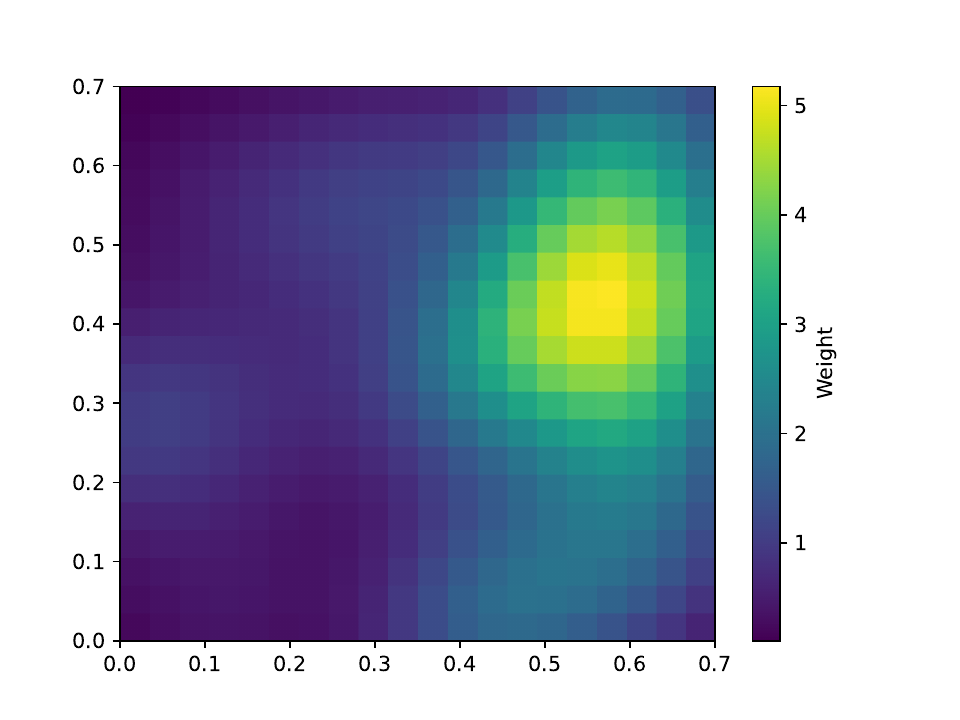}
        \caption{$20\times20$ Grid Weight Map}
        \label{KDE_2}
    \end{subfigure}
    \caption{Illustration of KDE and Grid Weight Maps}
    \label{KDE}
\end{figure}

\subsubsection{ Spatial Distribution of Volunteers}
\label{spatial}

Constructing the spatial distribution of volunteers presents significant challenges because of the dynamic and unpredictable nature of human movements. In this case study, we approximately characterize the spatial distribution of volunteers by dividing the study area into four zones. As shown in Figure \ref{Zone}, these zones are the Old Beach District (Z1), Arterial Traffic Zone (Z2), ViBe Creative District (Z3), and Virginia Beach Oceanfront (Z4), which match the existing district geography and layout.  

Assuming the distribution of a volunteer is uniform throughout a zone, we can estimate the likelihood of being in a zone by considering daily traffic volumes within zones. Specifically, we use Annual Average Daily Traffic (AADT) data \citep{AADT} to obtain a rough estimate of the population distribution. As depicted in Figure \ref{Zone}, the blue dots indicate the locations of AADT sensors, and the number in each zone corresponds to its average AADT value. By normalizing these values, we can obtain the probability of the volunteer appearing in each zone. Then, we construct a $20\times20$ grid weight map in the same way as in Section \ref{sohca}.

\begin{figure}[htbp]
\centering
\includegraphics[width=0.4\textwidth]{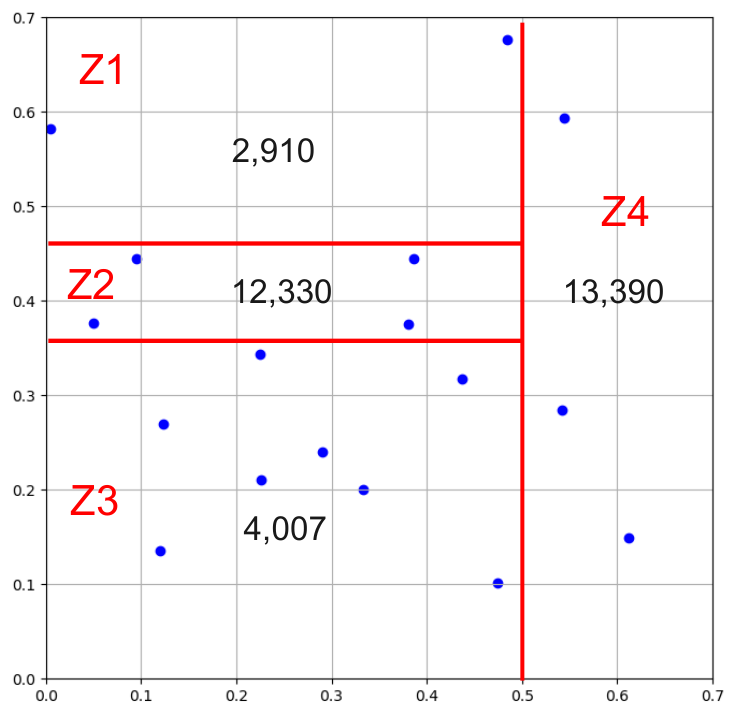}
\caption{Zones and Their Average AADT Values}
\label{Zone}
\end{figure}

\subsubsection{Experimental Design}

We conducted experiments to investigate the impact of the number of AEDs and the number of volunteers on the mean delivery distance (i.e., total travel distance). For numerical integration in the continuous volunteer and victim spaces, we utilize quadrature based on grid weight maps for the single-volunteer case and Monte Carlo integration with 3,000 samples for multiple-volunteer cases. Moreover, we use a $20\times20$ grid for the continuous solution space, where each cell's center is a potential hub location.  For the SPSA step, we choose 10,000 as the sample size for each iteration, while adhering to the settings outlined in Section \ref{SPSAsetting}. Additionally, we construct the mean delivery distance contour map based on the mean distance value of 1,000 samples at each location to visualize the spatial distribution of delivery distances.

\subsection{Computational Results}

\subsubsection{Numbers of AEDs and Volunteers}

Figure \ref{Obj_compare:1} illustrates the impact of varying the numbers of AEDs and volunteers on the mean delivery distance. Across all volunteer scenarios (i.e., 1, 5, 10, and 15 volunteers), the mean delivery distance decreases as the number of AEDs increases. This indicates that more AEDs reduce the distance volunteers need to travel to deliver them, which is expected. However, the reduction in the mean delivery distance becomes less pronounced as the number of AEDs increases, especially beyond five AEDs. This effect suggests diminishing returns with the addition of more AEDs. This decreasing marginal benefit trend also occurs as the number of volunteers increases. For a single volunteer, the mean delivery distance starts at around 0.58 miles with one AED and gradually decreases to just above 0.44 miles with nine AEDs. When more volunteers are involved, the mean delivery distance is significantly reduced. For instance, with five volunteers, the distance starts at around 0.41 miles and decreases to around 0.25 miles when the number of AEDs reaches nine. For the scenario with five AEDs, as shown in Figure \ref{Obj_compare:2}, diminishing returns with respect to the number of volunteers can also be observed. After reaching five volunteers, the rate of decrease in delivery distance slows down. When the number of volunteers increases from 10 to 40, the reduction in the mean delivery distance becomes marginal. 


\begin{figure}[htbp]
    \centering
    \begin{subfigure}{0.45\textwidth}
        \centering
        \includegraphics[width=\linewidth]{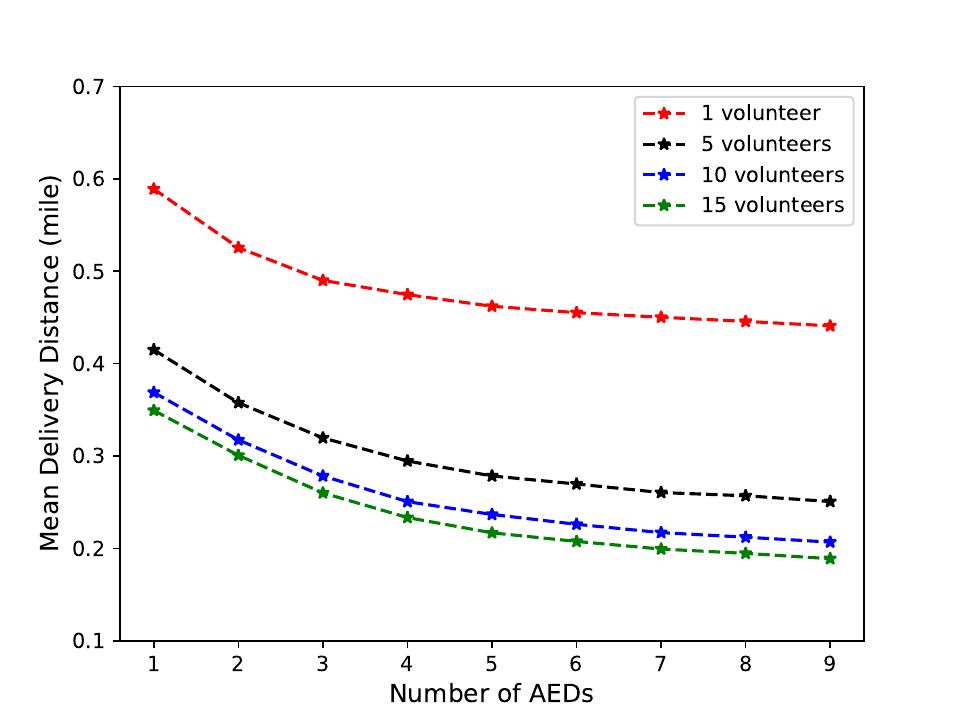}
        \caption{ }
        \label{Obj_compare:1}
    \end{subfigure}
    \hspace{1em}
    \begin{subfigure}{0.45\textwidth}
        \centering
        \includegraphics[width=\linewidth]{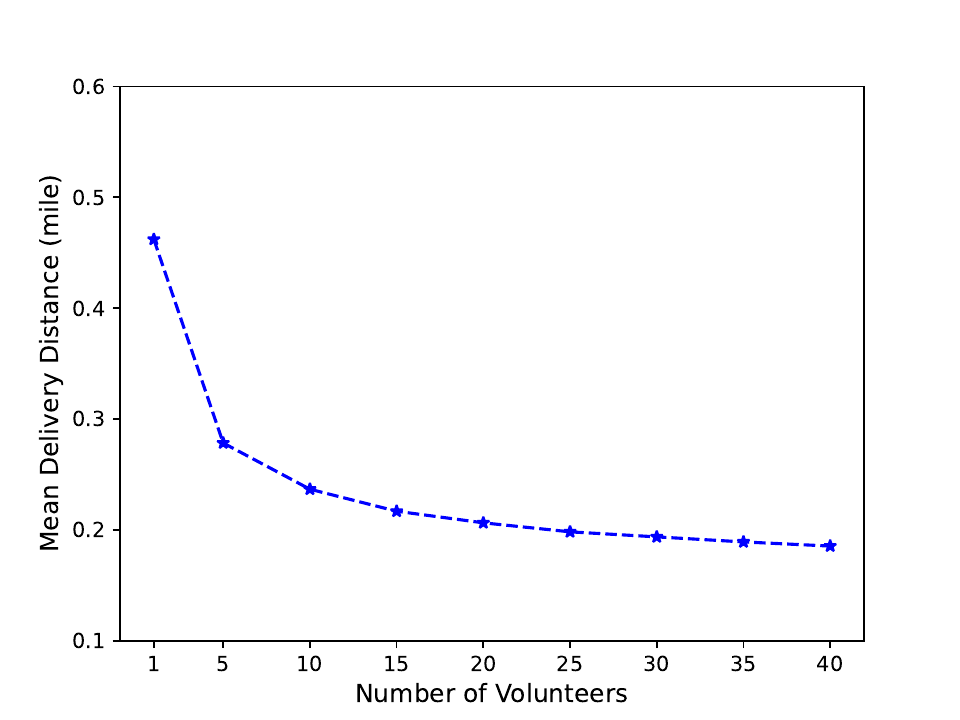}
        \caption{ }
        \label{Obj_compare:2}
    \end{subfigure}
    \caption{Mean Delivery Distance Comparison: (a) Impact of the Number of AEDs and (b) Impact of the Number of Volunteers with Five AEDs}
    \label{Obj_compare}
\end{figure}

\subsubsection{Mean Delivery Distance Contour Map}

Figure \ref{contour} illustrates the mean delivery distance contour map with five AEDs and ten volunteers. The red stars mark the locations of the five AEDs generated by our approximation method, while the color scale on the side represents the range of the mean delivery distance. The AED placements effectively cover those regions with a high number of OHCA incidents and population density. Additionally, the contour map reveals concentric diamond patterns around each AED, with the mean delivery distance increasing as one moves away from an AED. These diamond patterns arise from our utilization of the Manhattan metric.

\begin{figure}[htbp]
\centering
\includegraphics[width=0.5\textwidth]{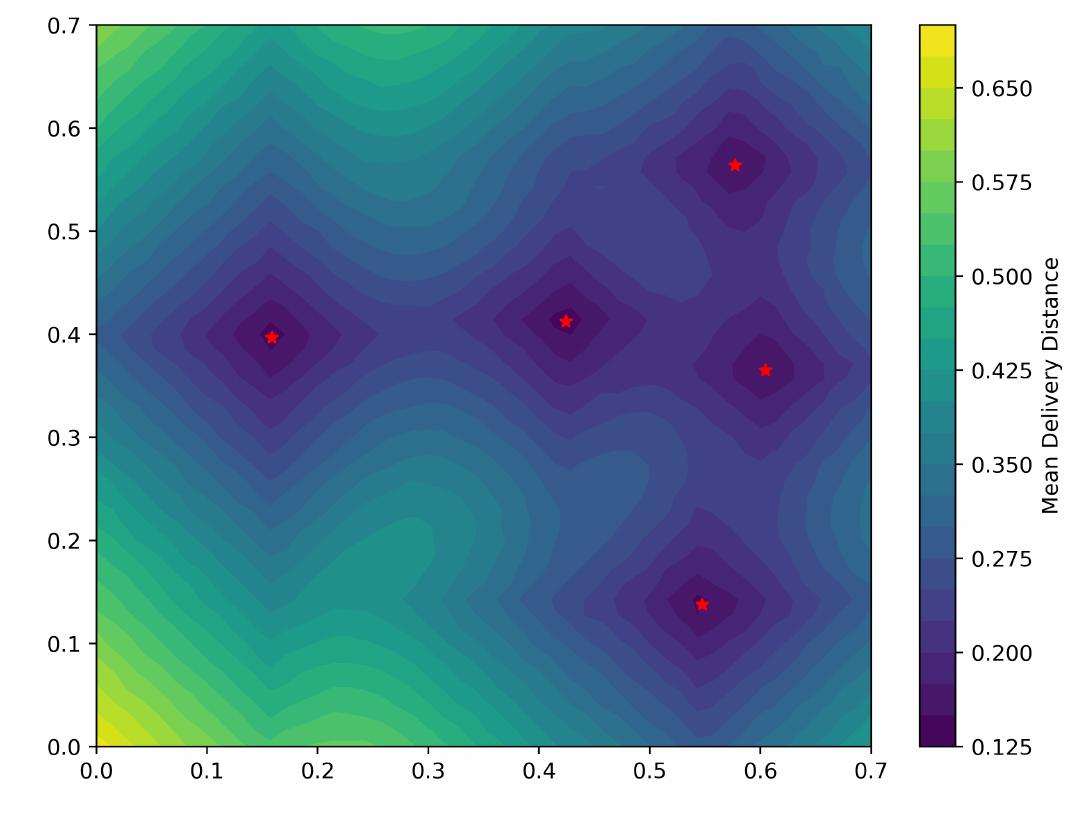}
\caption{ Mean Delivery Distance Contour Map for Five AEDs and Ten Volunteers}
\label{contour}
\end{figure}

\subsection{Sensitivity Analysis}

To evaluate the robustness of our solutions, we conduct a sensitivity analysis by varying the number of volunteers and their spatial distribution.

\subsubsection{Number of Volunteers}

Figure \ref{SA_Volunteer} shows the resulting AED locations with different numbers of volunteers. The relatively tight cluster of points within each group indicates the solutions are stable for the multi-volunteer cases. One exception outside the cluster is the single-volunteer case, marked as black. The resulting outlier phenomenon is consistent with the impact result shown in Figure \ref{Obj_compare:1}.
\begin{figure}[htbp]
\centering
\includegraphics[width=0.45\textwidth]{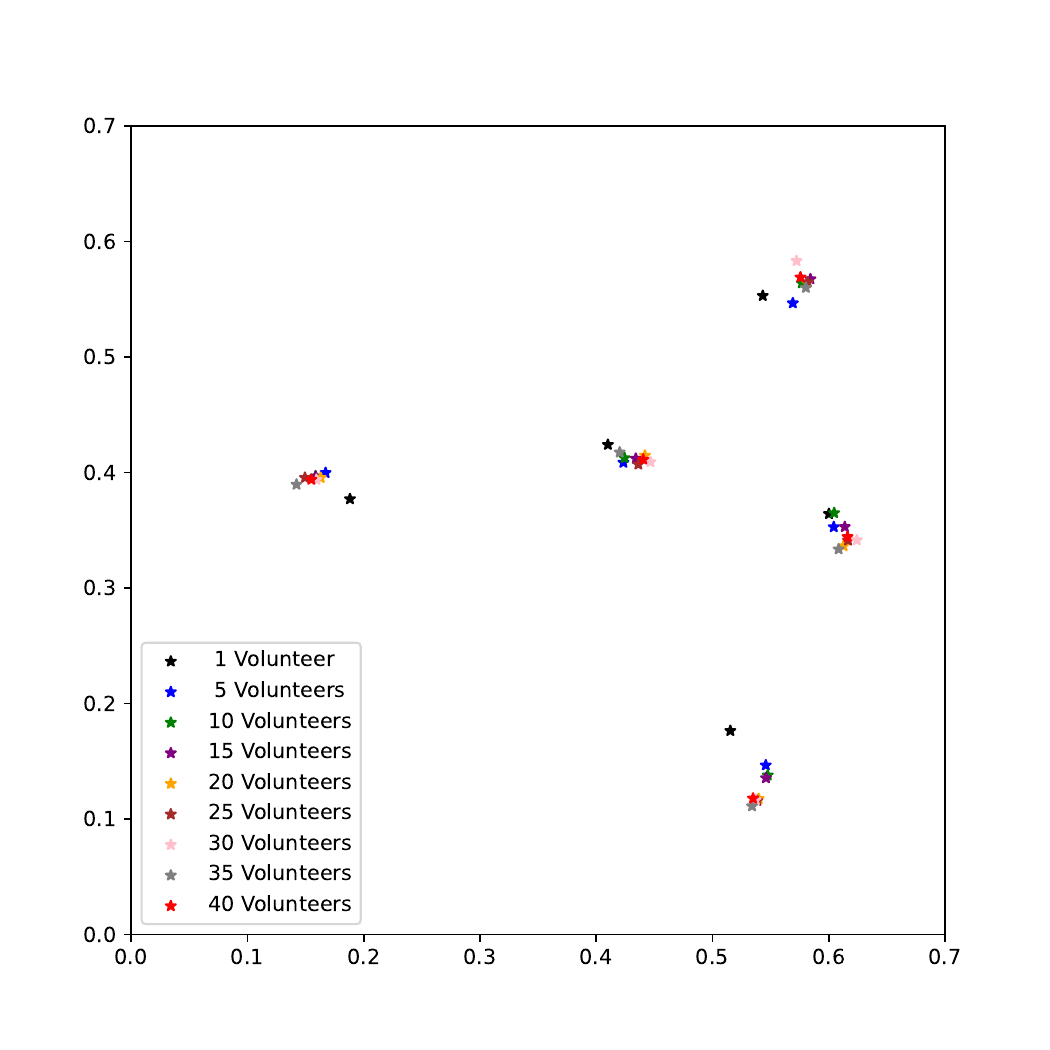}
\caption{Sensitivity Analysis of the Number of Volunteers with Five AEDs}
\label{SA_Volunteer}
\end{figure}

\subsubsection{Volunteer Distributions}

We obtained the average AADT values $[2,910, 12,330, 4,007, 13,390]$ for Zones Z1, Z2, Z3, and Z4, respectively, to estimate each zone's population in Section \ref{spatial}. Here, we use these values as a benchmark and adjust them to determine how they affect the obtained solutions. Specifically, each zone's value is adjusted up or down by some percentage, and the value change is proportionately distributed to the other three zones according to their benchmark values. We test 10\% and 50\% changes, and generate eight scenarios for each, as detailed in Table \ref{scenarios}.

\begin{table}[h!]
\centering
\caption{Sensitivity Analysis Scenarios for Volunteer Distributions}
\label{scenarios}
\begin{tabular}{@{}lcc@{}}
\toprule
Scenario & 10\% Change                     & 50\% Change                     \\ 
\midrule
1        & [3,201, 12,209, 3,968, 13,259] & [4,365, 11,727, 3,811, 12,735] \\ 
2        & [2,619, 12,451, 4,046, 13,521] & [1,455, 12,933, 4,203, 14,045] \\ 
3        & [2,733, 13,563, 3,764, 12,577] & [2,027, 18,495, 2,791, 9,325]  \\ 
4        & [3,087, 11,097, 4,250, 14,203] & [3,793, 6,165, 5,223, 17,455]  \\ 
5        & [2,869, 12,157, 4,408, 13,203] & [2,706, 11,467, 6,010, 12,453] \\ 
6        & [2,951, 12,503, 3,606, 13,577] & [3,114, 13,193, 2,004, 14,327] \\ 
7        & [2,708, 11,472, 3,728, 14,729] & [1,898, 8,041, 2,613, 20,085]  \\ 
8        & [3,112, 13,188, 4,286, 12,051] & [3,922, 16,619, 5,401, 6,695]  \\ 
\bottomrule
\end{tabular}
\end{table}


The resulting AED locations for the different scenarios with five AEDs and ten volunteers are shown in Figure \ref{SA_distribution}. The 10\% population adjustment leads to minimal shifts in the positioning of AEDs across different scenarios (see Figure \ref{SA_distribution:1}). All scenarios are closely clustered around the benchmark locations, indicating that minor adjustments have little effect on the solutions. The 50\% adjustment introduces more deviations from the benchmark AED locations, although they remain clustered within certain regions (see Figure \ref{SA_distribution:2}). This result shows that our solutions are stable and robust to perturbations in the spatial distribution of volunteers, even with somewhat large adjustments. 

The observed robustness can be attributed to the unchanged population density ranks across zones as population volumes change. Specifically, Zones Z2 and Z4 consistently maintain the top two population volumes, meaning these areas continue to be prioritized in the optimization process, regardless of the adjustment scale. As a result, despite population volume changes, the underlying population distribution pattern remains unchanged, leading to minimal impact on AED location decisions and ensuring reliable performance across various scenarios.


\begin{figure}[htbp]
    \centering
    \begin{subfigure}{0.45\textwidth}
        \centering
        \includegraphics[width=\linewidth]{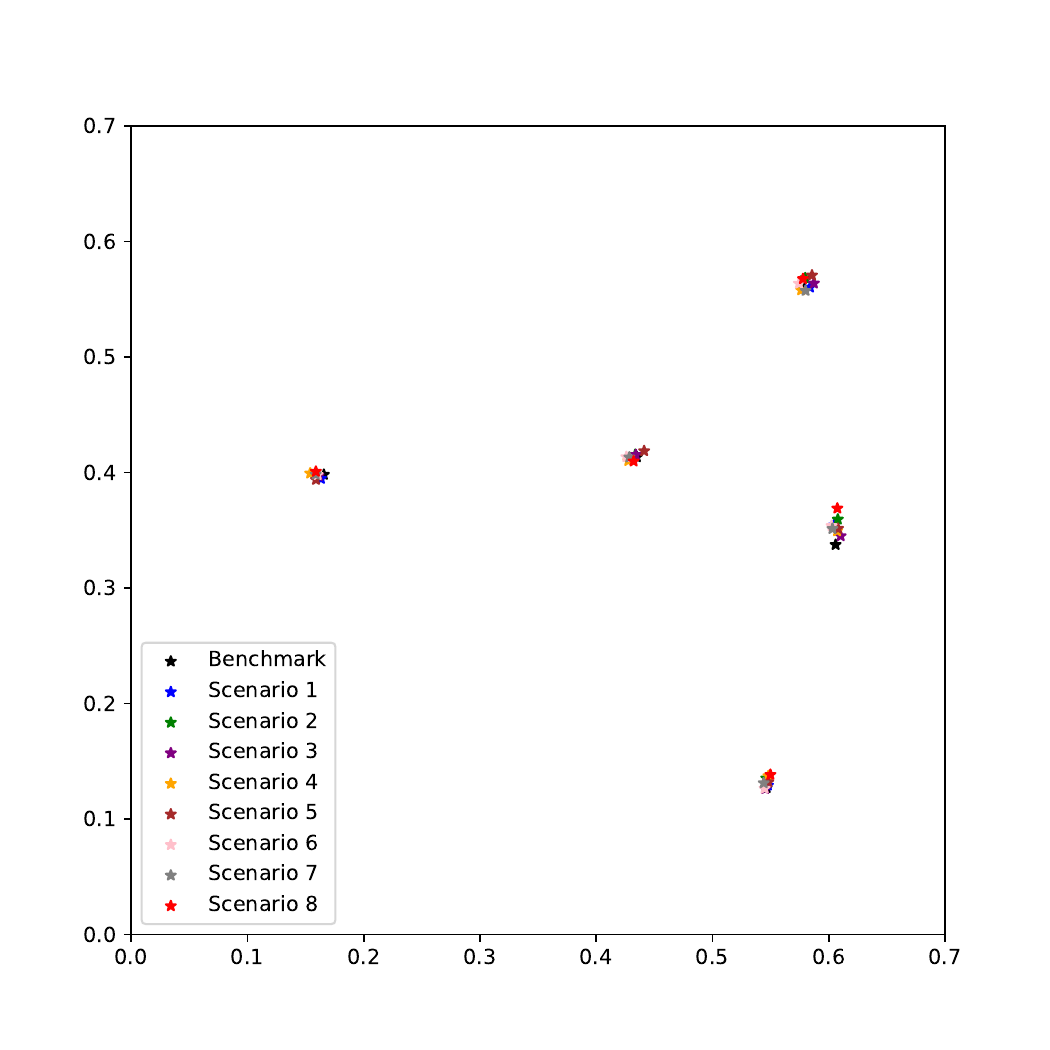}
        \caption{AED Locations for 10\% Adjustment}
        \label{SA_distribution:1}
    \end{subfigure}
    \hspace{1em}
    \begin{subfigure}{0.45\textwidth}
        \centering
        \includegraphics[width=\linewidth]{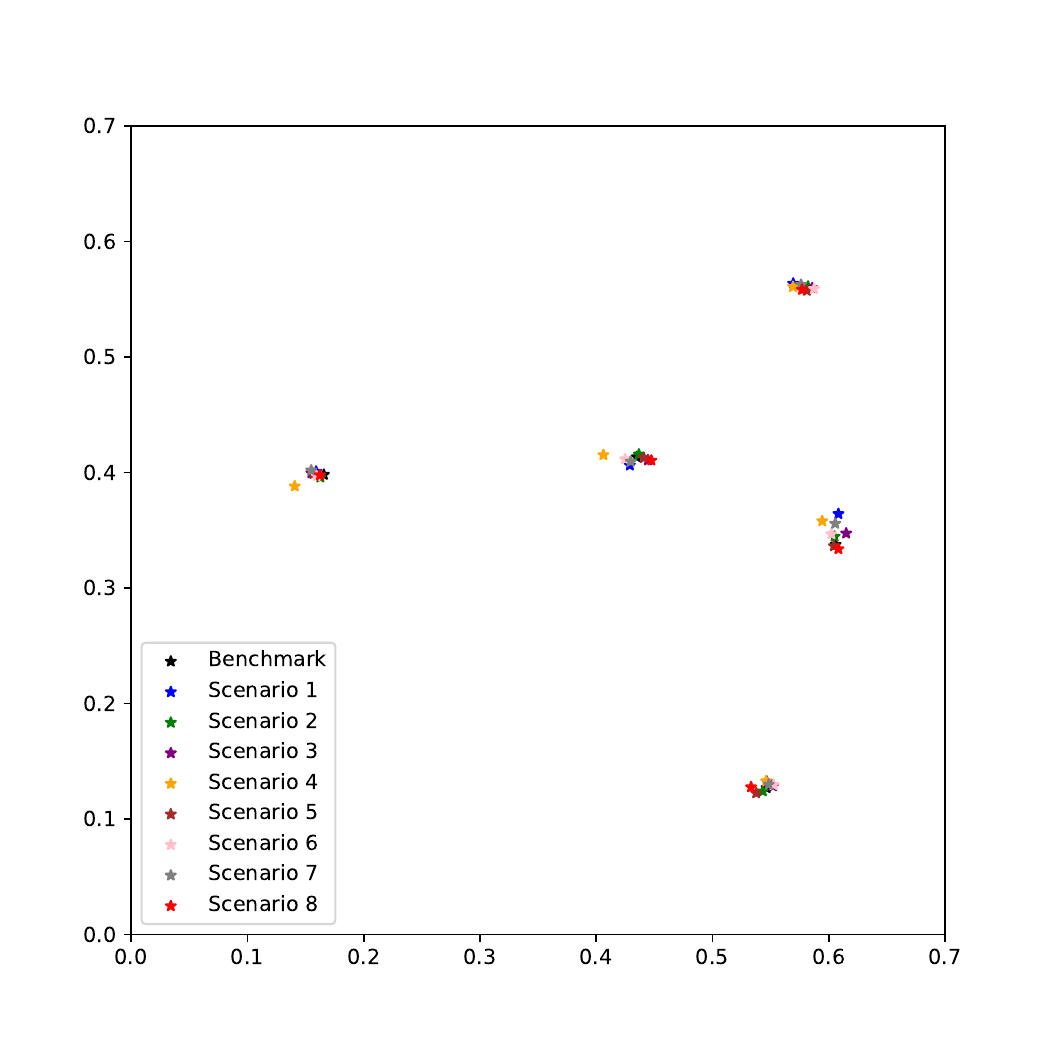}
        \caption{AED Locations for 50\% Adjustment}
        \label{SA_distribution:2}
    \end{subfigure}
    \caption{Sensitivity Analysis of Volunteer Distributions with Five AEDs}
    \label{SA_distribution}
\end{figure}

\subsection{Discussion}

Based on the computational results, the importance of volunteers should be emphasized. While increasing the number of AEDs is beneficial, the most significant improvements in delivery distance are associated with an increase in the number of volunteers. Our observations reveal that even with just one AED installed, having five volunteers results in a smaller mean delivery distance compared to nine AEDs with only one volunteer. This insight suggests that recruiting more volunteers can have a greater impact than solely increasing AEDs, especially in areas with limited volunteer availability. Increasing the number of volunteers can be achieved through enhanced community outreach and education. Furthermore, the diminishing returns observed with an increase in the number of AEDs or volunteers indicate that an optimal combination of these resources may exist, taking into account the associated socioeconomic costs. In practice, there may be ideal thresholds beyond which additional effort may not substantially improve response efficiency. The sensitivity analysis results demonstrate that our approximation method is robust, making it suitable for algorithm deployment. By estimating the number of volunteers roughly instead of providing an exact number, we can effectively achieve our objective. When it comes to the spatial distribution of volunteers, dividing the area appropriately and providing the population density ranks suffice, even in the absence of accurate population census data. Such stability yields substantial practical benefits.

\section{Conclusions and Future Research}\label{sec:Conclusion}

In this paper, we revisited continuous $p$-hub location problems under the $\ell^1$ metric, where a random service provider will serve a random customer via one hub. For the one-dimensional case, an analytical expression for the objective function is derived by an innovative integration method. The core basis of this method is in determining the integration ranges within which a specific hub will be utilized. Based on the analytical expression, we demonstrate the convexity of the problem and obtain a closed-form solution for the optimal locations. We extend the integration method to the two-dimensional cases, and obtain analytical expressions for up to two hubs. The exponential growth in the number of integration area divisions makes the direct integration method challenging for the general case of $n$ hubs. Therefore, we propose a two-step approximation method based on simulation optimization. After discretization, our problem can be transformed into a large-scale $p$-median problem, which can be solved efficiently using Benders decomposition. SPSA is then utilized to improve the solution quality further in the continuous solution space. We also demonstrate that the approximation method can be applied to the problem with multiple providers. We implement our methodology for deploying public-access AEDs in Virginia Beach. The findings suggest that, while increasing the number of AEDs is beneficial, we should emphasize public education to recruit a sufficient number of volunteers.

We propose two potential directions for future work. Firstly, although our approximation method can be easily extended to other metrics, our closed-form derivations are only applicable to the $\ell^1$ metric. Exploring the shapes of integration regions and properties under the $\ell^2$ metric represents an important direction for future investigation. Research on the $\ell^2$ metric provides an alternate perspective, encompassing the Euclidean distance and offering a more common measure of real-world travel.  Additionally, our study focuses on squares or rectangles, representing regularly shaped regions without internal barriers. While this offers a clean, simplified environment for analysis, it may not be fully representative of practical applications, where obstacles such as buildings, parks, or restricted zones often prohibit building facilities. Future work could incorporate barriers within the study area, providing critical insights into how barriers influence optimal hub placement. Addressing these two directions would enhance the practical applicability of our models and methodologies. 

\bibliographystyle{trb}
\bibliography{references.bib}

\newpage

\section{Appendix}
\setcounter{equation}{0}
\setcounter{figure}{0}
\renewcommand{\theequation}{A.\arabic{equation}}
\renewcommand{\thefigure}{A.\arabic{figure}}

\subsection{Proofs of Propositions and Corollaries}

\label{Appendix:Proposition}

\begin{proof}{\bf{Proof of Proposition \ref{p1}}.}

For any $k \in [n-1]$ and $i \in [n-k]$, we want to prove that if $0 \le x \le p_k$ and $0 \le y \le 1$, we have $|x-p_k|+|y-p_k| \le |x-p_{k+i}|+|y-p_{k+i}|,$ that is, $|y-p_k|-|y-p_{k+i}| \le |x-p_{k+i}|-|x-p_k|.$ Moreover, if $0 \le x \le p_k,$ we have $|x-p_{k+i}|-|x-p_k| = p_{k+i} - x - (p_k - x) = p_{k+i} - p_{k}$. So, we want to prove $|y-p_k|-|y-p_{k+i}| \le p_{k+i} - p_{k}.$

We have the following three cases:

\noindent Case 1. $0 \le x \le p_k$ and $0 \le y < p_{k}.$

We have $|y-p_k|-|y-p_{k+i}| = p_k - y - (p_{k+i} - y) = p_k - p_{k+i}$. Therefore,  $|y-p_k|-|y-p_{k+i}| - (p_{k+i} - p_{k}) =  
p_k - p_{k+i} - p_{k+i} + p_k = 2(p_k - p_{k+i}) < 0$.

\noindent Case 2. $0 \le x \le p_k$ and $p_k \le y < p_{k+i}.$

We have $|y-p_k|-|y-p_{k+i}| = y - p_k - (p_{k+i} - y) = 2y - p_k - p_{k+i}$. Therefore,  $|y-p_k|-|y-p_{k+i}| - (p_{k+i} - p_{k}) = 2y - p_k - p_{k+i} - p_{k+i} + p_k = 2(y - p_{k+i}) < 0$.

\noindent Case 3. $0 \le x \le p_k$ and $p_{k+i} \le y \le 1.$

We have $|y-p_k|-|y-p_{k+i}| = y-p_k - (y-p_{k+i}) = p_{k+i} -p_k $. Therefore,  $|y-p_k|-|y-p_{k+i}| - (p_{k+i} - p_{k}) = p_{k+i} -p_k - p_{k+i} + p_k = 0$.  \end{proof}

\begin{proof}{\bf{Proof of Corollary \ref{cor1}}.}

 By setting $k=1$ in Proposition \ref{p1}, we have $d_{1} \le d_{1+i}$ for all $i \in [n-1]$, that is, $d_1 \leq d_2, d_1 \leq d_3, \ldots, d_1 \leq d_n.$ Therefore,  $d_1  = \min_{i \in [n]} d_{i}$.  \end{proof}

\begin{proof}{\bf{Proof of Proposition \ref{p2}}.}
 
For any $k \in \{2,\ldots,n\}$ and $i \in [k-1]$, we want to prove that if $p_k \le x \le 1$ and $0 \le y \le 1$, we have $|x-p_k|+|y-p_k| \le |x-p_{k-i}|+|y-p_{k-i}|,$ that is, $|y-p_k|-|y-p_{k-i}| \le |x-p_{k-i}|-|x-p_k|.$ Moreover, if $p_k \le x \le 1,$ we have $|x-p_{k-i}|-|x-p_k| = x - p_{k-i} - (x - p_k) = p_k - p_{k-i}$. So, we want to prove $|y-p_k|-|y-p_{k-i}| \le p_k - p_{k-i}.$

We have the following three cases:

\noindent Case 1. $p_k \le x \le 1$ and $0 \le y < p_{k-i}.$

We have $|y-p_k|-|y-p_{k-i}| = p_k - y - (p_{k-i} - y) = p_k - p_{k-i}$. Therefore,  $|y-p_k|-|y-p_{k-i}| - (p_k - p_{k-i}) = p_k - p_{k-i} - p_k + p_{k-i} = 0$.

\noindent Case 2. $p_k \le x \le 1$ and $p_{k-i} \le y < p_k.$

We have $|y-p_k|-|y-p_{k-i}| =  p_k - y - (y - p_{k-i}) = p_k + p_{k-i} - 2y$. Therefore,  $|y-p_k|-|y-p_{k-i}| - (p_k - p_{k-i}) = p_k + p_{k-i} - 2y - p_k + p_{k-i} = 2(p_{k-i}-y) \le 0$.

\noindent Case 3. $p_k \le x \le 1$ and $p_k \le y \le 1.$

We have $|y-p_k|-|y-p_{k-i}| = y-p_k - (y-p_{k-i}) = p_{k-i} -p_k $. Therefore,  $|y-p_k|-|y-p_{k-i}| - (p_k - p_{k-i}) = p_{k-i} -p_k - p_k + p_{k-i} = 2(p_{k-i} - p_k) < 0$.   \end{proof}

\begin{proof}{\bf{Proof of Corollary \ref{cor2}}.}

By setting $k=n$ in Proposition \ref{p2}, we have $d_{n} \le d_{n-i}$ for all $i \in [n-1]$, that is, $d_n \leq d_{n-1}, d_n \leq d_{n-2}, \ldots, d_n \leq d_1.$ Therefore,  $d_n  = \min_{i \in [n]} d_{i}$.   \end{proof}

\begin{proof}{\bf{Proof of Proposition \ref{p3}}.}

For any $k \in [n-1]$, we want to prove that if $p_k \le x \le p_{k+1}$ and $0 \le y \le p_k + p_{k+1} -x,$ we have $d_k \le d_1, \ldots, d_k \le d_n.$ Since $p_k \le x \le p_{k+1}$, based on Proposition \ref{p1} and Proposition \ref{p2}, it suffices to prove that $d_k \le d_{k+1}$, that is, $|x-p_k|+|y-p_k| \le |x-p_{k+1}|+|y-p_{k+1}|$. The inequality is equivalent to $|y-p_k|-|y-p_{k+1}| \le |x-p_{k+1}|-|x-p_k|.$ Moreover, if $p_k \le x \le p_{k+1},$ we have $|x-p_{k+1}|-|x-p_k| = p_{k+1} - x - (x - p_k) = p_{k+1} + p_k -2x$. So, we want to prove $|y-p_k|-|y-p_{k+1}| \le p_{k+1} + p_k -2x.$

We have the following two cases:

\noindent Case 1. $p_k \le x \le p_{k+1}$ and $0 \le y < p_k.$

We have $|y-p_k|-|y-p_{k+1}| = p_k - y - (p_{k+1} - y) = p_k - p_{k+1}$. Therefore,  $|y-p_k|-|y-p_{k+1}| - (p_{k+1} + p_k -2x) =  
p_k - p_{k+1} - p_{k+1} - p_k + 2x = 2(x-p_{k+1}) \le 0$.

\noindent Case 2. $p_k \le x \le p_{k+1}$ and $p_k \le y \le p_k + p_{k+1} - x.$

We have $|y-p_k|-|y-p_{k+1}| = y - p_k - (p_{k+1} - y) = 2y - p_k - p_{k+1}$. Therefore, $|y-p_k|-|y-p_{k+1}| - (p_{k+1} + p_k -2x) =  
2y - p_k - p_{k+1} - p_{k+1} - p_k + 2x = 2(x+y-p_k-p_{k+1}) \le 0$.  \end{proof}

\begin{proof}{\bf{Proof of Proposition \ref{p4}}.}

For any $k \in [n-1]$, we want to prove that if $p_k \le x \le p_{k+1}$ and $p_k + p_{k+1} - x \le y \le 1,$ we have $d_{k+1} \le d_1, \ldots, d_{k+1} \le d_n.$ Since $p_k \le x \le p_{k+1}$, based on Proposition \ref{p1} and Proposition \ref{p2}, it suffices to prove that $d_k \ge d_{k+1}$, that is, $|x-p_k|+|y-p_k| \ge |x-p_{k+1}|+|y-p_{k+1}|$. The inequality is equivalent to $|y-p_k|-|y-p_{k+1}| \ge |x-p_{k+1}|-|x-p_k|= p_{k+1} + p_k -2x.$

We have the following two cases:

\noindent Case 1. $p_k \le x \le p_{k+1}$ and $p_k + p_{k+1} - x \le y < p_{k+1}.$

We have $|y-p_k|-|y-p_{k+1}| = y - p_k - (p_{k+1} - y) = 2y - p_k - p_{k+1}$. Therefore, $|y-p_k|-|y-p_{k+1}| - (p_{k+1} + p_k -2x) =  
2y - p_k - p_{k+1}- p_{k+1} - p_k + 2x = 2(x + y - p_k - p_{k+1}) \ge 0$.

\noindent Case 2. $p_k \le x \le p_{k+1}$ and $p_{k+1} \le y \le 1.$

We have $|y-p_k|-|y-p_{k+1}| = y - p_k - (y -p_{k+1}) = p_{k+1} - p_k$. Therefore, $|y-p_k|-|y-p_{k+1}| - (p_{k+1} + p_k - 2x) =  
p_{k+1} - p_k - p_{k+1} - p_k + 2x = 2(x-p_k) \ge 0$.   \end{proof}

\begin{proof}{\bf{Proof of Proposition \ref{2Dp1}}.}

It is equivalent to show that when $0 \leq x_1 \leq p_{11}$, $0 \leq x_2 \leq p_{12}$, $0 \leq y_1 \leq 1$, and $ 0 \leq y_2 \leq 1$, we have $d_1 \le d_2$, that is,
\begin{align}
\label{2d1}
&|x_1-p_{11}|+|x_2-p_{12}|+|y_1-p_{11}|+|y_2-p_{12}| \nonumber
\\& \le |x_1-p_{21}|+|x_2-p_{22}|+|y_1-p_{21}|+|y_2-p_{22}|.
\end{align}
Based on Corollary \ref{cor1} with $n=2$, if $0 \leq x_1 \leq p_{11}$ and $0 \leq y_1 \leq 1$, we have 
\begin{equation}
\label{2d1A}
|x_1-p_{11}|+|y_1-p_{11}| \le |x_1-p_{21}|+|y_1-p_{21}|.
\end{equation}
Similarly, if $0 \leq x_2 \leq p_{12}$ and $0 \leq y_2 \leq 1$, we have
\begin{equation}
\label{2d1B}
|x_2-p_{12}|+|y_2-p_{12}| \le |x_2-p_{22}|+|y_2-p_{22}|.
\end{equation}
Combining Inequality \ref{2d1A} and Inequality \ref{2d1B}, Inequality \ref{2d1} holds. \end{proof}

\begin{proof}{\bf{Proof of Proposition \ref{2Dp2}}.}

It is equivalent to show that when $p_{21} \leq x_1 \leq 1$, $p_{22} \leq x_2 \leq 1$, $0 \leq y_1 \leq 1$, and $ 0 \leq y_2 \leq 1$, we have $d_2 \le d_1$, that is,
\begin{align}
\label{2d2}
&|x_1-p_{21}|+|x_2-p_{22}|+|y_1-p_{21}|+|y_2-p_{22}| \nonumber
\\& \le |x_1-p_{11}|+|x_2-p_{12}|+|y_1-p_{11}|+|y_2-p_{12}|.
\end{align}
Based on Corollary \ref{cor2} with $n=2$, if $p_{21} \leq x_1 \leq 1$ and $0 \leq y_1 \leq 1$, we have 
\begin{equation}
\label{2d2A}
|x_1-p_{21}|+|y_1-p_{21}| \le |x_1-p_{11}|+|y_1-p_{11}|.
\end{equation}
Similarly, if $p_{22} \leq x_2 \leq 1$ and $0 \leq y_2 \leq 1$, we have
\begin{equation}
\label{2d2B}
|x_2-p_{22}|+|y_2-p_{22}| \le |x_2-p_{12}|+|y_2-p_{12}|.
\end{equation}
Combining Inequality \ref{2d2A} and Inequality \ref{2d2B}, Inequality \ref{2d2} holds.  \end{proof}

\subsection{Calculation of $\Delta F_j$ in the One-Dimensional Case}
\label{Appendix:F_j}
Since
\begin{align*}
I_{j-1} &= \int^{p_{j}}_{p_{j-1}} \int^{p_{j-1}+p_{j}-x}_0 d_{j-1} dydx\\
&+\int^{p_{j}}_{p_{j-1}} \int_{p_{j-1}+p_{j}-x}^1 d_{j} \,dy \,dx\\ &=  \int^{p_{j}}_{p_{j-1}}  \left[\int^{p_{j-1}}_0  (x-y)dy  \right.\\ 
&\left.+\int^{p_{j-1}+p_{j}-x}_{p_{j-1}} \,(x+y-2p_{j-1}) \,dy\,\right] dx \\ 
&+\int^{p_{j}}_{p_{j-1}}  \left[\int^{p_j}_{p_{j-1}+p_{j}-x} \,(2 p_{j}-x -y)dy\right.\\
&\left. +\int^{1}_{p_j} \,(y-x) \,dy\,\right] dx \\& = -\frac{2}{3} p_{j-1}^{3} + \frac{2}{3} p_{j}^{3} + p_{j -1}^{2} p_{j} - p_{j-1} p_{j}^{2} \\
&+\frac{1}{2} p_{j-1}^{2} -\frac{1}{2} p_{j}^{2} - \frac{1}{2} p_{j-1}+\frac{1}{2} p_{j}, 
\end{align*}

\begin{align*}
\int_{p_{j}}^1 \int^1_0 d_j \,dy\,dx &= \int_{p_{j}}^1 \left[\int^{p_j}_0  \,(x-y)\,dy \right. \\ & \left. +\int^{1}_{p_j} \,(x+y-2p_j) \,dy\,\right]dx \\ &= -p_{j}^{3} + \frac{5}{2} p_{j}^{2}-\frac{5}{2} p_{j} +1,
\end{align*}

and
\begin{align*}
\int_{p_{j-1}}^1 \int^1_0 d_{j-1} \,dy \,dx &= \int_{p_{j}}^1 \left[\int^{p_{j-1}}_0 \,(x - y)dy \right. \\ & \left.+\int^{1}_{p_{j-1}} \,(x+y-2p_{j-1}) \,dy\,\right]dx \\ &= -p_{j -1}^{3} + \frac{5}{2} p_{j-1}^{2}-\frac{5}{2} p_{j-1}+1,
\end{align*}
we have
\begin{align*}
\Delta F_j  & =  I_{j-1}+\int_{p_{j}}^1 \int^1_0 d_j dydx\\
&-\int_{p_{j-1}}^1 \int^1_0 d_{j-1} \,dy \,dx  \\
 & = \frac{1}{3} p_{j -1}^{3} -\frac{1}{3} p_{j}^{3} + p_{j-1}^{2} p_{j} - p_{j-1} p_{j}^{2} \\
 & -2 p_{j-1}^{2} +2 p_{j}^{2} + 2 p_{j-1}-2 p_{j}. 
\end{align*}

\subsection{Derivation of $\mathbf{H}_n$}

\label{Appendix:H_n}

Since $\frac{\partial F_n}{\partial p_1}  = p_1^2 + 2 p_1 p_2 - p_2^2$, we have $$\frac{\partial^2 F_n}{\partial p_1^2} = 2p_1 + 2p_2 \mbox{ and } \frac{\partial^2 F_n}{\partial p_1 \partial p_2} = 2p_1 - 2p_2.$$
For $2 \le i \le n-1$, since $
\frac{\partial F_n}{\partial p_i} = p_{i-1}^2 - 2p_{i-1}p_i + 2p_i p_{i+1} - p_{i+1}^2,$ we have 
$$\frac{\partial^2 F_n}{\partial p_{i} p_{i-1}} =2p_{i-1} - 2p_i, \frac{\partial^2 F_n}{\partial p_i^2}= 2p_{i+1} -2p_{i-1}, \mbox{ and } \frac{\partial^2 F_n}{\partial p_i \partial p_{i+1}} = 2p_i - 2p_{i+1}.$$
Since $\frac{\partial F_n}{\partial p_n} =-p_n^2 + p_{n-1}^2 - 2p_{n-1} p_n + 4p_n - 2$, we have
$$\frac{\partial^2 F_n}{\partial p_n p_{n-1}} = 2p_{n-1} - 2p_n  \mbox{ and } \frac{\partial^2 F_n}{\partial p_n^2} =  4-2p_{n-1}-2p_n.$$
Other elements are equal to 0. 

\subsection{Closed-Form Optimal Solution for the One-Dimensional Case}

\label{Appendix:Closed One}

Since the optimization model for the one-dimensional case is convex, the optimal solution could be obtained through solving a system of $n$ equations, that is, $\nabla F_n = \mathbf{0}.$ 

For the first equation, based on Equation (\ref{F_n}) and Equation (\ref{Delta}), only $F_{1}$ and $\Delta F_{2}$ involve $p_{1}.$ Based on their expressions, we have that 
$$\frac{\partial F_n}{\partial p_1} = 4p_1 - 2 + p_1^2 + 2 p_1 p_2 - p_2^2 -4p_1 + 2 = p_1^2 + 2 p_1 p_2 - p_2^2 = 0,$$
which is equivalent to $(p_1 + p_2)^2 = 2p_2^2.$ Therefore, we have $p_2 = (\sqrt 2 + 1)p_1$.

For any $2 \le i \le n-1$, based on Equation (\ref{F_n}) and Equation (\ref{Delta}), only $\Delta F_{i}$ and $\Delta F_{i+1}$ involve $p_{i}.$ Based on their expressions, we have that 
\begin{align*}
\frac{\partial F_n}{\partial p_i} & = - p_i^2 + p_{i-1}^2 - 2p_{i-1}p_i + \\ &4 p_i -2 + p_i^2 + 2p_i p_{i+1} - p_{i+1}^2 - 4 p_i + 2  \\
& =  p_{i-1}^2 - 2p_{i-1}p_i + 2p_i p_{i+1} - p_{i+1}^2 = 0,
\end{align*}
which is equivalent to $(p_{i-1}-p_{i+1})(p_{i-1}-2p_i+p_{i+1})=0.$ Therefore, we have $p_{i+1} = 2p_{i} - p_{i-1}$. Thus, we obtain that $ p_{i+1}=(i\sqrt 2+1)p_1.$ 

For the last equation, only $\Delta F_{n}$ involve $p_{n}$ based on Equation (\ref{F_n}) and Equation (\ref{Delta}). Along with its expression, we have that
\begin{equation}
\frac{\partial F_n}{\partial p_n} =-p_n^2 + p_{n-1}^2 - 2p_{n-1} p_n + 4p_n - 2 = 0. \label{nth equation}
\end{equation}

If we put $p_n=\left[(n-1)\sqrt 2+1\right]p_1$ and $p_{n-1}=\left[(n-2)\sqrt 2+1\right]p_1$ into Equation (\ref{nth equation}), we obtain the following quadratic equation for $p_1$:
\begin{equation*}
-4p_{1}(p_{1}-1)(n-1) \sqrt{2} - 2 - 4(n -1)^{2}p_{1}^{2} + 4p_{1}=0.
\end{equation*}
By solving the quadratic equation, we find the unique optimal value for $p_1$ (the other value will lead to an infeasible solution), that is, 
$$ p_1=\frac{1}{(n-1)\sqrt 2+2}. $$
Therefore,  
$$p_i^*=\frac{(i-1)\sqrt 2+1}{(n-1)\sqrt 2+2}, \quad i \in \{1,\ldots,n\}.$$

\subsection{Objective Function Expression for Two Hubs in a Square}
\label{Appendix:OBJ Expression of Two hubs}

\begin{figure}[h!]
\centering
\includegraphics[width=0.25\textwidth]{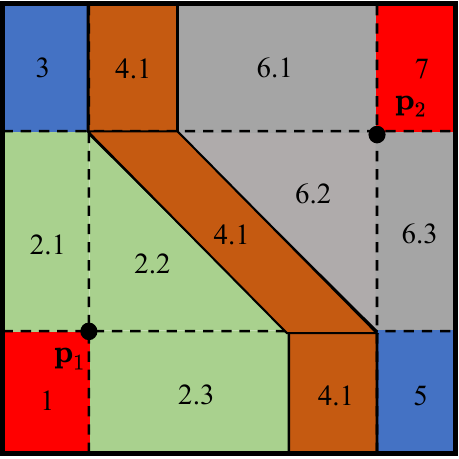}
\caption{Illustration of the Integration Area Division of $\mathbf{x}$ }
\label{CE}
\end{figure}

For the convenience of calculation, we further divide the potential area of $\mathbf{X}$ into 13 parts, as shown in Figure \ref{CE}. Moreover, as shown in Figure \ref{AP_all}, the potential area of $\mathbf{Y}$ is divided into the red area ($\mathbf{p}_1$ wins) and the blue area ($\mathbf{p}_2$ wins). $M$ and $N$ are the intersections of the separation line and the inner box. 

\begin{figure}[h!]
\centering
\includegraphics[width=0.6\textwidth]{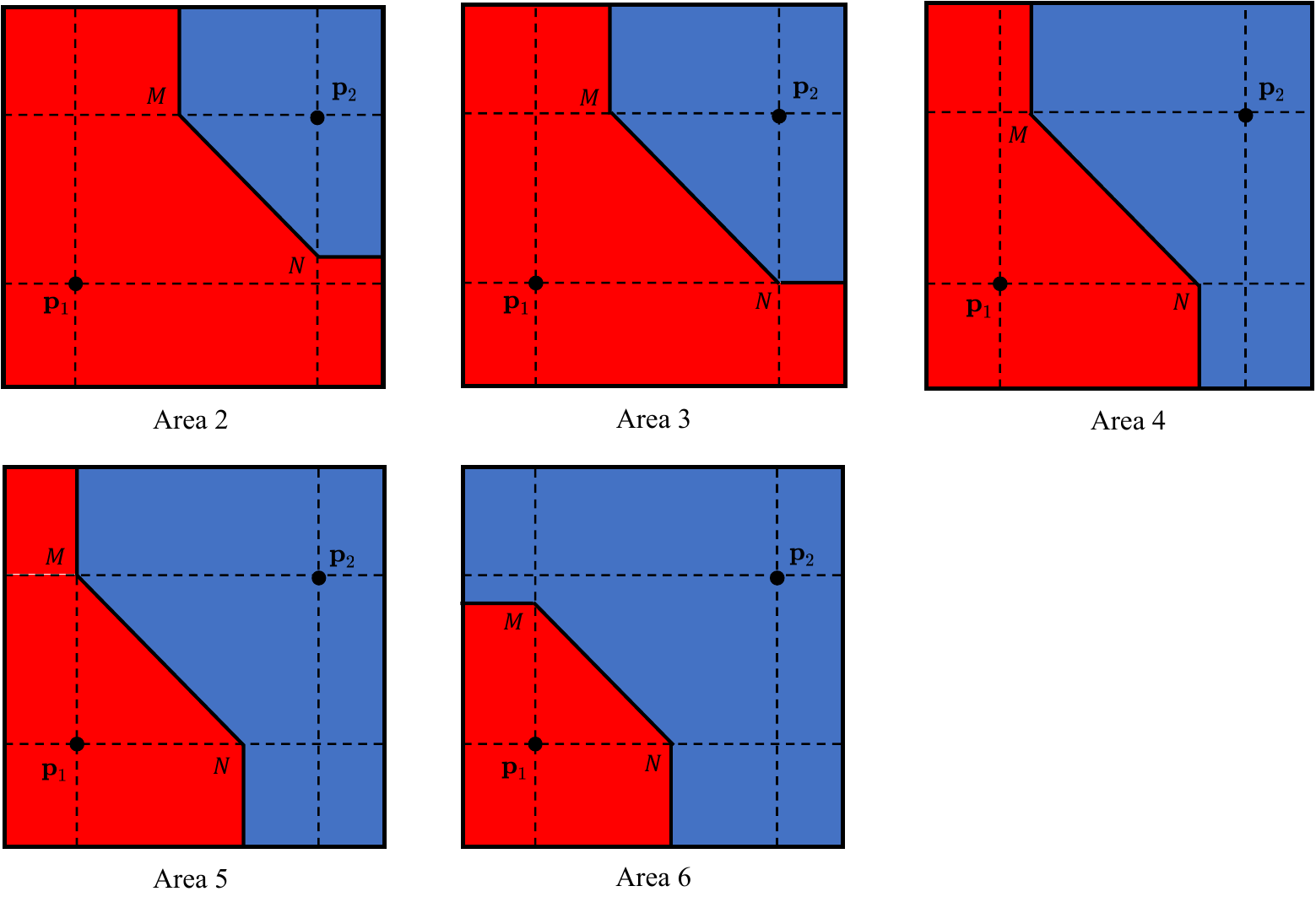}
\caption{Illustration of the Integration Area of $\mathbf{y}$ for $\mathbf{x}$ in Area 2 to Area 6}
\label{AP_all}
\end{figure}

\subsubsection{$(x_1,x_2)$ in Area 1}
According to Proposition \ref{2Dp1}, if $(x_1,x_2)$ is in Area 1, $\mathbf{p}_1$ always wins regardless of the location of $(y_1,y_2)$. Then, the corresponding integration expression can be obtained by:
\begin{equation}
\label{part1}
\int^{p_{12}}_0\int^{p_{11}}_0\int_0^1\int_0^1 d_1 \, dy_1 \, dy_2 \, dx_1 \,dx_2.
\end{equation}

\subsubsection{$(x_1,x_2)$ in Area 2}

\textbf{$(x_1,x_2)$ in Area 2.1}

When $(x_1,x_2)$ is in Area 2.1, the associated $b$ can be calculated as $b = -p_{11}+p_{12}+p_{21}+p_{22}-2x_2$. Therefore, the equation for the line segment $MN$ is $p_{12}+p_{21}+p_{22}-x_2=y_1+y_2$. Then, we obtain the locations of $M$ and $N$ as
$M=(p_{12}+p_{21}-x_2,p_{22})$ and $N=(p_{21},p_{12}+p_{22}-x_2)$. Let $A= p_{12}+p_{22}-x_2$, $ B = p_{12}+p_{21}+p_{22}-x_2-y_2$, and $ C = p_{12}+p_{21}-x_2$.

\begin{itemize}

\item Red area ($\mathbf{p}_1$ wins)
\begin{align}
\label{p1w}
&\int^{p_{22}}_{p_{12}}\int^{p_{11}}_0\int^{A}_{0}\int^{1}_0  d_1 \, dy_1 \, dy_2 \, dx_1 \, dx_2  \nonumber \\ 
&+\int^{p_{22}}_{p_{12}}\int^{p_{11}}_0\int^{p_{22}}_{A}\int^{B }_0  d_1 \, dy_1 \, dy_2 \, dx_1 \, dx_2  \nonumber \\   
&+\int^{p_{22}}_{p_{12}}\int^{p_{11}}_0\int^1_{p_{22}}\int^{C}_0 d_1 \, dy_1 \, dy_2 \, dx_1 \, dx_2  
\end{align}

\item Blue area ($\mathbf{p}_2$ wins)
\begin{align}
\label{p2w}
&\int^{p_{22}}_{p_{12}}\int^{p_{11}}_0\int^{p_{22}}_{A}\int^{1}_{B}  d_2 \, dy_1 \, dy_2 \, dx_1 \, dx_2  \nonumber \\ 
&+\int^{p_{22}}_{p_{12}}\int^{p_{11}}_0\int^1_{p_{22}}\int^{1}_{C} d_2 \, dy_1 \, dy_2 \, dx_1 \, dx_2  
\end{align}
\end{itemize}

\textbf{$(x_1,x_2)$ in Area 2.2}

The associated $b$ can be calculated as $b=p_{11}+p_{12}+p_{21}+p_{22}-2x_1-2x_2$. Therefore, the equation for the line segment $MN$ is 
$p_{11}+p_{12}+p_{21}+p_{22}-x_1-x_2=y_1+y_2$. 
Then, we obtain the locations of $M$ and $N$ as $M=(p_{11}+p_{12}+p_{21}-x_1-x_2,p_{22})$ and $N=(p_{21},p_{11}+p_{12}+p_{22}-x_1-x_2)$. Let $A= p_{11}+p_{12} +p_{22}-x_1-x_2$, $ B = p_{11}+p_{12}+p_{22}-x_1-x_2$, and $ C = p_{11}+p_{12}+p_{21}-x_1-x_2$.

\begin{itemize}

\item Red area ($\mathbf{p}_1$ wins)
\begin{align}
\label{p2.2.1}
&\int^{p_{22}}_{p_{12}}\int^{p_{11}+p_{22}-x_2}_{p_{11}}\int^{A}_{0}\int^{1}_0 d_1 \, dy_1 \, dy_2 \, dx_1 \, dx_2  \nonumber \\
&+\int^{p_{22}}_{p_{12}}\int^{p_{11}+p_{22}-x_2}_{p_{11}}\int^{p_{22}}_{A}\int^{B}_0 d_1 \, dy_1 \, dy_2 \, dx_1 \,dx_2   \nonumber \\ 
&+\int^{p_{22}}_{p_{12}}\int^{p_{11}+p_{22}-x_2}_{p_{11}}\int^1_{p_{22}}\int^{C}_0 d_1 \, dy_1 \, dy_2 \, dx_1 \, dx_2   
\end{align}

\item Blue area ($\mathbf{p}_2$ wins)
\begin{align}
\label{p2.2.2}
&\int^{p_{22}}_{p_{12}}\int^{p_{11}+p_{22}-x_2}_{p_{11}}\int^{p_{22}}_{A}\int^{1}_{B} d_2 \, dy_1 \, dy_2 \, dx_1 \, dx_2  \nonumber \\
&+\int^{p_{22}}_{p_{12}}\int^{p_{11}+p_{22}-x_2}_{p_{11}}\int^1_{p_{22}}\int^{1}_{C} d_2 \, dy_1 \, dy_2 \, dx_1 \, dx_2    
\end{align}
\end{itemize}

\textbf{$(x_1,x_2)$ in Area 2.3}

The associated $b$ can be calculated as $b=p_{11}-p_{12}+p_{21}+p_{22}-2x_1$. Therefore, the equation for the line segment $MN$ is $p_{11}+p_{21}+p_{22}-x_1=y_1+y_2$.
Then, we obtain the locations of $M$ and $N$ as $M=(p_{11}+p_{21}-x_1,p_{22})$ and $N=(p_{21},p_{11}+p_{22}-x_1)$. Let $A= p_{11}+p_{22}-x_1$, $ B = p_{11}+p_{21}+p_{22}-x_1-y_2$, and $ C = p_{11}+p_{21}-x_1$.

\begin{itemize}

\item Red area ($\mathbf{p}_1$ wins)
\begin{align}
\label{p2.3.1}
&\int^{p_{12}}_{0}\int^{p_{11}+p_{22}-p_{12}}_{p_{11}}\int^{A}_{0}\int^{1}_0 d_1 \, dy_1 \, dy_2 \, dx_1 \, dx_2 \nonumber \\ 
&+\int^{p_{12}}_{0}\int^{p_{11}+p_{22}-p_{12}}_{p_{11}}\int^{p_{22}}_{A}\int^{B}_0 d_1 \, dy_1 \, dy_2 \, dx_1 \, dx_2  \nonumber \\   
&+\int^{p_{12}}_{0}\int^{p_{11}+p_{22}-p_{12}}_{p_{11}}\int^1_{p_{22}}\int^{C}_0 d_1 \, dy_1 \, dy_2 \, dx_1 \, dx_2
\end{align}

\item Blue area ($\mathbf{p}_2$ wins)
\begin{align}
\label{p2.3.2}
&\int^{p_{12}}_{0}\int^{p_{11}+p_{22}-p_{12}}_{p_{11}}\int^{p_{22}}_{A}\int^{1}_{B} d_2 \, dy_1 \, dy_2 \, dx_1 \, dx_2 \nonumber \\
&+\int^{p_{12}}_{0}\int^{p_{11}+p_{22}-p_{12}}_{p_{11}}\int^1_{p_{22}}\int^{1}_{C} d_2 \, dy_1 \, dy_2 \, dx_1 \, dx_2
\end{align}
\end{itemize}

\subsubsection{$(x_1,x_2)$ in Area 3}

 The associated $b$ can be calculated as $b=-p_{11}+p_{12}+p_{21}-p_{22}$. Therefore, the equation for the line segment $MN$ is $p_{12}+p_{21}=y_1+y_2$. Then we obtain the locations of $M$ and $N$ as $M=(p_{12}+p_{21}-p_{22},p_{22})$ and $N=(p_{21},p_{12})$.

\begin{itemize}

\item Red area ($\mathbf{p}_1$ wins)
\begin{align}
\label{p3.1}
&\int^{1}_{p_{22}}\int^{p_{11}}_{0}\int^{p_{12}}_{0}\int^{1}_0 d_1 \, dy_1 \, dy_2 \, dx_1 \, dx_2 \nonumber \\
&+\int^{1}_{p_{22}}\int^{p_{11}}_{0}\int^{p_{22}}_{p_{12}}\int^{p_{12}+p_{21}-y_2}_0 d_1 \, dy_1 \, dy_2 \, dx_1 \, dx_2 \nonumber  \\  
&+\int^{1}_{p_{22}}\int^{p_{11}}_{0}\int^1_{p_{22}}\int^{p_{12}+p_{21}-p_{22}}_0 d_1 \, dy_1 \, dy_2 \, dx_1 \, dx_2 
\end{align}

\item Blue area ($\mathbf{p}_2$ wins)
\begin{align}
\label{p3.2}
&\int^{1}_{p_{22}}\int^{p_{11}}_{0}\int^{p_{22}}_{p_{12}}\int^{1}_{p_{12}+p_{21}-y_2} d_2 dy_1dy_2dx_1dx_2 \nonumber \\
&+\int^{1}_{p_{22}}\int^{p_{11}}_{0}\int^1_{p_{22}}\int^{1}_{p_{12}+p_{21}-p_{22}} d_2 dy_1dy_2dx_1dx_2 
\end{align}
\end{itemize}

\subsubsection{$(x_1,x_2)$ in Area 4}
\textbf{$(x_1,x_2)$ in Area 4.1}

The associated $b$ can be calculated as $b=p_{11}+p_{12}+p_{21}-p_{22}-2x_1$. Therefore, the equation for the line segment $MN$ is $p_{11}+p_{12}+p_{21}-x_1=y_1+y_2.$ Then, we obtain the locations of $M$ and $N$ as $M=(p_{11}+p_{12}+p_{21}-p_{22}-x_1,p_{22})$ and $N=(p_{11}+p_{21}-x_1,p_{12})$. Let $A= p_{11}+p_{21}-x_1$, $ B = p_{11}+p_{12}+p_{21}-x_1-y_2$, and $ C = p_{11}+p_{12}+p_{21}-p_{22}-x_1$.

\begin{itemize}

\item Red area ($\mathbf{p}_1$ wins)
\begin{align}
\label{p4.1.1}
&\int^{1}_{p_{22}}\int^{p_{12}+p_{21}-p_{22}}_{p_{11}}\int^{p_{12}}_{0}\int^{A}_0 d_1 \, dy_1 \, dy_2 \, dx_1 \, dx_2 \nonumber \\ 
&+\int^{1}_{p_{22}}\int^{p_{12}+p_{21}-p_{22}}_{p_{11}}\int^{p_{22}}_{p_{12}}\int^{B}_0 d_1 \, dy_1 \, dy_2 \, dx_1 \, dx_2 \nonumber \\   
&+\int^{1}_{p_{22}}\int^{p_{12}+p_{21}-p_{22}}_{p_{11}}\int^1_{p_{22}}\int^{C}_0 d_1 \, dy_1 \, dy_2 \, dx_1 \, dx_2 
\end{align}

\item Blue area ($\mathbf{p}_2$ wins)
\begin{align}
\label{p4.1.2}
&\int^{1}_{p_{22}}\int^{p_{12}+p_{21}-p_{22}}_{p_{11}}\int^{p_{12}}_{0}\int^{1}_{A} d_2 \, dy_1 \, dy_2 \, dx_1 \, dx_2 \nonumber \\ 
&+\int^{1}_{p_{22}}\int^{p_{12}+p_{21}-p_{22}}_{p_{11}}\int^{p_{22}}_{p_{12}}\int^{1}_{B} d_2 \, dy_1 \, dy_2 \, dx_1 \, dx_2 \nonumber  \\  
&+\int^{1}_{p_{22}}\int^{p_{12}+p_{21}-p_{22}}_{p_{11}}\int^1_{p_{22}}\int^{1}_{C} d_2 \, dy_1 \, dy_2 \, dx_1 \, dx_2 
\end{align}
\end{itemize}

\textbf{$(x_1,x_2)$ in Area 4.2}

The associated $b$ can be calculated as $b=p_{11}+p_{12}+p_{21}+p_{22}-2x_1-2x_2$. Therefore, the equation for the line segment $MN$ is $p_{11}+p_{12}+p_{21}+p_{22}-x_1-x_2=y_1+y_2$. 
Then, we obtain the locations of $M$ and $N$ as $M=(p_{11}+p_{12}+p_{21}-x_1-x_2,p_{22})$ and $N=(p_{11}+p_{21}+p_{22}-x_1-x_2,p_{12})$. Let $A= p_{11}+p_{21}+p_{22}-x_1-x_2$, $ B = p_{11}+p_{12}+p_{21}+p_{22}-x_1-x_2-y_2$, and $ C = p_{11}+p_{12}+p_{21}-x_1-x_2$.

\begin{itemize}

\item Red area ($\mathbf{p}_1$ wins)
\begin{align}
\label{p4.2.1}
&\int^{p_{22}}_{p_{12}}\int_{p_{11}+p_{22}-x_2}^{p_{12}+p_{21}-x_2}\int^{p_{12}}_{0}\int^{A}_0 d_1 \, dy_1 \, dy_2 \, dx_1 \, dx_2  \nonumber \\ 
&+\int^{p_{22}}_{p_{12}}\int_{p_{11}+p_{22}-x_2}^{p_{12}+p_{21}-x_2}\int^{p_{22}}_{p_{12}}\int^{B}_0 d_1 \, dy_1 \, dy_2 \, dx_1 \, dx_2  \nonumber \\ 
&+\int^{p_{22}}_{p_{12}}\int_{p_{11}+p_{22}-x_2}^{p_{12}+p_{21}-x_2}\int^1_{p_{22}}\int^{C}_0 d_1 \, dy_1 \, dy_2 \, dx_1 \, dx_2 
\end{align}

\item Blue area ($\mathbf{p}_2$ wins)
\begin{align}
\label{p4.2.2}
&\int^{p_{22}}_{p_{12}}\int_{p_{11}+p_{22}-x_2}^{p_{12}+p_{21}-x_2}\int^{p_{12}}_{0}\int^{1}_{A} d_2 \, dy_1 \, dy_2 \, dx_1 \, dx_2  \nonumber \\
&+\int^{p_{22}}_{p_{12}}\int_{p_{11}+p_{22}-x_2}^{p_{12}+p_{21}-x_2}\int^{p_{22}}_{p_{12}}\int^{1}_{B} d_2 \, dy_1 \, dy_2 \, dx_1 \,dx_2 \nonumber  \\ 
&+\int^{p_{22}}_{p_{12}}\int_{p_{11}+p_{22}-x_2}^{p_{12}+p_{21}-x_2}\int^1_{p_{22}}\int^{1}_{C} d_2 \, dy_1 \, dy_2 \, dx_1 \, dx_2 
\end{align}
\end{itemize}

\textbf{$(x_1,x_2)$ in Area 4.3}

The associated $b$ can be calculated as $b=p_{11}-p_{12}+p_{21}+p_{22}-2x_1$. Therefore, the equation for the line segment $MN$ is $p_{11}+p_{21}+p_{22}-x_1=y_1+y_2$.
Then, we obtain the locations of $M$ and $N$ as $M=(p_{11}+p_{21}-x_1,p_{22})$ and $N=(p_{11}+p_{21}+p_{22}-p_{12}-x_1,p_{12})$. Let $A= p_{11}+p_{21}+p_{22}-p_{12}-x_1$, $ B = p_{11}+p_{21}+p_{22}-x_1-y_2$, and $ C = p_{11}+p_{21}-x_1$.

\begin{itemize}

\item Red area ($\mathbf{p}_1$ wins)
\begin{align}
\label{p4.3.1}
&\int^{p_{12}}_{0}\int^{p_{21}}_{p_{11}+p_{22}-p_{12}}\int^{p_{12}}_{0}\int^{A}_0 d_1 \, dy_1 \, dy_2 \, dx_1 \, dx_2 \nonumber \\
&+\int^{p_{12}}_{0}\int^{p_{21}}_{p_{11}+p_{22}-p_{12}}\int^{p_{22}}_{p_{12}}\int^{B}_0 d_1 \, dy_1 \, dy_2 \, dx_1 \, dx_2 \nonumber \\ 
&+\int^{p_{12}}_{0}\int^{p_{21}}_{p_{11}+p_{22}-p_{12}}\int^1_{p_{22}}\int^{C}_0 d_1 \, dy_1 \, dy_2 \, dx_1 \, dx_2  
\end{align}

\item Blue area ($\mathbf{p}_2$ wins)
\begin{align}
\label{p4.3.2}
&\int^{p_{12}}_{0}\int^{p_{21}}_{p_{11}+p_{22}-p_{12}}\int^{p_{12}}_{0}\int^{1}_{A} d_2 \, dy_1 \, dy_2 \, dx_1 \,dx_2 \nonumber \\ 
&+\int^{p_{12}}_{0}\int^{p_{21}}_{p_{11}+p_{22}-p_{12}}\int^{p_{22}}_{p_{12}}\int^{1}_{B} d_2 \, dy_1 \, dy_2 \, dx_1 \, dx_2 \nonumber \\  
&+\int^{p_{12}}_{0}\int^{p_{21}}_{p_{11}+p_{22}-p_{12}}\int^1_{p_{22}}\int^{1}_{C} d_2 \, dy_1 \, dy_2 \, dx_1 \, dx_2 
\end{align}
\end{itemize}

\subsubsection{$(x_1,x_2)$ in Area 5}

The associated $b$ can be calculated as $b=p_{11}-p_{12}-p_{21}+p_{22}$. Therefore, the equation for the line segment $MN$ is
$p_{11}+p_{22}=y_1+y_2$. Then, we obtain the locations of $M$ and $N$ as $M=(p_{11},p_{22})$ and $N=(p_{11}+p_{22}-p_{12},p_{12})$.

\begin{itemize}

\item Red area ($\mathbf{p}_1$ wins)
\begin{align}
\label{p5.1}
&\int^{p_{12}}_{0}\int^{1}_{p_{21}}\int^{p_{12}}_{0}\int^{p_{11}+p_{22}-p_{12}}_0 d_1 \, dy_1 \, dy_2 \, dx_1 \, dx_2 \nonumber \\ 
&+\int^{p_{12}}_{0}\int^{1}_{p_{21}}\int^{p_{22}}_{p_{12}}\int^{p_{11}+p_{22}-y_2}_0 d_1 \, dy_1 \, dy_2 \, dx_1 \, dx_2  \nonumber \\ 
&+\int^{p_{12}}_{0}\int^{1}_{p_{21}}\int^1_{p_{22}}\int^{p_{11}}_0 d_1 \, dy_1 \, dy_2 \, dx_1 \, dx_2                 
\end{align}

\item Blue area ($\mathbf{p}_2$ wins)
\begin{align}
\label{p5.2}
&\int^{p_{12}}_{0}\int^{1}_{p_{21}}\int^{p_{12}}_{0}\int^{1}_{p_{11}+p_{22}-p_{12}} d_2 \, dy_1 \, dy_2 \, dx_1 \, dx_2 \nonumber \\ 
&+\int^{p_{12}}_{0}\int^{1}_{p_{21}}\int^{p_{22}}_{p_{12}}\int^{1}_{p_{11}+p_{22}-y_2} d_2 \, dy_1 \, dy_2 \, dx_1 \, dx_2 \nonumber \\         
&+\int^{p_{12}}_{0}\int^{1}_{p_{21}}\int^1_{p_{22}}\int^{1}_{p_{11}} d_2 \, dy_1 \, dy_2 \, dx_1 \, dx_2               
\end{align}
\end{itemize}

\subsubsection{$(x_1,x_2)$ in Area 6}

\textbf{$(x_1,x_2)$ in Area 6.1}

The associated $b$ can be calculated as $b=p_{11}+p_{12}+p_{21}-p_{22}-2x_1$. Therefore, the equation for the line segment $MN$ is
$p_{11}+p_{12}+p_{21}-x_1=y_1+y_2$.
Then, we obtain the locations of $M$ and $N$ as $M=(p_{11},p_{12}+p_{21}-x_1)$ and $N=(p_{11}+p_{21}-x_1,p_{12})$. Let $A= p_{11}+p_{21}-x_1$, $ B = p_{12}+p_{21}-x_1$, and $ C = p_{11}+p_{12}+p_{21}-x_1-y_2$.

\begin{itemize}

\item Red area ($\mathbf{p}_1$ wins)
\begin{align}
\label{p6.1.1}
&\int^{1}_{p_{22}}\int^{p_{21}}_{p_{12}+p_{21}-p_{22}}\int^{p_{12}}_{0}\int^{A}_0 d_1 \, dy_1 \, dy_2 \, dx_1 \,dx_2 \nonumber \\ 
&+\int^{1}_{p_{22}}\int^{p_{21}}_{p_{12}+p_{21}-p_{22}}\int^{B}_{p_{12}}\int^{C}_0 d_1 \, dy_1 \, dy_2 \, dx_1 \, dx_2 
\end{align}

\item Blue area ($\mathbf{p}_2$ wins)
\begin{align}
\label{p6.1.2}
&\int^{1}_{p_{22}}\int^{p_{21}}_{p_{12}+p_{21}-p_{22}}\int^{p_{12}}_{0}\int^{1}_{A} d_2 \, dy_1 \, dy_2 \, dx_1 \,dx_2 \nonumber \\
&+\int^{1}_{p_{22}}\int^{p_{21}}_{p_{12}+p_{21}-p_{22}}\int^{B}_{p_{12}}\int^{1}_{C} d_2 \, dy_1 \, dy_2 \,dx_1 \, dx_2 \nonumber  \\ 
&+\int^{1}_{p_{22}}\int^{p_{21}}_{p_{12}+p_{21}-p_{22}}\int^1_{B}\int^{1}_{0} d_2 \, dy_1 \, dy_2 \, dx_1 \, dx_2  
\end{align}
\end{itemize}

\textbf{$(x_1,x_2)$ in Area 6.2}

The associated $b$ can be calculated as $b=p_{11}+p_{12}+p_{21}+p_{22}-2x_1-2x_2$. Therefore, the equation for the line segment $MN$ is $p_{11}+p_{12}+p_{21}+p_{22}-x_1-x_2=y_1+y_2$.
Then, we obtain the locations of $M$ and $N$ as $M=(p_{11},p_{12}+p_{21}+p_{22}-x_1-x_2)$ and $N=(p_{11}+p_{21}+p_{22}-x_1-x_2,p_{12})$. Let $A= p_{11}+p_{21}+p_{22}-x_1-x_2$, $ B = p_{12}+p_{21}+p_{22}-x_1-x_2$, and $ C = p_{11}+p_{12}+p_{21}+p_{22}-x_1-x_2-y_2$.

\begin{itemize}

\item Red area ($\mathbf{p}_1$ wins)
\begin{align}
\label{p6.2.1}
&\int^{p_{22}}_{p_{12}}\int^{p_{21}}_{p_{12}+p_{21}-x_2}\int^{p_{12}}_{0}\int^{A}_0 d_1 \, dy_1 \, dy_2 \, dx_1 \, dx_2 \nonumber \\
&+\int^{p_{22}}_{p_{12}}\int^{p_{21}}_{p_{12}+p_{21}-x_2}\int^{B}_{p_{12}}\int^{C}_0 d_1 \, dy_1 \, dy_2 \, dx_1 \, dx_2 
\end{align}

\item Blue area ($\mathbf{p}_2$ wins)
\begin{align}
\label{p6.2.2}
&\int^{p_{22}}_{p_{12}}\int^{p_{21}}_{p_{12}+p_{21}-x_2}\int^{p_{12}}_{0}\int^{1}_{A} d_2 \, dy_1 \, dy_2 \, dx_1 \, dx_2 \nonumber \\ 
&+\int^{p_{22}}_{p_{12}}\int^{p_{21}}_{p_{12}+p_{21}-x_2}\int^{B}_{p_{12}}\int^{1}_{C} d_2 \, dy_1 \, dy_2 \, dx_1 \, dx_2  \nonumber \\  
&+\int^{p_{22}}_{p_{12}}\int^{p_{21}}_{p_{12}+p_{21}-x_2}\int^1_{B}\int^{1}_{0} d_2 \, dy_1 \, dy_2 \, dx_1 \, dx_2 
\end{align}
\end{itemize}

\textbf{$(x_1,x_2)$ in Area 6.3}

The associated $b$ can be calculated as $b=p_{11}+p_{12}-p_{21}+p_{22}-2x_2$. Therefore, the equation for the line segment $MN$ is $p_{11}+p_{12}+p_{22}-x_2=y_1+y_2$.
Then, we obtain the locations of $M$ and $N$ as $M=(p_{11},p_{12}+p_{22}-x_2)$ and $N=(p_{11}+p_{22}-x_2,p_{12})$. Let $A= p_{11}+p_{22}-x_2$, $ B = p_{12}+p_{22}-x_2$, and $ C = p_{11}+p_{12}+p_{22}-x_2-y_2$.

\begin{itemize}

\item Red area ($\mathbf{p}_1$ wins)
\begin{align}
\label{p6.3.1}
&\int^{p_{22}}_{p_{12}}\int^{1}_{p_{21}}\int^{p_{12}}_{0}\int^{A}_0 d_1 \, dy_1 \, dy_2 \, dx_1 \, dx_2 \nonumber \\ 
&+\int^{p_{22}}_{p_{12}}\int^{1}_{p_{21}}\int^{B}_{p_{12}}\int^{C}_0 d_1 \, dy_1 \, dy_2 \, dx_1 \, dx_2        
\end{align}

\item Blue area ($\mathbf{p}_2$ wins)
\begin{align}
\label{p6.3.2}
&\int^{p_{22}}_{p_{12}}\int^{1}_{p_{21}}\int^{p_{12}}_{0}\int^{1}_{A} d_2 \, dy_1 \, dy_2 \, dx_1 \, dx_2 \nonumber \\
&+\int^{p_{22}}_{p_{12}}\int^{1}_{p_{21}}\int^{B}_{p_{12}}\int^{1}_{C} d_2 \, dy_1 \, dy_2 \, dx_1 \, dx_2 \nonumber \\        
&+\int^{p_{22}}_{p_{12}}\int^{1}_{p_{21}}\int^1_{B}\int^{1}_{0} d_2 \, dy_1 \, dy_2 \, dx_1 \, dx_2         
\end{align}
\end{itemize}

\subsubsection{$(x_1,x_2)$ in Area 7}

According to Proposition \ref{2Dp2}, when $X$ is in Part 7, $d_2 = \min \{d_1, d_2\}$wherever $Y$ is. Then, the expression of Part 7 can be obtained by:
\begin{equation}
\label{part7}
\int^{1}_{p_{22}}\int^{1}_{p_{21}}\int_{0}^1\int_0^1 d_2 \, dy_1 \, dy_2 \, dx_1 \,dx_2.
\end{equation}

Therefore, the final expression can be obtained by summing up from Equation (\ref{part1}) to Equation (\ref{part7}).

\subsection{ Analytic Expression of $F_2$}
\label{Appendix:F_2}

\begin{align*}
F_2=&2-2p_{22}-2p_{21}-\frac{2}{3} p_{12}^{3} p_{11} p_{22}+\frac{1}{3} p_{11} p_{12}^{4}\\&-\frac{1}{3} p_{22}^{4} p_{11}-p_{11}^{2} p_{22}+\frac{2}{3} p_{12}^{3} p_{21}-p_{11}^{2} p_{12}^{2}\\
&+p_{11}^{2} p_{22}^{2}+\frac{1}{3} p_{12}^{4} p_{22}-\frac{1}{3} p_{21} p_{22}^{4}-\frac{1}{3} p_{12} p_{22}^{4}\\&+p_{11}^{2} p_{21}-p_{11} p_{21}^{2}+2 p_{12} p_{11} p_{22}\\
&+\frac{2}{3} p_{22}^{3} p_{11} p_{12}-p_{21} p_{22}^{2}+p_{21}^{2} p_{22}\\&+\frac{2}{3} p_{11} p_{22}^{3}-p_{21}^{2} p_{22}^{2}+\frac{4}{3} p_{21} p_{22}^{3}-p_{11} p_{22}^{2}\\
&+p_{12}^{2} p_{21}^{2}+p_{12}^{2} p_{22}-\frac{1}{3} p_{21}^{3}+2 p_{22}^{2}-2 p_{11} p_{12}^{2} p_{21} p_{22}\\&+\frac{2}{3} p_{22}^{3} p_{12}+\frac{2}{3} p_{22}^{3} p_{12} p_{21}-p_{21} p_{12}^{2}\\
&+\frac{4}{3} p_{12}^{3} p_{11}-\frac{2}{3} p_{12}^{3} p_{11} p_{21}-\frac{2}{3} p_{12}^{3} p_{21} p_{22}\\&-2 p_{12} p_{21} p_{22}^{2}-4 p_{11} p_{12} p_{22}^{2}\\
&+\frac{1}{3} p_{12}^{4} p_{21}+\frac{2}{3} p_{12}^{3} p_{22}-p_{12} p_{22}^{2}-\frac{2}{3} p_{12}^{4}-p_{21}^{2} p_{12}\\&+2 p_{21}^{2}+\frac{1}{3} p_{22}^{3}-\frac{1}{3} p_{12}^{3}-p_{12}^{2} p_{11}+p_{11}^{2} p_{12}\\
&+2 p_{11} p_{12} p_{21} p_{22}^{2}-\frac{2}{3} p_{22}^{4}+\frac{1}{5} p_{22}^{5}\\&-\frac{1}{5} p_{12}^{5}+\frac{1}{3} p_{11}^{3}-2 p_{12} p_{11} p_{21}+\frac{2}{3} p_{11} p_{21} p_{22}^{3}\\
&-2 p_{11} p_{21} p_{22}^{2}+2 p_{11} p_{21} p_{22}+2 p_{12} p_{21} p_{22}\\&+2 p_{11} p_{12}^{2} p_{21}+2 p_{11} p_{12}^{2} p_{22}.
\end{align*}

\subsection{Two Hubs in a Non-Square Rectangle}
\label{Appendix:two hub rectangle}

Here, we focus on a non-square rectangle with width one and positive length $a \neq 1$. Similar to the square situation, we start by assuming that $\mathbf{p}_2$ is on the northeast side of $\mathbf{p}_1$. As illustrated in Figure \ref{2p8}, there are two cases to be considered. In the first case, $\Delta_1 \geq \Delta_2$, while in the second case, $\Delta_1 < \Delta_2$. Since we do not know which case will result in the optimal two-hub location solution, we use a similar derivation procedure used in the square situation to obtain the analytic expressions for the objective functions of both cases. Moreover, through comparing the optimal solutions of the two cases, which are obtained by solving the corresponding nonlinear programs with BARON, we obtain the optimal solution for our non-square rectangle situation.     

\begin{figure}[htbp]
    \centering
    \includegraphics[width=0.6\textwidth]{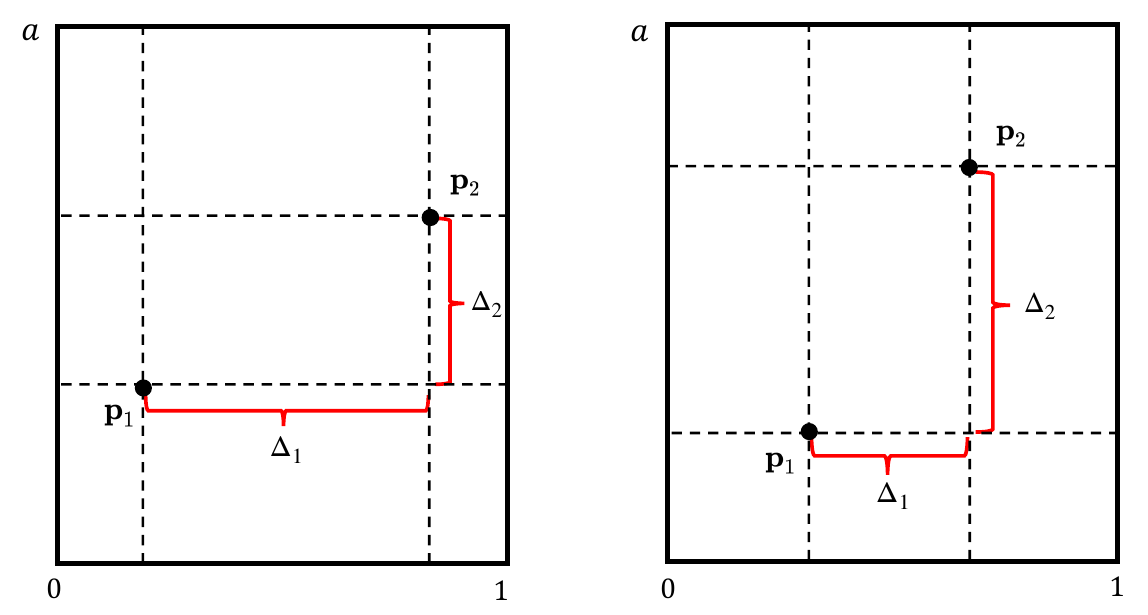}
    \caption{Case 1 (left): $\Delta_1 \ge \Delta_2$ and Case 2 (right): $\Delta_1 < \Delta_2$}
    \label{2p8}
\end{figure}

The objective function expression for Case 1 is
\begin{align*}
    F_2^{C1} = & \frac{1}{a^2}\bigg(\frac{2}{3} a p_{11} p_{12}^{3}+\frac{4}{3} a p_{11} p_{22}^{3}+\frac{2}{3} p_{11} p_{12} p_{22}^{3}\\
    &+\frac{2}{3} p_{11} p_{21} p_{22}^{3}-\frac{2}{3} p_{11} p_{12}^{3} p_{21}-2 p_{11} p_{12} p_{22}^{2}\\
    &+2 p_{11} p_{12}^{2} p_{22}+\frac{2}{3} p_{12}^{3} p_{11}-\frac{1}{3} p_{11} p_{22}^{4}+\frac{1}{3} p_{11} p_{12}^{4}\\
    &-\frac{2}{3} p_{22}^{3} p_{11}+\frac{2}{3} p_{12}^{3} p_{22}+p_{12}^{2} p_{22}^{2}+2 a p_{22}^{2}\\
    &-\frac{2}{3} p_{12} p_{22}^{3}+\frac{1}{3} p_{21} p_{12}^{4}+\frac{1}{3} p_{12}^{4} p_{22}-\frac{1}{3} p_{21} p_{22}^{4}\\
    &+\frac{1}{6} a p_{12}^{4}-\frac{1}{2} a p_{22}^{4}-\frac{1}{3} p_{12} p_{22}^{4}+\frac{1}{3} a^{2} p_{11}^{3}+2 a^{2} p_{21}^{2}\\
    &-\frac{1}{3} a^{2} p_{21}^{3}-2 a^{2} p_{21}-2 a^{2} p_{22}-\frac{1}{3} a^{2} p_{12}^{3}+\frac{1}{3} a^{2} p_{22}^{3}\\
    &-\frac{5}{6} p_{12}^{4}+\frac{1}{5} p_{22}^{5}-\frac{1}{5} p_{12}^{5}-\frac{1}{6} p_{22}^{4}+a^{3}+a^{2}\\
    &+2 p_{11} p_{12} p_{21} p_{22}^{2}-2 a p_{11} p_{12} p_{22}^{2}+2 a p_{11} p_{12}^{2} p_{21}\\
    &-2 a p_{11} p_{21} p_{22}^{2}-2 a p_{12} p_{21} p_{22}^{2}-2 p_{11} p_{12}^{2} p_{21} p_{22}\\
    &+2 a^{2} p_{11} p_{12} p_{22}+2 a^{2} p_{11} p_{21} p_{22}-2 a^{2} p_{11} p_{12} p_{21}\\
    &+2 a^{2} p_{12} p_{21} p_{22}-a p_{12}^{2} p_{22}^{2}-a p_{11}^{2} p_{12}^{2}+a p_{11}^{2} p_{22}^{2}\\
    &+\frac{2}{3} a p_{12}^{3} p_{21}+\frac{4}{3} p_{21} a p_{22}^{3}+\frac{2}{3} p_{21} p_{12} p_{22}^{3}+\frac{4}{3} p_{12} a p_{22}^{3}\\
    &-\frac{2}{3} p_{21} p_{22} p_{12}^{3}-\frac{2}{3} p_{12}^{3} p_{11} p_{22}-a^{2} p_{11} p_{22}^{2}\\
    &-a^{2} p_{11} p_{12}^{2}-a^{2} p_{11} p_{21}^{2}-a p_{21}^{2} p_{22}^{2}-a^{2} p_{12}^{2} p_{21}\\
    &+a^{2} p_{12}^{2} p_{22}-a^{2} p_{12} p_{22}^{2}-a^{2} p_{21} p_{22}^{2}\\
    &+a^{2} p_{11}^{2} p_{12}+a^{2} p_{11}^{2} p_{21}-a^{2} p_{11}^{2} p_{22}\\
    &+a p_{12}^{2} p_{21}^{2}-a^{2} p_{12} p_{21}^{2}+a^{2} p_{21}^{2} p_{22}\bigg).
\end{align*}

The objective function expression for Case 2 is
\begin{align*}
    F_2^{C2} = & \frac{1}{a^2}\bigg(\frac{2}{3} p_{11}^{3} p_{12} a +\frac{2}{3} p_{11} p_{21}^{3} p_{22}-2 p_{11} p_{21}^{2} p_{22}\\
    &+\frac{2}{3} a p_{11}^{3} p_{21}+a p_{11}^{2} p_{21}^{2}-\frac{2}{3} a p_{11} p_{21}^{3}+\frac{1}{3} p_{11}^{4} p_{22}\\
    &-\frac{1}{3} p_{11} p_{21}^{4}-p_{11}^{2} p_{21}^{2}+\frac{4}{3} p_{11} p_{21}^{3}-\frac{5}{6} a p_{11}^{4}\\
    &+\frac{2}{3} p_{11}^{3} p_{22}-p_{11}^{2} p_{12}^{2}+\frac{2}{3} p_{11}^{3} p_{12}+\frac{4}{3} p_{21}^{3} p_{22}+2 a^{2} p_{21}^{2}\\
    &-\frac{1}{3} p_{21}^{4} p_{22}+p_{21}^{2} p_{12}^{2}-\frac{1}{6} a p_{21}^{4}+\frac{1}{3} p_{11}^{4} p_{21}+\frac{4}{3} p_{12} p_{21}^{3}\\
    &-\frac{1}{3} p_{21}^{4} p_{12}+\frac{1}{3} p_{11}^{4} p_{12}-p_{21}^{2} p_{22}^{2}-p_{11}^{2} p_{22}+p_{12}^{2} p_{22}\\
    &-p_{12} p_{22}^{2}+p_{11}^{2} p_{22}^{2}-2 a^{2} p_{21}-2 a^{2} p_{22}+2 a p_{22}^{2}\\
    &-p_{12}^{2} p_{21}+p_{11}^{2} p_{21}-p_{11} p_{21}^{2}-p_{21}^{2} p_{22}+p_{11} p_{12}^{2}\\
    &-p_{21}^{2} p_{12}-p_{11}^{2} p_{12}+p_{21} p_{22}^{2}-p_{11} p_{22}^{2}+\frac{1}{6} p_{11}^{4}\\
    &-\frac{1}{5} p_{11}^{5}-\frac{1}{2} p_{21}^{4}+\frac{1}{5} p_{21}^{5}-\frac{1}{3} p_{11}^{3}-\frac{1}{3} p_{22}^{3}+\frac{1}{3} p_{21}^{3}\\
    &+\frac{1}{3} p_{12}^{3}+a^{3}+a^{2}+2 a p_{11}^{2} p_{12} p_{21}-2 a p_{11} p_{12} p_{21}^{2}\\
    &+2 p_{11} p_{12} p_{21}^{2} p_{22}-2 p_{11}^{2} p_{12} p_{21} p_{22}-\frac{2}{3} p_{21} p_{22} p_{11}^{3}\\
    &+\frac{2}{3} p_{21}^{3} p_{11} p_{12}-\frac{2}{3} a p_{12} p_{21}^{3}-\frac{2}{3} p_{11}^{3} p_{12} p_{22} \\
    &+\frac{2}{3} p_{12} p_{21}^{3} p_{22}+2 p_{11}^{2} p_{12} p_{22}-2 p_{12} p_{21}^{2} p_{22}\\
    &-2 p_{21}^{2} p_{11} p_{12}-\frac{2}{3} p_{11}^{3} p_{12} p_{21}+2 p_{22} p_{11} p_{21}\\
    &+2 p_{12} p_{21} p_{22} +2 p_{12} p_{11} p_{21}-2 p_{11} p_{22} p_{12}\bigg).
\end{align*}

For different values of $a$, Table \ref{T1} shows the associated optimal objective function values for both Case 1 and Case 2. When comparing these two values, we observe that when $a= 1/5$, $1/4$, $1/3$, and $1/2$, Case 1 (that is, $\Delta_1 \geq \Delta_2$) will result in a smaller objective function value, while when $a= 2$, 3, 4, and 5, Case 2 (that is, $\Delta_1 < \Delta_2$) will result in a smaller value. We conjecture that Case 1 will result in an optimal solution when $a<1$ and Case 2 will result in an optimal solution when $a>1$.

\begin{table}[h!]
\centering
\caption{Comparison Results for Different $a$ Values}
\label{T1}
\begin{tabular}{@{}lcccccccc@{}}
\toprule
$a$      & 1/5       & 1/4       & 1/3       & 1/2       & 2         & 3         & 4         & 5         \\ 
\midrule
Case 1   & \bf{0.4896} & \bf{0.5143} & \bf{0.5554} & \bf{0.6371} & 1.3338    & 1.8125    & 2.2996    & 2.7909    \\
Case 2   & 0.5582    & 0.5749    & 0.6041    & 0.6669    & \bf{1.2742} & \bf{1.6662} & \bf{2.0573} & \bf{2.4481} \\ 
\bottomrule
\end{tabular}
\end{table}

For any given $a$, if $\mathbf{p}_2$ is in the northwest side of $\mathbf{p}_1$, we obtain another optimal solution pair. For different $a$ values, optimal two-hub locations are summarized in Table \ref{T2}. We observe that for each optimal solution pair, $\mathbf{p}_1^*$ and $\mathbf{p}_2^*$ are symmetric with respect to $(1/2, a/2)$, which is the center of the rectangle.

\begin{table}[h!]
\centering
\caption{Optimal Hub Locations for Different $a$ Values}
\label{T2}
\begin{tabular}{@{}lcccccccccc@{}}
\toprule
$a$  & $p_{11}^*$ & $p_{12}^*$ & $p_{21}^*$ & $p_{22}^*$ & $a$  & $p_{11}^*$ & $p_{12}^*$ & $p_{21}^*$ & $p_{22}^*$ \\ 
\midrule
\multirow{2}{*}{$1/5$} & 0.2951     & 0.0895     & 0.7049     & 0.1105     & \multirow{2}{*}{2} & 0.4250     & 0.6017     & 0.5750     & 1.3983     \\
                       & 0.2951     & 0.1105     & 0.7049     & 0.0895     &                  & 0.4250     & 1.3983     & 0.5750     & 0.6017     \\
\multirow{2}{*}{$1/4$} & 0.2958     & 0.1112     & 0.7042     & 0.1388     & \multirow{2}{*}{3} & 0.4390     & 0.8915     & 0.5610     & 2.1085     \\
                       & 0.2958     & 0.1388     & 0.7042     & 0.1112     &                  & 0.4390     & 2.1085     & 0.5610     & 0.8915     \\
\multirow{2}{*}{$1/3$} & 0.2972     & 0.1463     & 0.7028     & 0.1870     & \multirow{2}{*}{4} & 0.4447     & 1.1831     & 0.5553     & 2.8169     \\
                       & 0.2972     & 0.1870     & 0.7028     & 0.1463     &                  & 0.4447     & 2.8169     & 0.5553     & 1.1831     \\
\multirow{2}{*}{$1/2$} & 0.3009     & 0.2125     & 0.6991     & 0.2875     & \multirow{2}{*}{5} & 0.4477     & 1.4754     & 0.5523     & 3.5246     \\
                       & 0.3009     & 0.2875     & 0.6991     & 0.2125     &                  & 0.4477     & 3.5246     & 0.5523     & 1.4754     \\ 
\bottomrule
\end{tabular}
\end{table}



\end{document}